\newtheorem{theorem}{\sc Theorem.}[section]
\newtheorem{lemma}[theorem]{\sc Lemma.}
\newtheorem{remark}[theorem]{\sc Remark.}
\newtheorem{example}[theorem]{\sc Example.}
\renewcommand{\theequation}{\arabic{section}.\arabic{equation}}
\newenvironment{AMS}%
{{\upshape\bfseries AMS subject classifications. }\ignorespaces}{}
\newenvironment{keywords}{{\upshape\bfseries Key words. }\ignorespaces}{}
\newcommand{\bRplus}{{\mathbb R}_{>0}}
\newcommand{\bRgeq}{{\mathbb R}_{\geq 0}}
\newcommand{\RZ}{{\mathbb R} \slash {\mathbb Z}}
\newcommand{\bR}{{\mathbb R}}
\newcommand{\bC}{{\mathbb C}}
\newcommand{\bS}{{\mathbb S}}
\newcommand{\bN}{{\mathbb N}}
\newcommand{\altm}{a}
\newcommand{\altg}{{\mathfrak g}}
\newcommand{\altbeta}{\widehat\beta}
\newcommand{\tr}{\operatorname{tr}}
\renewcommand{\div}{\operatorname{div}}
\newcommand{\diag}{\operatorname{diag}}
\newcommand{\ratio}{{\mathfrak r}}
\newcommand{\dH}[1]{\;{\rm d}{\mathcal{H}}^{#1}} 
\newcommand{\drho}{\;{\rm d}\rho}
\newcommand{\dt}{\;{\rm d}t}
\newcommand{\ds}{\;{\rm d}s}
\newcommand{\dtheta}{\;{\rm d}\theta}
\newcommand{\Vh}{\underline{V}^h}
\newcommand{\Id}{{\rm Id}}
\newcommand{\dd}[1]{\frac{\rm d}{{\rm d}#1}}
\newcommand{\ddt}{\dd{t}}
\newcommand{\Dds}{\frac{D}{{\rm d}\tilde s}}
\def\epsilon{\varepsilon} 
\def\hat{\widehat}
\begin{document}
\title{
A novel finite element approximation of anisotropic curve shortening flow}
\author{Klaus Deckelnick\footnotemark[2]\ \and 
        Robert N\"urnberg\footnotemark[3]}

\renewcommand{\thefootnote}{\fnsymbol{footnote}}
\footnotetext[2]{Institut f\"ur Analysis und Numerik,
Otto-von-Guericke-Universit\"at Magdeburg, 39106 Magdeburg, Germany \\
{\tt klaus.deckelnick@ovgu.de}}
\footnotetext[3]{Dipartimento di Mathematica, Universit\`a di Trento,
38123 Trento, Italy \\ {\tt robert.nurnberg@unitn.it}}

\date{}

\maketitle

\begin{abstract}
We extend the DeTurck trick from the classical isotropic curve shortening flow
to the anisotropic setting. Here the anisotropic
energy density is allowed to depend on space, which allows an interpretation
in the context of Finsler metrics,
giving rise to e.g.\ geodesic curvature flow in Riemannian manifolds.
Assuming that the density is strictly convex and smooth, we introduce a novel
weak formulation for anisotropic curve shortening flow.
We then derive an optimal $H^1$--error bound for a 
continuous-in-time semidiscrete finite element approximation that uses
piecewise linear elements.
In addition, we consider some fully practical fully discrete schemes 
and prove their unconditional stability.
Finally, we present several numerical simulations, including some
convergence experiments that confirm the derived error bound,
as well as applications to crystalline curvature flow and
geodesic curvature flow.
\end{abstract} 

\begin{keywords} 
anisotropic curve shortening flow, anisotropic curvature,
Riemannian manifold, geodesic curvature flow, 
finite element method, error analysis, stability,
crystalline curvature flow
\end{keywords}

\begin{AMS} 
65M60, 
65M12, 
65M15, 
53E10, 
53C60, 
35K15  
\end{AMS}
\renewcommand{\thefootnote}{\arabic{footnote}}

\setcounter{equation}{0}
\section{Introduction} 

The aim of this paper is to introduce and analyze a novel approach to 
approximate the evolution of curves by anisotropic curve shortening flow. 
The evolution law that we consider arises as a natural gradient flow
for the anisotropic, spatially inhomogeneous, energy
\begin{equation} \label{eq:aniso}
\mathcal E(\Gamma) = \int_\Gamma \altm(z)\gamma(z,\nu) \dH1(z)
= \int_\Gamma \altm \gamma(\cdot,\nu) \dH1
\end{equation}
for a closed curve $\Gamma$, with unit normal $\nu$, that is  
contained in the domain $\Omega \subset \bR^2$. In the above,
$\gamma: \Omega \times \bR^2 \rightarrow \bRgeq$ denotes the anisotropy 
function and $\altm : \Omega \rightarrow \bRplus$
is a positive weight function.  In the spatially homogeneous
case, i.e.\
\begin{equation} \label{eq:homogeneous}
\gamma(z,p)= \gamma_0(p) \quad \text{and} \quad \altm(z)= 1
\qquad \forall\ z \in \Omega = \bR^2,
\end{equation}
the corresponding functional $\mathcal E$  frequently occurs as an interfacial energy, e.g.\
in models of crystal growth, \cite{TaylorCH92,Gurtin93}. Our more general setting is
motivated by the work \cite{BellettiniP96} of Bellettini and Paolini, who consider the gradient flow for a perimeter
functional $P_\phi$ that is associated with a Finsler metric $\phi$. Equations (2.5), (2.6) in \cite{BellettiniP96}
show that $\mathcal E(\Gamma)=P_\phi(\Gamma)$ if one chooses $\gamma$ as the dual of $\phi$
and $\altm$ in terms of the $2$--dimensional $\phi$--volume. As an important special case we mention
that the choices
\begin{equation} \label{eq:riemgamma}
\gamma(z,p)= \sqrt{G^{-1}(z) p \cdot p} \quad \text{and} \quad 
\altm(z)=\sqrt{\det G(z)}
\end{equation}
can be used to describe the length of a curve in a two-dimensional Riemannian manifold $(\mathcal M, g)$. In this
case $G(z)$ is the first fundamental form arising from a local parameterization of $\mathcal M$, cf.\ Example \ref{ex:1}\ref{item:ex1c} below. 
Apart from being of geometric interest, the functional $\mathcal E$ also has applications in image processing, \cite{ClarenzDR03}. \\
The natural gradient flow for the energy $\mathcal E$ evolves a family of 
curves $\Gamma(t) \subset \Omega$ according to the law 
\begin{equation}  \label{eq:acsfintro}
\mathcal{V}_\gamma = \varkappa_\gamma, 
\end{equation}
where $\mathcal{V}_\gamma$ and $\varkappa_\gamma$ are the anisotropic normal velocity and the anisotropic curvature, respectively.
The precise definitions of these quantities are based on a formula
for the first variation of $\mathcal E$, and they will be given in 
Section~\ref{sec:prelim}, below. In the isotropic case, i.e.\ when
\begin{equation} \label{eq:iso}
\gamma(z,p)=|p| \quad \text{and} \quad \altm(z)= 1
\qquad \forall\ z \in \Omega = \bR^2,
\end{equation}
we have that (\ref{eq:acsfintro}) is just the well-known curve shortening flow,
$\mathcal V = \varkappa$, with $\mathcal V$ and $\varkappa$ denoting the 
normal velocity and the curvature of $\Gamma(t)$, respectively.
For theoretical aspects of the anisotropic evolution law \eqref{eq:acsfintro}
we refer to \cite{Bellettini04a,Giga06}. Further information 
on (spatially homogeneous) anisotropic surface energies and the 
corresponding gradient flow can be found in  
\cite{DeckelnickDE05,Giga06} and the references therein.

In this paper we are interested in the numerical solution of (\ref{eq:acsfintro}) based on a parametric description of the evolving curves, i.e.\ $\Gamma(t)=
x(I,t)$ for some mapping $x:I \times [0,T] \rightarrow \Omega$. 
Here most of the existing literature has focused on the spatially homogeneous
case \eqref{eq:homogeneous}. 
Then the law \eqref{eq:acsfintro} reduces to
$\frac1{\gamma_0(\nu)} \mathcal V = \varkappa_{\gamma_0}$, see
Section~\ref{sec:prelim} for details, which can be viewed as a special case of
the weighted anisotropic curvature flow
\begin{equation} \label{eq:betaflow}
\altbeta_0(\nu) \mathcal V = \varkappa_{\gamma_0},
\end{equation}
for some mobility function $\altbeta_0$, 
see e.g.\ \cite[(8.20)]{DeckelnickDE05}. 
In \cite{Dziuk99}, a finite element scheme is proposed and analyzed for 
\eqref{eq:betaflow} with $\altbeta_0=1$. The method uses a variational formulation
of the parabolic system
\begin{equation} \label{eq:anisosystem}
x_t = \varkappa_{\gamma_0} \nu,
\end{equation}
and is generalized to higher codimension in \cite{Pozzi07}. 
A drawback of this approach is that the above system is degenerate in tangential direction and that the
numerical analysis requires 
an additional equation for the length element. A way to circumvent this difficulty consists in replacing (\ref{eq:anisosystem}) by a strictly parabolic system with
the help of a suitable tangential motion, known as DeTurck's trick in the literature. For the isotropic case, corresponding schemes have been suggested 
and analyzed in \cite{DeckelnickD95,ElliottF17}.
We mention that alternative parametric approaches for \eqref{eq:betaflow} 
allow for some benign tangential motion, see e.g.\
\cite{MikulaS01,MikulaS04a,triplejANI,fdfi}. \\
Since the choice \eqref{eq:riemgamma} allows to describe the length of a curve in a Riemannian manifold $(\mathcal M,g)$, it is
possible to use \eqref{eq:acsfintro} in order to treat geodesic curvature flow in $\mathcal M$ within our framework. 
Existing parametric approaches for this flow on a hypersurface of 
$\bR^3$ include the work \cite{MikulaS04} for the flow on a graph,
as well as \cite{curves3d} for the case that the hypersurface is given as a 
level set. Moreover, numerical schemes for the geodesic curvature flow 
(and other flows) of curves in a locally flat two-dimensional Riemannian 
manifold have been proposed in \cite{hypbol}. 
To the best of our knowledge, no error bounds have been derived for a numerical
approximation of geodesic curvature flow in Riemannian manifolds in the literature so
far.

In this paper we propose and analyze a new method for solving (\ref{eq:acsfintro}), which is based on DeTurck's trick, and which applies to general, spatially inhomogeneous anisotropies. Let
us outline the contents of the paper and  describe  the main results  of our work. 
By taking advantage of the fact that the function $(z,p) \mapsto \frac{1}{2} \altm^2(z) \gamma^2(z,p)$ is strictly convex in $p$, 
we derive in Section~\ref{sec:deturck} 
a strictly parabolic system whose solution satisfies (\ref{eq:acsfintro}). It turns out that this system can be written in a variational form, which makes it accessible to discretization by linear finite elements. In the isotropic case, the resulting numerical scheme is precisely the method proposed and analyzed in \cite{DeckelnickD95}, while in the anisotropic
case we obtain a novel scheme that can be considered as a generalization of the ideas in \cite{DeckelnickD95} and \cite{ElliottF17}. As one of the main results of this paper
 we show in Section~\ref{sec:fem} an optimal $H^1$--error bound in the continuous-in-time semidiscrete case. 
Unlike in \cite{Dziuk99} and \cite{Pozzi07}, the corresponding
proof does not need an equation for the length element because of the strict 
parabolicity of the underlying partial differential equation.
In order to discretize in time, we use the backward Euler method.
In particular, in Section~\ref{sec:fds}, as another important contribution
of our work, we introduce unconditionally stable
fully discrete finite element approximations for the following scenarios:
\renewcommand{\labelenumi}{(\alph{enumi})}
\renewcommand{\theenumi}{(\alph{enumi})}
\begin{enumerate}[noitemsep,topsep=-5pt]
\itemsep 0pt
\item
a spatially homogeneous, smooth anisotropy function 
$\gamma(z,p) = \gamma_0(p)$;
\item \label{item:BGN}
a spatially homogeneous anisotropy function 
$\gamma(z,p)=\sum_{\ell=1}^L \sqrt{\Lambda_{\ell} p \cdot  p}$, where $\Lambda_\ell$ are symmetric and positive definite matrices; 
\item 
a spatially inhomogeneous anisotropy function of the form \eqref{eq:riemgamma} 
to model geodesic curvature flow in a two-dimensional Riemannian manifold.
\end{enumerate}
In particular, the functions in \ref{item:BGN} can be used to approximate the case of a crystalline anisotropy, cf.\ \cite{triplejANI}.
Using these three fully discrete schemes, we present in Section~\ref{sec:nr} 
results of test calculations that confirm our error bound and show that the tangential motion that is introduced in our
approach has a positive effect on the distribution of grid points along the discrete curve. 

As our approach is based on a parameterization of the evolving curves, we only briefly mention numerical methods that employ an implicit description such as
the level-set method or the phase-field approach. The interested reader may consult \cite{ObermanOTT11,eck} for anisotropic curve shortening flow, as well
as \cite{ChoppS93,ChengBMO02,SpiraK07} for the geodesic curvature flow. These papers also provided additional references.

We end this section with a few comments about notation.
Throughout, we let $I=\RZ$ denote the periodic interval $[0,1]$.
We adopt the standard notation for Sobolev spaces, denoting the norm of
$W^{\ell,p}(I)$ ($\ell \in \bN_0$, $p \in [1, \infty]$)
by $\|\cdot \|_{\ell,p}$ and the 
seminorm by $|\cdot|_{\ell,p}$. 
For $p=2$, $W^{\ell,2}(I)$ will be denoted by
$H^{\ell}(I)$ with the associated norm and seminorm written as,
respectively, $\|\cdot\|_\ell$ and $|\cdot|_\ell$.
The above are naturally extended to vector functions, and we will write 
$[W^{\ell,p}(I)]^2$ for a vector function with two components.
For later use we recall the well-known Sobolev embedding
$H^1(I) \hookrightarrow C^0(I)$, i.e.\ there exists $C_I > 0$ such that
\begin{equation} \label{eq:sobolev}
\|f\|_{0,\infty} \leq C_I \|f\|_1 \qquad \forall\ f \in H^1(I).
\end{equation}
Furthermore, throughout the paper $C$ will denote a generic positive 
constant independent of the mesh parameter $h$, see below.
At times $\epsilon$ will play the role of a (small)
positive parameter, with $C_\epsilon>0$ depending on $\epsilon$, but
independent of $h$.
Finally, in this paper we make use of the Einstein summation convention.

\setcounter{equation}{0}
\section{Anisotropy and anisotropic curve shortening flow} \label{sec:prelim}

Let $\Omega \subset \bR^2$ be a domain,
and let $\altm \in C^2(\Omega, \bRplus)$. Moreover, we assume that 
$\gamma \in C^0(\Omega \times \bR^2, \bRgeq) \cap 
C^3(\Omega \times (\bR^2 \setminus \{ 0 \}), \bRplus)$,
as well as
\begin{equation} \label{eq:phi1} 
\gamma(z, \lambda p) =  \lambda  \gamma(z,p) \qquad \forall\
z \in \Omega,\ p \in \bR^2,\ \lambda \in \bRplus ,
\end{equation}
which means that $\gamma$ is positively one-homogeneous with respect to the
second variable.
It is not difficult to verify that (\ref{eq:phi1}) implies that
\begin{align}
& \gamma_p(z,\lambda p) = \gamma_p(z,p), \quad
\gamma_p(z,p)\cdot p = \gamma(z,p) 
\quad\mbox{and}\quad
\gamma_{pp}(z,p) p  = 0 \nonumber \\ & \hspace{8cm}
\qquad \forall\ z \in \Omega,\  p \in \bR^2\setminus\{0\},\  
\lambda \in \bRplus.
\label{eq:phidd}
\end{align}
Here $\gamma_p=(\gamma_{p_j})_{j=1}^2$ and 
$\gamma_{pp}=(\gamma_{p_i p_j})_{i,j=1}^2$ 
denote the first and second derivatives of $\gamma$ with respect to the second
argument. Similarly, we let $\gamma_z=(\gamma_{z_j})_{j=1}^2$ denote the 
derivatives of $\gamma$ with respect to the first argument.
We note for later use that on differentiating \eqref{eq:phidd} with respect to
$z$ we immediately obtain that the functions $\gamma_{z_j}(z,\cdot)$ and 
$\gamma_{pz_j}(z,\cdot)$ are positively one- and zero-homogeneous, respectively, 
for every $z \in \Omega$. 
In addition, we assume that $p \mapsto \gamma(z, p)$ is strictly convex for every $z \in \Omega$ in the sense that
\begin{equation} \label{eq:sconv}
\gamma_{pp}(z,p) q \cdot q >0 \qquad \forall\ z \in \Omega, \ p,q  \in \bR^2 \mbox{ with } |p|=|q| =1, p \cdot q =0.
\end{equation}

We are now in a position to define anisotropic curve shortening flow. 
To this end, with the help of Corollary~4.3 in \cite{DoganN12}, we first 
state the first variation of the functional $\mathcal E$ in (\ref{eq:aniso}),
the proof of which will be given in Appendix~\ref{sec:appA}.

\begin{lemma} \label{lem:firstvar}
Let $\Gamma \subset \Omega$ be a smooth curve with unit normal $\nu$,
unit tangent $\tau$ and scalar curvature $\varkappa$. 
Let $V$ be a smooth vector field
defined in an open neighbourhood of $\Gamma$. Then 
the first variation of $\mathcal E$ at $\Gamma$ in the direction $V$ is
given by
\begin{equation} \label{eq:firstvar2}
{\rm d} \mathcal E(\Gamma; V) = 
- \int_\Gamma \varkappa_\gamma \, V \cdot \nu_\gamma\, 
\altm \gamma(\cdot,\nu) \dH1,
\end{equation}
where  
\begin{equation} \label{eq:nugamma}
\nu_\gamma = \frac{\nu}{\gamma(\cdot,\nu)} \quad\text{and}\quad
 \varkappa_\gamma = \varkappa \gamma_{pp}(\cdot,\nu) \tau \cdot \tau 
- \gamma_{p_i z_i}(\cdot,\nu) 
- \frac{\nabla \altm}{\altm} \cdot \gamma_p(\cdot,\nu)
\quad \text{on } \Gamma
\end{equation}
denote the anisotropic normal and the anisotropic curvature of $\Gamma$,
respectively. 
\end{lemma}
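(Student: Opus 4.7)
\textit{Proof plan.} I would parametrize $\Gamma$ by a smooth closed map $x:I\to\Omega$ with tangent $\tau = x_\rho/|x_\rho|$ and unit normal $\nu$, consider the perturbed curves $\Gamma_s = (x+sV\circ x)(I)$ with induced normal $\nu_s$, and differentiate
\[
\mathcal E(\Gamma_s) = \int_I \gamma(x_s,\nu_s)\,\altm(x_s)\,|x_{s,\rho}|\drho
\]
at $s=0$. Equivalently, one may invoke \cite[Cor.~4.3]{DoganN12} directly for the integrand $\psi(z,p):=\gamma(z,p)\,\altm(z)$, which is positively one-homogeneous in $p$, and then simplify; this is the route suggested by the statement of the lemma.

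The first step records the elementary shape derivatives
\[
\tfrac{d}{ds}\big|_{s=0}|x_{s,\rho}| = (DV(x)\tau\cdot\tau)\,|x_\rho|,\qquad
\tfrac{d}{ds}\big|_{s=0}\nu_s = -(DV(x)\tau\cdot\nu)\,\tau,
\]
and combines them with the chain rules for $\gamma(x_s,\nu_s)$ and $\altm(x_s)$. Using $DV(x)\tau=\partial_s(V\circ x)$ (arc-length derivative along $\Gamma$), the resulting first variation splits into a bulk contribution proportional to $V$ together with two pieces proportional to $\partial_s V\cdot\tau$ and $\partial_s V\cdot\nu$, the latter multiplied by $\gamma_p(\cdot,\nu)\cdot\tau$.

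Next I would decompose $V = (V\cdot\tau)\tau + (V\cdot\nu)\nu$ and integrate by parts in $s$; since $\Gamma$ is closed there are no boundary terms. Applying the Frenet-type identities $\partial_s\tau = \varkappa\nu$, $\partial_s\nu = -\varkappa\tau$ together with $\gamma_p(\cdot,\nu)\cdot\nu = \gamma(\cdot,\nu)$ from \eqref{eq:phidd}, the coefficient of $V\cdot\tau$ vanishes identically, reflecting the reparametrisation invariance of $\mathcal E$. To recognise the coefficient of $V\cdot\nu$ as $-\varkappa_\gamma\,\altm$, I would (i) use $\gamma_{pp}(\cdot,\nu)\nu = 0$ from \eqref{eq:phidd} to turn the contribution of $\partial_s\nu$ inside $\partial_s[\gamma_p(\cdot,\nu)\cdot\tau]$ into $-\varkappa\,\gamma_{pp}(\cdot,\nu)\tau\cdot\tau$; (ii) combine $\gamma_z(\cdot,\nu)\cdot\nu$ with $\gamma_{p_jz_i}(\cdot,\nu)\tau_i\tau_j$ by differentiating the Euler identity $\gamma_p(\cdot,p)\cdot p = \gamma(\cdot,p)$ in $z$, which yields $\gamma_{z_i}(\cdot,\nu) = \gamma_{p_jz_i}(\cdot,\nu)\nu_j$ and hence, via $\nu_i\nu_j+\tau_i\tau_j=\delta_{ij}$, the compact form $\gamma_{p_iz_i}(\cdot,\nu)$; and (iii) gather the weight contributions using $\gamma_p = (\gamma_p\cdot\tau)\tau + \gamma(\cdot,\nu)\,\nu$ into $\altm^{-1}\nabla\altm\cdot\gamma_p(\cdot,\nu)$. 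Writing finally $V\cdot\nu = \gamma(\cdot,\nu)\,V\cdot\nu_\gamma$ yields \eqref{eq:firstvar2}.

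The main obstacle is step (ii): the mixed derivative $\gamma_{p_iz_i}(\cdot,\nu)$ arises as the sum of two a priori distinct contributions, one from the direct $z$--differentiation of $\gamma$ along $\Gamma$ and one from the $z$--dependence that survives the integration by parts of $[\gamma_p(\cdot,\nu)\cdot\tau]\,\altm$; merging them relies crucially on the homogeneity properties of $\gamma_{pz_j}(z,\cdot)$ noted directly after \eqref{eq:phidd} together with the differentiated Euler identity. All remaining cancellations are algebraic once these structural identities are in hand.
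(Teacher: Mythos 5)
Your proposal is correct, and the calculation it outlines does close: I checked that with $\partial_s\tau=\varkappa\nu$, $\partial_s\nu=-\varkappa\tau$, the Euler relation $\gamma_p(\cdot,\nu)\cdot\nu=\gamma(\cdot,\nu)$ and its $z$--derivative $\gamma_{z_i}(\cdot,\nu)=\gamma_{p_jz_i}(\cdot,\nu)\nu_j$, the coefficient of $V\cdot\tau$ cancels identically and the coefficient of $V\cdot\nu$ assembles into $-\altm\,\varkappa_\gamma$ exactly as you describe; in particular your step (ii), merging $\gamma_{p_jz_i}\nu_j\nu_i$ with $\gamma_{p_iz_j}\tau_i\tau_j$ via $\nu_i\nu_j+\tau_i\tau_j=\delta_{ij}$, is the right mechanism. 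However, your route is genuinely different from the paper's. The paper does not parametrize and integrate by parts: it writes $\tilde\gamma=\altm\gamma$, quotes Corollary~4.3 of \cite{DoganN12} to obtain the first variation in the form $\int_\Gamma(\partial_\nu\tilde\gamma+\operatorname{div}_\Gamma\tilde\gamma_p+\tilde\gamma_{pp}:\nabla_\Gamma\nu)\,V\cdot\nu\dH1$ (the term $(\tilde\gamma-\tilde\gamma_p\cdot\nu)\varkappa$ dropping out by one-homogeneity), and then evaluates the three tangential-calculus terms using $\nabla_\Gamma\nu=-\varkappa\,\tau\otimes\tau$ and the same two structural identities you use. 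Your direct computation buys self-containedness and elementary character -- for a closed planar curve the integration by parts has no boundary terms and everything reduces to Frenet algebra -- whereas the paper's approach outsources the hard part to a general hypersurface result and therefore scales to higher dimensions and makes the comparison with \cite{DoganN12} transparent. Two cosmetic points: you use $s$ both for the variation parameter and for arclength, which should be disentangled in a write-up; and the identity $\gamma_{pp}(\cdot,\nu)\nu=0$ you invoke in step (i) is not actually needed there, since $\partial_s\nu=-\varkappa\tau$ already has no normal component.
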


We remark that the definitions in \eqref{eq:nugamma} correspond to 
(3.5) and (4.1) in \cite{BellettiniP96}. Note also that
$\nu_\gamma$ is a vector that is normal to $\Gamma$, but normalized in such a
way that $\gamma(z, \nu_\gamma(z)) = 1$, $z \in \Gamma$.
We remark that although $\varkappa_\gamma$ clearly depends on both $\gamma$ and 
$\altm$, we prefer to use the simpler notation that drops the dependence on
$\altm$.

Following \cite[(1.1)]{BellettiniP96}, we now consider a natural gradient flow
induced by \eqref{eq:firstvar2}.
In particular, given a family of curves $(\Gamma(t))_{t \in [0,T]}$ in
$\Omega$, we say that $\Gamma(t)$ evolves 
according to anisotropic curve shortening flow, provided that
\begin{equation}  \label{eq:acsf}
\mathcal{V}_\gamma = \varkappa_\gamma \quad \text{ on } \Gamma(t),
\end{equation}
where $\mathcal{V}_\gamma= (\mathcal{V} \nu) \cdot \nu_\gamma = 
\frac{1}{\gamma(\cdot,\nu)} \mathcal{V}$, 
with $\mathcal{V}$ denoting the normal velocity of $\Gamma(t)$,
and where $\varkappa_\gamma$ is defined in \eqref{eq:nugamma}.
We remark that the name of the flow is inspired by the fact that 
solutions of \eqref{eq:acsf} satisfy the energy relation
\begin{equation} \label{eq:anigradflow}
\ddt \int_{\Gamma(t)} a \, \gamma(\cdot,\nu) \dH1 + \int_{\Gamma(t)}
 | \mathcal V_\gamma |^2 \, a \,  \gamma(\cdot,\nu) \dH1  = 0.
\end{equation}
We note that the higher dimensional analogue of \eqref{eq:acsf} is usually
called anisotropic mean curvature flow or anisotropic motion by mean curvature.
Hence alternative names for the evolution law \eqref{eq:acsf} in the 
planar case treated in this paper are
anisotropic curvature flow and anisotropic motion by curvature.

\renewcommand{\labelenumi}{(\alph{enumi})}
\renewcommand{\theenumi}{(\alph{enumi})}
\begin{example} \label{ex:1}
\rule{0pt}{0pt}
\begin{enumerate}[noitemsep,topsep=-5pt]
\item \label{item:ex1a}
Isotropic case: 
We let $\gamma(z,p)=|p|$ and $\altm(z)= 1$ for all $z \in \Omega = \bR^2$, 
recall \eqref{eq:iso}, so that $\mathcal E(\Gamma)$ is the length of $\Gamma$. 
In this case \eqref{eq:acsf} is just the well-known curve shortening flow,
\begin{equation*} 
\mathcal{V} = \varkappa\quad \text{ on } \Gamma(t).
\end{equation*}
\item \label{item:ex1b}
Space-independent anisotropy: 
We let $\gamma(z,p)= \gamma_0(p)$ and $\altm(z)= 1$
for all $z \in \Omega = \bR^2$, recall \eqref{eq:homogeneous}, so that
$\mathcal E(\Gamma) = \int_\Gamma \gamma_0(\nu) \dH1$ is the 
associated anisotropic length. Then \eqref{eq:acsf} reduces to
\begin{equation} \label{eq:acsf0}
\frac{1}{\gamma_0(\nu)} \mathcal V = \varkappa_{\gamma_0} = 
\varkappa \gamma_0''(\nu) \tau \cdot \tau
\quad \text{ on } \Gamma(t),
\end{equation}
where here and throughout $\gamma_0'$ and $\gamma_0''$ denote the gradient and
Hessian of $\gamma_0$, respectively.
We observe that \eqref{eq:acsf0} corresponds to \cite[(8.20)]{DeckelnickDE05} 
with $\beta(\nu)= \frac1{\gamma_0(\nu)}$, see also \cite{Pozzi12} for a nice 
derivation of this law. 
Of course, for $\gamma_0(p) = |p|$ we obtain the isotropic case
\ref{item:ex1a}.
\item \label{item:ex1c}
Riemannian manifolds: 
Suppose that $(\mathcal M,g)$ is a two-dimensional Riemannian manifold. 
Let $F: \Omega \to \mathcal M$ be a local parameterization of $\mathcal M$,
$\{ \partial_1,\partial_2 \}$ the corresponding basis of the tangent space
$T_{F(z)} \mathcal M$ and
$g_{ij}(z)=g_{F(z)}(\partial_i,\partial_j)$, $z \in \Omega$,
as well as $G(z)=(g_{ij}(z))_{i,j=1}^2$.
We set $\gamma(z,p)= \sqrt{G^{-1}(z) p \cdot p}$ and 
$\altm(z)=\sqrt{\det G(z)}$. Then we have
\begin{equation} \label{eq:mgamma}
\altm(z) \gamma(z,p) = \sqrt{ \det G(z) G^{-1}(z) p \cdot p} 
= \sqrt{ G(z) p^\perp \cdot p^\perp},
\end{equation}
where $p^\perp = \binom{p_1}{p_2}^\perp = \binom{-p_2}{p_1}$ denotes
an anti-clockwise rotation of $p$ by $\frac{\pi}{2}$.
For a curve $\Gamma \subset \Omega$ the vector $\tau = -\nu^\perp$ then is a 
unit tangent and 
\begin{equation} \label{eq:Egamma}
\mathcal E(\Gamma) = \int_\Gamma \altm\gamma(\cdot,\nu) \dH1
= \int_\Gamma \sqrt{G \tau \cdot \tau} \dH1
\end{equation}
is the Riemannian length of the curve 
$\tilde \Gamma = F(\Gamma) \subset \mathcal M$. 
We show in Appendix~\ref{sec:appB} that
the geodesic curvature of $\tilde\Gamma$ at $F(z)$ is equal to
$\varkappa_\gamma$ at $z \in \Gamma$, and also that 
$(\Gamma(t))_{t \in [0,T]} \subset \Omega$ is a solution of 
\eqref{eq:acsf}, if and only if 
$\tilde \Gamma(t)=F(\Gamma(t))$
evolves according to geodesic curvature flow in $\mathcal M$.  
\end{enumerate}
\end{example}

\setcounter{equation}{0}
\section{DeTurck's trick for anisotropic curve shortening flow} 
\label{sec:deturck}

In what follows we shall employ a parametric description of the evolving curves. 
Let $\Gamma(t)=x(I,t)$, where $x:I \times [0,T] \rightarrow \bR$ and 
$I=\RZ$. 
In order to satisfy (\ref{eq:acsf}) we require that
\begin{equation} \label{eq:acsfIlift}
\frac{1}{\gamma(x,\nu \circ x)} x_t \cdot (\nu \circ x) = 
\varkappa_\gamma \circ x \qquad \text{in } I \times (0,T].
\end{equation}
{From} now on we fix a normal on $\Gamma(t)$ induced by the parameterization
$x$, and, as no confusion can arise, we identify $\nu \circ x$ with $\nu$,
$\varkappa_\gamma \circ x$ with $\varkappa_\gamma$ and similarly 
$\varkappa \circ x$ with $\varkappa$. In particular, 
we define the unit tangent, the unit normal and the curvature of $\Gamma(t)$ by
\begin{equation} \label{eq:tau}
\tau = 
\frac{x_\rho}{| x_\rho|}, \quad  \nu = \tau^\perp, \quad \varkappa =
\frac1{|  x_\rho|} \left( \frac{x_\rho}{| x_\rho|} \right)_\rho \cdot \nu 
= \frac{x_{\rho \rho}}{| x_\rho|^2} \cdot \nu.
\end{equation}
In place of \eqref{eq:acsfIlift} we simply write
\begin{equation} \label{eq:acsfI}
\frac{1}{\gamma(x,\nu)} x_t \cdot \nu = \varkappa_\gamma
\quad \text{in } I \times (0,T].
\end{equation}
Clearly, \eqref{eq:acsfI} only prescribes the normal component of the
velocity vector $x_t$, and so there is a certain freedom in the tangential 
direction. 
Our aim is to introduce a strictly parabolic system of partial differential
equations for the parameterization $x$, 
whose solution in normal direction still satisfies (\ref{eq:acsfI}).

By way of motivation, let us briefly review the DeTurck trick in the isotropic 
setting, recall \eqref{eq:iso} and Example~\ref{ex:1}\ref{item:ex1a}. 
Then \eqref{eq:acsfI} collapses to $x_t \cdot \nu = \varkappa$, and adjoining
a zero tangential velocity leads to the formulation 
$x_t = \varkappa \nu = \frac1{|x_\rho|}(\frac{x_\rho}{|x_\rho|})_\rho$,
as the isotropic equivalent to \eqref{eq:anisosystem}.
We recall that optimal error bounds for a semidiscrete continuous-in-time 
finite element approximation of this formulation have been obtained 
in the seminal paper \cite{Dziuk94} by Gerd Dziuk.
One difficulty of Dziuk's original approach is that
the analysed system is degenerate in the tangential direction.
DeTurck's trick addresses this problem by removing the degeneracy 
through a suitable reparameterization. 
In fact, it is natural to consider the system 
\begin{equation} \label{eq:csfdeTurck}
x_t = \frac{x_{\rho \rho}}{|x_\rho|^2} ,
\end{equation}
recall \eqref{eq:tau}, 
for which a semidiscretization by linear finite elements was analyzed in
\cite{DeckelnickD95}. The appeal of this approach is that the analysis is very
elegant and simple. For example, the weak formulation of \eqref{eq:csfdeTurck} 
is given by
\begin{equation} \label{eq:DD95}
\int_I | x_\rho|^2 x_t \cdot \eta\drho 
+ \int_I x_\rho \cdot \eta_\rho \drho =0 \qquad \forall\ \eta \in [H^1(I)]^2,
\end{equation}
and choosing $\eta=x_t$ immediately gives rise to the estimate
\begin{equation} \label{eq:isoDeTurck}
\tfrac12 \ddt \int_I |x_\rho |^2 \drho 
= \int_I x_\rho \cdot x_{t \rho} \drho
= - \int_I |x_\rho |^2 |x_t|^2 \drho \leq 0,
\end{equation}
which can be mimicked on the discrete level.

Our starting point for extending DeTurck's trick to the anisotropic setting
is to define the function $\Phi: \Omega \times \bR^2 \to \bRgeq$ by setting
\begin{equation} \label{eq:Phi}
\Phi(z,p) = \tfrac12 \altm^2(z) \gamma^2(z,p^\perp)
\qquad \forall\ z \in \Omega,\ p \in \bR^2.
\end{equation}
We mention that the square of the anisotropy function plays an important role 
in the phase field approach to anisotropic mean curvature flow, 
cf.\ \cite{ElliottS96,AlfaroGHMS10,eck}. 

On noting \eqref{eq:tau}, \eqref{eq:phi1} and \eqref{eq:Phi} we compute, similarly to \eqref{eq:isoDeTurck}, that
\begin{align} \label{eq:motivation}
\tfrac12 \ddt \int_I \altm^2(x) \gamma^2(x,\nu) | x_\rho |^2 \drho & = \ddt \int_I \Phi( x,  x_\rho) \drho 
= \int_I \Phi_p(x,x_\rho) \cdot x_{t \rho} + \Phi_z(x,x_\rho) \cdot x_t \drho \nonumber \\
& = - \int_I \bigl( [\Phi_p(x, x_\rho)]_\rho - \Phi_z(x,x_\rho) \bigr) \cdot x_t \drho.
\end{align}
The crucial idea is now to define positive definite matrices 
$H(z,p) \in \bR^{2 \times 2}$, for 
$(z,p) \in \Omega \times (\bR^2 \setminus \{ 0 \})$, such that
if a sufficiently smooth $x$ satisfies
\begin{equation} \label{eq:Hxt}
H(x,x_\rho) x_t = [\Phi_p(x, x_\rho)]_\rho - \Phi_z(x,x_\rho) 
\quad \text{in } I \times (0,T],
\end{equation}
then $x$ is a solution to anisotropic curve shortening flow, \eqref{eq:acsfI}. 
For the construction of the matrices $H$ it is important to relate the right
hand side in \eqref{eq:Hxt} to the right hand side in \eqref{eq:acsfI}.
We begin by calculating
\begin{align*} 
\Phi_p(z,p) &= -\altm^2(z) \gamma(z,p^\perp) \gamma_p^\perp(z,p^\perp) 
\qquad \forall\ z \in \Omega,\ p \in \bR^2 \setminus\{0\},\\ 
\Phi_z(z,p) &= \altm^2(z) \gamma(z,p^\perp) \gamma_z(z,p^\perp) 
+ \altm(z) \gamma^2(z,p^\perp) \nabla \altm(z)
\qquad \forall\ z \in \Omega,\ p \in \bR^2, 
\end{align*}
where we use the notation $\gamma_p^\perp(z,p) = [\gamma_p(z,p)]^\perp$. 
Furthermore, we obtain with the help of \eqref{eq:phi1}, \eqref{eq:phidd} and \eqref{eq:tau} that
\begin{align*} 
[\Phi_p(x,x_\rho)]_\rho &
= -[\altm(x) \gamma(x,x_\rho^\perp) \altm(x) \gamma_p^\perp(x,x_\rho^\perp)]_\rho \\
& = -[\altm(x) \gamma(x,x_\rho^\perp)]_\rho \altm(x) \gamma_p^\perp(x,x_\rho^\perp) - \altm^2(x)\gamma(x,x_\rho^\perp) x_{j,\rho} \gamma_{pz_j}^\perp(x,x_\rho^\perp) \\
& \quad - \altm(x) \gamma(x,x_\rho^\perp) \nabla \altm(x) \cdot x_\rho \gamma_p^\perp(x,x_\rho^\perp)  
- \altm^2(x) \gamma(x,x_\rho^\perp) 
( \gamma_{pp}(x,x_\rho^\perp) x_{\rho \rho}^\perp )^\perp \\
& = -[\altm(x) \gamma(x,x_\rho^\perp)]_\rho \altm(x) \gamma_p^\perp(x,\nu) 
+ \altm^2(x) \gamma(x,\nu) | x_\rho |^2 
\bigl( \nu_1 \gamma_{pz_2}^\perp(x,\nu) - \nu_2 \gamma_{pz_1}^\perp(x,\nu) \bigr) \\
& \quad - \altm(x) | x_\rho | \gamma(x,\nu) \nabla \altm(x) \cdot x_\rho \gamma_p^\perp(x,\nu)
+ \altm^2(x) | x_\rho |^2  \gamma(x,\nu) \varkappa (\gamma_{pp}(x,\nu) \tau \cdot \tau) \nu.
\end{align*}
Similarly,
\begin{displaymath}
\Phi_z(x,x_\rho)= \altm^2(x) | x_\rho |^2 \gamma(x,\nu) \gamma_z(x,\nu) 
+ \altm(x) | x_\rho |^2 \gamma^2(x,\nu) \nabla \altm(x),
\end{displaymath}
and therefore
\begin{align}\label{eq:amb} 
[\Phi_p(x,x_\rho)]_\rho - \Phi_z(x,x_\rho) &
= -[\altm(x) \gamma(x,x_\rho^\perp)]_\rho \altm(x) \gamma_p^\perp(x,\nu) 
+ \altm^2(x) | x_\rho |^2  \gamma(x,\nu) \varkappa (\gamma_{pp}(x,\nu) 
\tau \cdot \tau) \nu \nonumber \\ & \quad 
+ \altm^2(x) \gamma(x,\nu) | x_\rho |^2 
\bigl( \nu_1 \gamma_{pz_2}^\perp(x,\nu) - \nu_2 \gamma_{pz_1}^\perp(x,\nu) 
- \gamma_z(x,\nu) \bigr) \nonumber \\
& \quad - \altm(x) | x_\rho |^2 \gamma(x,\nu) 
\bigl( \nabla \altm(x) \cdot \tau \gamma_p^\perp(x,\nu) 
+ \gamma(x,\nu) \nabla \altm(x) \bigr).
\end{align}
Observing that $\gamma_{z_i}(x,\nu) = \gamma_{pz_i}(x,\nu) \cdot \nu$,
recall \eqref{eq:phidd}, we find
\begin{displaymath}
[ \nu_1 \gamma_{pz_2}^\perp - \nu_2 \gamma_{pz_1}^\perp - \gamma_z]_1
= -\nu_1 \gamma_{p_2 z_2} + \nu_2 \gamma_{p_2 z_1} - \nu_1 \gamma_{p_1 z_1} - \nu_2 \gamma_{p_2 z_1}
= -(\gamma_{p_1 z_1} + \gamma_{p_2 z_2} ) \nu_1,
\end{displaymath}
and a similar argument for the second component yields
\begin{equation} \label{eq:nugp}
\nu_1 \gamma_{pz_2}^\perp(x,\nu) - \nu_2 \gamma_{pz_1}^\perp(x,\nu) 
- \gamma_z(x,\nu) = - \gamma_{p_i z_i}(x,\nu) \nu.
\end{equation}
Next, since 
\begin{equation} \label{eq:gpdottau}
\gamma_p^\perp(x,\nu) \cdot \tau = -\gamma_p(x,\nu) \cdot \nu = -\gamma(x,\nu)
\quad \text{and} \quad
\gamma_p^\perp(x,\nu) \cdot \nu = \gamma_p(x,\nu) \cdot \tau,
\end{equation}
we derive
\begin{align} \label{eq:nablaa}
\nabla \altm  \cdot \tau \gamma_p^\perp + \gamma \nabla \altm 
& = (\nabla \altm \cdot \tau) \left( (\gamma_p^\perp \cdot \tau) \tau 
+ (\gamma_p^\perp \cdot \nu) \nu \right) 
+ \gamma \left( (\nabla \altm \cdot \tau) \tau 
+ (\nabla \altm \cdot \nu) \nu \right) \nonumber \\
& = ( \nabla \altm \cdot \tau) ( \gamma_p \cdot \tau) \nu 
+ (\nabla \altm \cdot \nu) (\gamma_p \cdot \nu) \nu 
= (\nabla \altm \cdot \gamma_p) \nu.
\end{align}
If we insert \eqref{eq:nugp} and \eqref{eq:nablaa} into (\ref{eq:amb}), 
recall \eqref{eq:nugamma} and abbreviate 
$\omega(x)= [ \altm(x) \gamma(x,x_\rho^\perp)]_\rho$, we obtain
\begin{equation} \label{eq:amb1}
[\Phi_p(x,x_\rho)]_\rho - \Phi_z(x,x_\rho) 
= -\omega(x)\altm(x) \gamma_p^\perp(x,\nu) 
+ \altm^2(x) | x_\rho |^2 \gamma(x,\nu) \varkappa_\gamma \nu.
 \end{equation}

Let us now assume that $x$ is a solution of \eqref{eq:Hxt}, where $H(z,p)$ is an invertible matrix of the form
\begin{equation*} 
H(z,p) = \begin{pmatrix} 
\alpha(z,p) & - \beta(z,p) \\ \beta(z,p) & \alpha(z,p) 
\end{pmatrix}
\qquad \forall\ z \in \Omega,\ p \in \bR^2\setminus\{0\}.
\end{equation*}
In order to determine $\alpha(z,p), \beta(z,p) \in \bR$ such that $x$
satisfies \eqref{eq:acsfI}, we calculate
\begin{align*}
x_t \cdot \nu & = H^{-1}(x,x_\rho) \Bigl( [\Phi_p(x,x_\rho)]_\rho - \Phi_z(x,x_\rho) \Bigr) \cdot \nu \\
& = \frac{1}{\alpha^2(x,x_\rho)+ \beta^2(x,x_\rho)} \Bigl( [\Phi_p(x,x_\rho)]_\rho - \Phi_z(x,x_\rho) \Bigr) \cdot H(x,x_\rho) \nu.
\end{align*}
If we multiply by $\alpha^2(x,x_\rho) + \beta^2(x, x_\rho)$ and insert 
\eqref{eq:amb1} we obtain, on noting 
$\binom{-\nu_2}{\nu_1} = \nu^\perp = -\tau$ and \eqref{eq:gpdottau}, that
\begin{align*} 
& (\alpha^2(x,x_\rho) + \beta^2(x, x_\rho)) x_t \cdot \nu \nonumber \\ & \quad
= \bigl( -\omega(x)\altm(x) \gamma_p^\perp(x,\nu) 
+ \altm^2(x) | x_\rho |^2 \gamma(x,\nu) \varkappa_\gamma \nu \bigr) \cdot \bigl( \alpha(x,x_\rho) \nu - \beta(x,x_\rho) \tau \bigr) \nonumber \\ & \quad
= \omega(x) \altm(x)  \bigl( -\alpha(x,x_\rho) \gamma_p(x,\nu) \cdot \tau - \beta(x,x_\rho)  \gamma(x,\nu) \bigr)
+ \alpha(x,x_\rho) \altm^2(x) | x_\rho |^2 \gamma(x,\nu) \varkappa_\gamma \nonumber \\ & \quad
= \alpha(x,x_\rho) \altm^2(x) | x_\rho |^2 \gamma(x,\nu) \varkappa_\gamma,
\end{align*}
provided that 
$\alpha(x,x_\rho) \gamma_p(x,\nu) \cdot \tau + \beta(x,x_\rho)  \gamma(x,\nu)=0$.
With this choice we obtain that $\displaystyle \beta^2(x,x_\rho) = \alpha^2(x,x_\rho) 
\frac{(\gamma_p(x,\nu) \cdot \tau)^2}{\gamma^2(x,\nu)}$,
and so
\begin{align*}
\frac{1}{\gamma(x,\nu)} x_t \cdot \nu & = \frac{\alpha(x,x_\rho)}{\alpha^2(x,x_\rho) + \beta^2(x,x_\rho)} \altm^2(x) | x_\rho |^2  
\varkappa_\gamma \\
& = \frac{1}{\alpha(x,x_\rho)} \frac{\gamma^2(x,\nu)}{\gamma^2(x,\nu) + (\gamma_p(x,\nu) \cdot \tau)^2} | x_\rho |^2  \altm^2(x) \varkappa_\gamma 
& = \frac{1}{\alpha(x,x_\rho)} 
\frac{\gamma^2(x, |x_\rho| \nu)}{| \gamma_p(x,\nu)|^2} \altm^2(x) \varkappa_\gamma,
\end{align*}
where in the last step we have used the one-homogeneity of $\gamma$, recall
\eqref{eq:phidd}. 
Clearly, (\ref{eq:acsfI}) will now hold if we choose
\[
\alpha(z,p)  = 
\frac{\altm^2(z) \gamma^2(z,p^\perp)}{| \gamma_p(z,p^\perp) |^2}, \quad
\beta(z,p)  = - \alpha(z,p) 
\frac{\gamma_p(z,p^\perp) \cdot p}{\gamma(z,p^\perp)}= -\frac{\altm^2(z) \gamma(z,p^\perp) \gamma_p(z,p^\perp) \cdot p}{| \gamma_p(z,p^\perp) |^2}.
\]
In summary, we have shown the following result.

\begin{lemma} 
Let $\Phi(z,p) = \tfrac12 \altm^2(z) \gamma^2(z,p^\perp)$ and 
\begin{equation} \label{eq:Hfinal}
H(z,p)
= \frac{\altm^2(z) \gamma(z,p^\perp)}{| \gamma_p(z,p^\perp) |^2}
\begin{pmatrix} 
\gamma(z,p^\perp) & \gamma_p(z,p^\perp) \cdot p \\
- \gamma_p(z,p^\perp) \cdot p & \gamma(z, p^\perp) 
\end{pmatrix} \qquad \forall\ z \in \Omega,\ p \in \bR^2\setminus\{0\}.
\end{equation}
If $x : I \times [0,T] \to \Omega$ satisfies \eqref{eq:Hxt}, 
then $x$ is a solution to anisotropic curve shortening flow, \eqref{eq:acsfI}. 
In addition $H$ is positive definite in $\Omega \times (\bR^2\setminus\{0\})$ 
with
\begin{equation} \label{eq:Hposdef}
H(z,p) w \cdot w = \frac{\altm^2(z) \gamma^2(z,p^\perp)}
{| \gamma_p(z,p^\perp) |^2} |w|^2 \qquad \forall\
z \in \Omega,\ p \in \bR^2 \setminus \{ 0 \},\ w \in \bR^2.
\end{equation}
\end{lemma}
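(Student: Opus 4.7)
The plan is to treat the long computation preceding the statement as essentially the body of the proof and to organize it into verifications of the two claims. First I would read off from \eqref{eq:Hfinal} that $H$ has the form $\begin{pmatrix} \alpha(z,p) & -\beta(z,p) \\ \beta(z,p) & \alpha(z,p) \end{pmatrix}$ with
\[
\alpha(z,p) = \frac{\altm^2(z)\, \gamma^2(z,p^\perp)}{|\gamma_p(z,p^\perp)|^2}, \qquad \beta(z,p) = -\frac{\altm^2(z)\, \gamma(z,p^\perp)\, \gamma_p(z,p^\perp) \cdot p}{|\gamma_p(z,p^\perp)|^2},
\]
which are precisely the entries singled out in the derivation above.

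For the first claim, I would take $x$ to be a smooth solution of \eqref{eq:Hxt}, invert $H$, take the inner product with $\nu$, and substitute the already-established identity \eqref{eq:amb1}. The crucial cancellation is that these $\alpha, \beta$ satisfy $\alpha(x,x_\rho)\, \gamma_p(x,\nu)\cdot\tau + \beta(x,x_\rho)\, \gamma(x,\nu) = 0$, which — using zero-homogeneity of $\gamma_p$ and one-homogeneity of $\gamma$ in the second argument together with \eqref{eq:gpdottau} — is exactly the relation that eliminates the $\omega(x)$-term in \eqref{eq:amb1} after dotting with $\nu$. The surviving term reduces, via the identity $\gamma(x,|x_\rho|\nu) = |x_\rho|\gamma(x,\nu)$ from \eqref{eq:phi1}, to $\tfrac{1}{\gamma(x,\nu)}\, x_t \cdot \nu = \varkappa_\gamma$, which is \eqref{eq:acsfI}.

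For the positive definiteness, a direct $2 \times 2$ computation yields $H(z,p)\, w \cdot w = \alpha(z,p)\, |w|^2$, since the antisymmetric off-diagonal contributions $-\beta\, w_1 w_2$ and $\beta\, w_2 w_1$ cancel; substituting the formula for $\alpha$ then gives \eqref{eq:Hposdef}. It remains only to confirm $\alpha(z,p) > 0$ for $p \neq 0$: positivity of $\gamma(z,p^\perp)$ is immediate from the standing assumption $\gamma \in C^3(\Omega \times (\bR^2 \setminus \{0\}), \bRplus)$, while non-vanishing of $|\gamma_p(z,p^\perp)|$ follows from the Euler-type relation $\gamma_p(z,p^\perp) \cdot p^\perp = \gamma(z,p^\perp) > 0$ recorded in \eqref{eq:phidd}. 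The only delicate bookkeeping point is precisely this last one — ensuring the denominator in \eqref{eq:Hfinal} never vanishes — since otherwise the statement itself would be ill-posed; beyond this, no step is subtle.
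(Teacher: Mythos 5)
Your proposal is correct and follows essentially the same route as the paper: the paper's argument is exactly the computation culminating in \eqref{eq:amb1}, the ansatz $H=\bigl(\begin{smallmatrix}\alpha & -\beta\\ \beta & \alpha\end{smallmatrix}\bigr)$, the cancellation condition $\alpha\,\gamma_p(x,\nu)\cdot\tau+\beta\,\gamma(x,\nu)=0$, and the resulting formulas for $\alpha,\beta$, with \eqref{eq:Hposdef} following from the skew off-diagonal structure. Your extra remark that $|\gamma_p(z,p^\perp)|\neq 0$ via the Euler relation $\gamma_p(z,p^\perp)\cdot p^\perp=\gamma(z,p^\perp)>0$ is a correct and worthwhile piece of bookkeeping that the paper leaves implicit.
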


Furthermore, it can be shown that the system \eqref{eq:Hxt} is strictly
parabolic. The proof hinges on the fact that $H$ and $\Phi_{pp}$ are positive
definite matrices in $\Omega \times (\bR^2 \setminus \{ 0 \})$. This property
of $\Phi_{pp}$ immediately follows from our convexity assumptions on $\gamma$, 
recall \eqref{eq:sconv}. 

\begin{lemma} 
Let $K \subset \Omega \times (\bR^2 \setminus \{ 0 \})$ be compact. 
Then there exists $\sigma_K>0$ such that 
\begin{equation} \label{eq:ellipt}
\Phi_{pp}(z,p) w \cdot  w \geq \sigma_K | w |^2 
\qquad \forall\ (z,p) \in K, \, w \in \bR^2.
\end{equation}
Furthermore, 
\begin{equation} \label{eq:lambdaL}
\Phi(z,q) - \Phi(z,p) - \Phi_p(z,p) \cdot (q -p) \geq 
\tfrac12 \sigma_K |  q -  p |^2 
\qquad \forall\ (z,p), (z,q) \in K \mbox{ with } \{ z \} \times [p,q] \subset K.
\end{equation}
Here, $[p,q] \subset \bR^2$ denotes the line segment connecting $p$ and $q$.
\end{lemma}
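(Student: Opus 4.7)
The plan is to reduce the problem to a pointwise lower bound on the Hessian of $G(z,p) := \tfrac12 \gamma^2(z,p)$. Because $\Phi(z,p) = \altm^2(z)\,G(z,p^\perp)$ and $p\mapsto p^\perp$ is an orthogonal linear map, the chain rule yields
\[
\Phi_{pp}(z,p)\,w\cdot w = \altm^2(z)\,G_{pp}(z,p^\perp)\,w^\perp\cdot w^\perp
\qquad \forall\ (z,p)\in\Omega\times(\bR^2\setminus\{0\}),\ w\in\bR^2,
\]
and $|w^\perp|=|w|$. Therefore \eqref{eq:ellipt} will follow once $G_{pp}$ is shown to be pointwise positive definite on $\Omega\times(\bR^2\setminus\{0\})$, together with a compactness argument to produce a uniform constant on $K$.

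The crucial step is the pointwise claim: for every $z\in\Omega$ and $q\neq 0$,
\[
G_{pp}(z,q)\,w\cdot w = (\gamma_p(z,q)\cdot w)^2 + \gamma(z,q)\,\gamma_{pp}(z,q)\,w\cdot w > 0 \qquad \forall\ w \in \bR^2\setminus\{0\}.
\]
Both summands on the right are non-negative: the first trivially, the second because I can decompose $w=\alpha q + \beta q^\perp$ and use $\gamma_{pp}(z,q)q=0$ from \eqref{eq:phidd} to reduce it to $\beta^2\gamma(z,q)\,\gamma_{pp}(z,q)\,q^\perp\cdot q^\perp$. Rescaling $q$ to unit length via the $(-1)$-homogeneity of $\gamma_{pp}$ (inherited from the 0-homogeneity of $\gamma_p$ in \eqref{eq:phidd}), the strict convexity hypothesis \eqref{eq:sconv} makes this quantity strictly positive, so vanishing of the second summand forces $\beta=0$. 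For such $w=\alpha q$ the first summand becomes $\alpha^2(\gamma_p(z,q)\cdot q)^2 = \alpha^2\gamma^2(z,q)$ by \eqref{eq:phidd}, and its vanishing forces $\alpha=0$. This proves positive definiteness.

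By continuity, the smallest eigenvalue of $(z,p)\mapsto \altm^2(z)G_{pp}(z,p^\perp)$ is a positive continuous function on $\Omega\times(\bR^2\setminus\{0\})$, so it attains a positive minimum $\sigma_K$ on the compact set $K$; this gives \eqref{eq:ellipt}. The second assertion \eqref{eq:lambdaL} is then the standard Taylor expansion with integral remainder,
\[
\Phi(z,q)-\Phi(z,p)-\Phi_p(z,p)\cdot(q-p) = \int_0^1 (1-t)\,\Phi_{pp}\bigl(z,p+t(q-p)\bigr)(q-p,q-p)\dt,
\]
and the hypothesis $\{z\}\times[p,q]\subset K$ is exactly what permits \eqref{eq:ellipt} to be applied uniformly under the integral, yielding the factor $\tfrac12\sigma_K|q-p|^2$.

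The only real subtlety is the pointwise positive definiteness of $G_{pp}$: hypothesis \eqref{eq:sconv} controls $\gamma_{pp}$ only in directions orthogonal to $p$, so positivity in the direction of $p$ itself must be recovered from the squared term $(\gamma_p\cdot w)^2$ via the first-order identity $\gamma_p(z,p)\cdot p=\gamma(z,p)>0$ from \eqref{eq:phidd}. Everything else is routine continuity/compactness and Taylor expansion.
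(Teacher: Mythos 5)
Your proof is correct, and its overall skeleton (pointwise positive definiteness of $\Phi_{pp}$, then compactness, then Taylor expansion with integral remainder) matches the paper's. The one substantive difference is that the paper outsources the key pointwise step to a citation --- it simply invokes \cite[Remark~1.7.5]{Giga06} for the fact that \eqref{eq:sconv} implies $\Phi_{pp}(z,p)$ is positive definite for $p \neq 0$ --- whereas you prove it from scratch: you peel off the factor $\altm^2(z)$ and the orthogonal map $p \mapsto p^\perp$, expand $G_{pp} = \gamma_p \otimes \gamma_p + \gamma\,\gamma_{pp}$, and then handle the two complementary directions separately, using the degenerate direction $\gamma_{pp}(z,q)q = 0$ together with Euler's identity $\gamma_p(z,q)\cdot q = \gamma(z,q) > 0$ from \eqref{eq:phidd} to recover positivity along $q$, and \eqref{eq:sconv} (after rescaling via the $(-1)$-homogeneity of $\gamma_{pp}$) to get it along $q^\perp$. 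This correctly identifies the only non-trivial point, namely that \eqref{eq:sconv} constrains $\gamma_{pp}$ only orthogonally to $p$, and makes the lemma self-contained at the cost of a page of computation. Your single-integral remainder $\int_0^1 (1-t)\,\Phi_{pp}(z,p+t(q-p))(q-p)\cdot(q-p)\dt$ is equivalent to the paper's double-integral form and yields the same factor $\tfrac12\sigma_K$; the hypothesis $\{z\}\times[p,q]\subset K$ is used identically in both arguments.
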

\begin{proof} 
It is shown in \cite[Remark~1.7.5]{Giga06} that \eqref{eq:sconv} implies that
$\Phi_{pp}(z,p)$ is positive definite for all $z \in \Omega$ and $p \neq 0$.
The bound (\ref{eq:ellipt}) then follows with the help of a compactness
argument, while the elementary identity
\begin{align*} 
\Phi(z,q) - \Phi(z,p) - \Phi_p(z,p) \cdot (q -p) & = \int_0^1 \bigl( 
\Phi_p(z, s q + (1-s) p) - \Phi_p(z,p) \bigr) 
\cdot (q -  p) \ds \nonumber \\ \qquad
& = \int_0^1 \int_0^1 s 
\Phi_{pp}(z, \theta s q + (1 - \theta s)  p) (q -p) 
\cdot ( q - p) \dtheta \ds ,
\end{align*}
together with \eqref{eq:ellipt}, implies (\ref{eq:lambdaL}). 
\end{proof}

\begin{lemma} \label{lem:petrovsky}
The system \eqref{eq:Hxt} is parabolic in the sense of Petrovsky.
\end{lemma}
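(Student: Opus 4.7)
The plan is to identify the principal part of the quasilinear system \eqref{eq:Hxt}, read off the associated symbol, and show that its eigenvalues lie in the right half-plane. Writing $[\Phi_p(x,x_\rho)]_\rho = \Phi_{pp}(x,x_\rho)\, x_{\rho\rho} + \Phi_{pz}(x,x_\rho)\, x_\rho$, the system \eqref{eq:Hxt} takes the form
\begin{equation*}
H(x,x_\rho)\, x_t - \Phi_{pp}(x,x_\rho)\, x_{\rho\rho} = \text{lower order terms in } x \text{ and } x_\rho.
\end{equation*}
Freezing coefficients at a point $(\rho_0,t_0)$ where $x_\rho \neq 0$ and setting $p_0 = x_\rho(\rho_0,t_0)$, $z_0 = x(\rho_0,t_0)$, the principal symbol in the Fourier variable $\xi \in \bR$ is
\begin{equation*}
L(\xi) = -\mathrm{i}\, (-\mathrm{i}\xi)^0\, H(z_0,p_0) + \xi^2 \, \Phi_{pp}(z_0,p_0),
\end{equation*}
so that Petrovsky parabolicity amounts to showing that the roots $\lambda = \lambda(\xi)$ of $\det(\lambda H(z_0,p_0) + \xi^2 \Phi_{pp}(z_0,p_0)) = 0$ satisfy $\operatorname{Re}\lambda \leq -\delta\, \xi^2$ for some $\delta>0$ uniformly on compact subsets of $\Omega\times(\bR^2\setminus\{0\})$, or, equivalently, that the eigenvalues of $H^{-1}\Phi_{pp}$ have strictly positive real parts.

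The first step is to verify that $H(z_0,p_0)$ is invertible and that its symmetric part is positive definite. From \eqref{eq:Hposdef} we have $H w\cdot w = \frac{\altm^2 \gamma^2(z_0,p_0^\perp)}{|\gamma_p(z_0,p_0^\perp)|^2}|w|^2 > 0$ for all $w\neq 0$, which says precisely that $\tfrac12(H+H^T)$ is positive definite; in particular $H$ is invertible. The second step is to apply the previous lemma with $K=\{(z_0,p_0)\}$ to obtain $\Phi_{pp}(z_0,p_0) \geq \sigma_K I > 0$, so that the symmetric positive square root $\Phi_{pp}^{1/2}$ is defined.

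The third step is the matrix similarity
\begin{equation*}
\Phi_{pp}^{1/2}\bigl(H^{-1}\Phi_{pp}\bigr)\Phi_{pp}^{-1/2} = \Phi_{pp}^{1/2}\, H^{-1}\, \Phi_{pp}^{1/2} =: M,
\end{equation*}
so $H^{-1}\Phi_{pp}$ and $M$ have the same eigenvalues. Since $H$ has positive definite symmetric part, so does $H^{-1}$ (because $H^{-1}+H^{-T}=H^{-1}(H+H^T)H^{-T}$), and therefore $M+M^T = \Phi_{pp}^{1/2}(H^{-1}+H^{-T})\Phi_{pp}^{1/2}$ is positive definite. A standard Rayleigh-quotient argument (testing $Mv=\lambda v$ against $\bar v^T$ and taking real parts) then gives $\operatorname{Re}\lambda \geq \tfrac12 \lambda_{\min}(M+M^T) > 0$ for every eigenvalue $\lambda$ of $M$, and hence of $H^{-1}\Phi_{pp}$. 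Multiplying by $\xi^2$ yields the required Petrovsky condition, and the bounds are uniform on compact subsets of $\Omega\times(\bR^2\setminus\{0\})$ by continuity of $H$, $\Phi_{pp}$ and by \eqref{eq:ellipt} together with \eqref{eq:Hposdef}.

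The only mildly delicate point is that $H$ is non-symmetric, so diagonal dominance of the symmetric part alone does not immediately give eigenvalue information about the product $H^{-1}\Phi_{pp}$; the similarity transformation via $\Phi_{pp}^{1/2}$ is what makes the symmetric-part argument work. Everything else reduces to the two positivity statements \eqref{eq:Hposdef} and \eqref{eq:ellipt} that are already in hand.
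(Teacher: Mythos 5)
Your proof is correct, but it follows a genuinely different route from the paper's. Both arguments reduce the claim to showing that the eigenvalues of $H^{-1}\Phi_{pp}$ have positive real parts. The paper then exploits the special rotation--scaling form of $H$ from \eqref{eq:Hfinal}: for a $2\times 2$ matrix of that form one has $\operatorname{adj}(H)=H^T$ and $\tr(H^TA)=H_{11}\tr A$ for symmetric $A$, so $\det(H^{-1}A)=\det A/\det H>0$ and $\tr(H^{-1}A)=H_{11}\tr A/\det H>0$, and in two dimensions positivity of both the product and the sum of the eigenvalues forces them to be either two positive reals or a conjugate pair with positive real part. You instead symmetrize: the similarity $\Phi_{pp}^{1/2}(H^{-1}\Phi_{pp})\Phi_{pp}^{-1/2}=\Phi_{pp}^{1/2}H^{-1}\Phi_{pp}^{1/2}=:M$ has positive definite symmetric part because $H^{-1}+H^{-T}=H^{-1}(H+H^T)H^{-T}$ is a congruence of the positive definite matrix $H+H^T$ (which is $2\alpha\,\Id$ here, by \eqref{eq:Hposdef}), and the numerical-range argument then bounds $\operatorname{Re}\lambda$ from below by $\tfrac12\lambda_{\min}(M+M^T)>0$. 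All the individual steps check out, including the congruence identity and the Rayleigh-quotient estimate for complex eigenvectors of a real matrix. The trade-off: the paper's computation is shorter but tied to the explicit $2\times2$ structure of $H$, whereas your argument uses nothing beyond \eqref{eq:Hposdef} and \eqref{eq:ellipt} and is dimension-independent, so it would carry over verbatim to an analogous system for anisotropic mean curvature flow of hypersurfaces. Your version also delivers a uniform lower bound on $\operatorname{Re}\lambda$ over compact subsets of $\Omega\times(\bR^2\setminus\{0\})$, which is slightly stronger than the pointwise statement the paper verifies. The only cosmetic blemish is the garbled expression for the symbol, $L(\xi)=-\mathrm{i}(-\mathrm{i}\xi)^0 H+\xi^2\Phi_{pp}$; the correct reformulation you give immediately afterwards (roots of $\det(\lambda H+\xi^2\Phi_{pp})=0$ satisfying $\operatorname{Re}\lambda\leq-\delta\xi^2$) is what matters, and it is right.
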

\vspace{-4mm}
\begin{proof}
On inverting the matrix $H(x,x_\rho)$ we may write (\ref{eq:Hxt}) in the form
\begin{equation*} 
 x_t = H^{-1}(x,x_\rho) \Phi_{pp}(x,x_\rho) x_{\rho \rho} + H^{-1}(x,x_\rho) \bigl( \Phi_{pz}(x,x_\rho) x_\rho -
\Phi_z(x,x_\rho) \bigr) \quad \text{in } I \times (0,T].
\end{equation*}
Hence, by definition we need to show that the eigenvalues
of $H^{-1}(z,p) \Phi_{pp}(z,p)$ have positive real parts for every 
$(z,p) \in \Omega \times (\bR^2 \setminus \{ 0 \})$, 
see e.g.\ \cite[Definition~1.2]{EidelmanIK04}. Let us fix
$(z,p)$ and abbreviate $H=H(z,p)$, $A= \Phi_{pp}(z,p)$. 
The two eigenvalues $\lambda_1,\lambda_2 \in \bC$ of 
$H^{-1} A \in \bR^{2\times2}$ satisfy 
\[
\lambda_1 \lambda_2  = \det(H^{-1} A) = \frac{\det A}{\det H}>0,\quad
\lambda_1 + \lambda_2  = \tr (H^{-1} A) = \frac{\tr(H^T A)}{\det H} = \frac{H_{11} \tr A}{\det H} >0,
\]
since $H_{11}>0$ and $\det H >0$, recall \eqref{eq:Hfinal}, and since
$A$ is symmetric positive definite in view of \eqref{eq:ellipt}.
Hence, either both eigenvalues are positive real numbers, or 
$\lambda_2 = \overline{\lambda_1}$ with
$2 \text{Re} \lambda_1 =\lambda_1+\lambda_2 > 0$.
\end{proof}

In view of Lemma~\ref{lem:petrovsky} we expect that it is possible to prove the
short-time existence of a unique smooth solution to \eqref{eq:Hxt}.
Moreover, 
existence and uniqueness of classical smooth solutions to PDEs of the form
$x_t = \mathfrak b(\varkappa, \nu) \nu + \mathfrak a \tau$, arising
from closely related curvature driven geometric evolution equations,
have been obtained in \cite{MikulaS01}. 

The weak formulation of \eqref{eq:Hxt} now reads as follows.
Given $x_0 : I \to \Omega$, find $x:I \times [0,T] \rightarrow \Omega$ such 
that $x(\cdot,0)= x_0$ and, for $t \in (0,T]$,
\begin{equation} \label{eq:Hxtweak}
\int_I H(x,x_\rho) x_t \cdot \eta \drho
+ \int_I \Phi_p(x,x_\rho) \cdot \eta_\rho \drho + \int_I \Phi_z(x,x_\rho) \cdot \eta \drho = 0 \qquad \forall\ \eta \in [H^1(I)]^2.
\end{equation}

Choosing $\eta = x_t$ in \eqref{eq:Hxtweak} yields, on recalling
\eqref{eq:motivation} and \eqref{eq:Hposdef}, that
\begin{align} \label{eq:stab}
\tfrac12 \ddt \int_I \altm^2(x) \gamma^2(x,\nu) | x_\rho |^2 \drho & 
= \ddt \int_I \Phi( x,  x_\rho) \drho 
= \int_I \bigl(\Phi_p(x,x_\rho) \cdot x_{t\rho} + \Phi_z(x,x_\rho) \cdot x_t 
\bigr) \drho \nonumber \\ & 
= - \int_I  H(x,x_\rho) x_t \cdot x_t \drho  \leq 0.
\end{align}
Clearly \eqref{eq:stab} is the desired anisotropic analogue to
\eqref{eq:isoDeTurck}. So together with the fact that $x$ is a solution
of the gradient flow \eqref{eq:acsfI}, recall also \eqref{eq:anigradflow},
we obtain that both 
$\frac{1}{2} \int_I \altm^2(x) \gamma^2(x,\nu) | x_\rho |^2 \drho$ and
$\int_I a(x) \gamma(x,\nu) | x_\rho| \drho$ are
monotonically decreasing in time.

\begin{example} \label{ex:2}
We refer to the same numbering as in Example~\ref{ex:1}.
\begin{enumerate}[itemsep=1pt,topsep=-5pt]
\item 
Isotropic case: 
We have $\Phi(z,p)= \frac{1}{2} |p|^2$ and $H(z,p) = | p |^2 \Id$, so that 
\eqref{eq:Hxtweak} collapses to \eqref{eq:DD95}, which is the same as
(12) in \cite{DeckelnickD95}.
\item \label{item:ex2b}
Space-independent anisotropy: 
We have $\Phi(z,p)=\Phi_0(p)=\frac{1}{2} \gamma_0^2(p^\perp)$, so that 
\begin{subequations} 
\begin{align} 
\Phi_p(z,p) = \Phi_0'(p) & = - \gamma_0(p^\perp) [\gamma_0'(p^\perp)]^\perp\\
\text{and } 
H(z,p)= H_0(p) & =
\frac{\gamma_0(p^\perp)}{| \gamma_0'(p^\perp) |^2} 
\begin{pmatrix} \gamma_0(p^\perp) & \gamma_0'(p^\perp) \cdot p \\
 -\gamma_0'(p^\perp) \cdot p & \gamma_0(p^\perp) \end{pmatrix}.
\label{eq:H0}
\end{align}
\end{subequations}
Hence the weak formulation reads 
\begin{equation} \label{eq:ex2b}
\int_I H_0(x_\rho) x_t \cdot \eta \drho 
+ \int_I \Phi_0'(x_\rho) \cdot \eta_\rho \drho =0 
\qquad \forall\ \eta \in [H^1(I)]^2.
\end{equation}
\item \label{item:ex2c}
Riemannian manifolds: 
In view of  \eqref{eq:mgamma} we have $\Phi(z,p)= \frac{1}{2} G(z) p \cdot p$, while
\begin{align} 
H(z,p) & = \frac{\det G(z) \bigl( G^{-1}(z) p^\perp \cdot p^\perp \bigr)^{\frac{3}{2}}}{| G^{-1}(z) p^\perp|^2}
\begin{pmatrix}
\sqrt{G^{-1}(z) p^\perp \cdot p^\perp} & \frac{G^{-1}(z) p^\perp \cdot p}{\sqrt{G^{-1}(z) p^\perp \cdot p^\perp}} \\
-\frac{G^{-1}(z) p^\perp \cdot p}{\sqrt{G^{-1}(z) p^\perp \cdot p^\perp}} & \sqrt{G^{-1}(z) p^\perp \cdot p^\perp} \end{pmatrix} \nonumber  \\
& = \frac{(\det G(z)) G(z) p \cdot p}{| G(z) p |^2} \begin{pmatrix} G(z) p \cdot p & -G(z) p \cdot p^\perp \\
G(z) p \cdot p^\perp & G(z) p \cdot p \end{pmatrix}. \label{eq:Hriem}
\end{align}
Hence the weak formulation reads
\begin{displaymath}
\int_I H(x,x_\rho) x_t \cdot \eta \drho + \int_I G(x) x_\rho \cdot \eta_\rho \drho + \tfrac{1}{2} \int_I
\eta_i G_{z_i}(x) x_\rho \cdot x_\rho  \drho=0
\qquad\forall\ \eta \in [H^1(I)]^2.
\end{displaymath}
\end{enumerate}
\end{example}

\begin{remark} 
It is a straightforward matter to extend our approach for \eqref{eq:acsfI} 
to the more general flow
\begin{equation} \label{eq:betahat}
\altbeta(x,\nu) x_t \cdot \nu = \varkappa_\gamma
\quad \text{in } I \times (0,T],
\end{equation}
compare also with \eqref{eq:betaflow} in the space-independent case. 
In particular, it can be easily shown that if $x$ is a solution to 
\begin{equation*} 
\gamma(x,\nu) \altbeta(x,\nu)
H(x,x_\rho) x_t = [\Phi_p(x, x_\rho)]_\rho - \Phi_z(x,x_\rho) 
\quad \text{in } I\times (0,T],
\end{equation*}
then it automatically solves \eqref{eq:betahat}. 
Extending our analysis in Section~\ref{sec:fem} 
to this more general case is straightforward, 
on making the necessary smoothness assumptions on $\altbeta$.
\end{remark}

\setcounter{equation}{0}
\section{Finite element approximation} \label{sec:fem}

In order to define our finite element approximation, 
let $0=q_0 < q_1 < \ldots < q_{J-1}< q_J=1$  be a
decomposition of $[0,1]$ into intervals $I_j=[q_{j-1},q_j]$. Let $h_j=q_j - q_{j-1}$ as well as $h=\max_{1 \leq j \leq J} h_j$. We
assume that there exists a positive constant $c$ such that
\begin{equation*} 
h \leq c h_j, \quad 1 \leq j \leq J,
\end{equation*}
so that the resulting family of partitions of $[0,1]$ is quasiuniform. 
Within $I$ we identify $q_J=1$ with $q_0=0$ and define the finite element 
spaces
\begin{displaymath}
V^h = \{\chi \in C^0(I) : \chi\!\mid_{I_j} \mbox{ is affine},\ j=1,\ldots, J\}
\quad\text{and}\quad \Vh = [V^h]^2.
\end{displaymath}
Let $\{\chi_j\}_{j=1}^J$ denote the standard basis of $V^h$.
For later use, we let $\pi^h:C^0(I)\to V^h$ 
be the standard interpolation operator at the nodes $\{q_j\}_{j=1}^J$,
and we use the same notation for the interpolation of vector-valued
functions. 
It is well-known that for 
$k \in \{ 0,1 \}$, $\ell \in \{ 1,2 \}$ and $p \in [2,\infty]$ it holds that
\begin{subequations}
\begin{alignat}{2}
h^{\frac 1p - \frac 1r} \| \eta_h \|_{0,r} 
+ h | \eta_h |_{1,p} & \leq C \| \eta_h \|_{0,p} 
\qquad && \forall\ \eta_h \in V^h, \qquad r \in [p,\infty], 
\label{eq:inverse} \\
| \eta - \pi^h \eta |_{k,p} & \leq Ch^{\ell-k} | \eta |_{\ell,p} 
\qquad && \forall\ \eta \in W^{\ell,p}(I). \label{eq:estpih} 
\end{alignat}
\end{subequations}

Our semidiscrete approximation of \eqref{eq:Hxtweak} is now given as follows.
Find $x_h:I \times [0,T] \rightarrow \Omega$ such that 
$x_h(\cdot,0)= \pi^h x_0$ and, for $t \in (0,T]$, $x_h(\cdot,t) \in \Vh$ such
that
\begin{equation} \label{eq:Hxtdiscrete}
\int_I H(x_h,x_{h,\rho}) x_{h,t} \cdot \eta_h \drho
+ \int_I \Phi_p(x_h,x_{h,\rho}) \cdot \eta_{h,\rho} \drho + \int_I \Phi_z(x_h,x_{h,\rho}) \cdot \eta_h \drho = 0 
\qquad \forall\ \eta_h \in \Vh.
\end{equation}
Expanding $x_h(\cdot,t)=\sum_{j=1}^J x_h(q_j,t) \chi_j$ 
we find that (\ref{eq:Hxtdiscrete}) gives rise to a system of 
ordinary differential equations (ODEs) in 
$\bR^{2J}$, which has a unique solution on some interval $[0,T_h)$. By choosing $\eta_h = x_{h,t}$ one also immediately obtains a semidiscrete analogue of (\ref{eq:stab}).

In what follows we assume that (\ref{eq:Hxt}) has a smooth solution
$x: I \times [0,T] \rightarrow \Omega$ satisfying 
\begin{equation} \label{eq:regul}
0 < c_0 \leq | x_\rho | \leq C_0 \quad \mbox{ in } I \times [0,T] \quad \mbox{ and } \int_0^T \| x_t \|_{0,\infty} \dt \leq C_0. 
\end{equation}
Let $S= x(I \times [0,T])$. Then there exists $\delta>0$ such that $\overline{B_\delta(S)} \subset \Omega$ and we define the compact set 
$K= \overline{B_\delta(S)} \times (\overline{B_{2C_0}(0)} \setminus B_{\frac{c_0}{2}}(0)) \subset \Omega \times (\bR^2 \setminus \{ 0 \})$.
We may choose $M_K \geq 0$ and $c_1>0$ such that 
\begin{align} 
& \max_{| \beta | \leq 3} \max_{(z,p) \in K} | D^\beta \gamma(z,p) | \leq M_K, 
\quad \max_{| \beta | \leq 2} \max_{z \in \overline{B_\delta(S)}}  | D^\beta \altm (z) | \leq M_K, \label{eq:upb}  \\
& \gamma(z,p) \geq c_1, \; | \gamma_p(z,p) | \geq c_1, \; \altm(z) \geq c_1 
\qquad \forall\ z \in \overline{B_\delta(S)},\ 
p \in \overline{B_{2C_0}(0)} \setminus B_{\frac{c_0}{2}}(0) . \label{eq:lob}
\end{align}

\begin{theorem} \label{thm:errest}
Suppose that \eqref{eq:Hxt} has a smooth solution 
$x:I \times [0,T] \rightarrow \Omega$ satisfying \eqref{eq:regul}.  
Then there exists $h_0>0$ such that for $0<h \leq h_0$ the semidiscrete problem
\eqref{eq:Hxtdiscrete} has a unique solution 
$x_h:I \times [0,T] \rightarrow \Omega$, and the following error bounds hold:
\begin{equation} \label{eq:errest}
\max_{t \in [0,T]} \| x(\cdot,t) - x_h(\cdot,t) \|_1^2 + \int_0^T \|  x_t - x_{h,t} \|_0^2 \dt
\leq C h^2.
\end{equation}
\end{theorem}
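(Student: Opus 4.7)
The plan is to adapt the energy method of \cite{DeckelnickD95} to the anisotropic setting, using the positive definiteness of $H$ and $\Phi_{pp}$ established in the preceding section, together with a bootstrap argument that confines the discrete solution to a compact set on which the coefficients are uniformly regular. First I would note that \eqref{eq:Hxtdiscrete} is equivalent to a smooth system of ODEs in $\bR^{2J}$ for the coefficients $x_h(q_j,t)$ as long as $(x_h, x_{h,\rho})$ stays in $\Omega \times (\bR^2 \setminus \{0\})$, so Picard--Lindel\"of yields local existence and uniqueness on a maximal interval $[0,T_h)$. To extend $T_h$ up to $T$ alongside the error bound, I would introduce the bootstrap quantity
\[
T_h^* = \sup\bigl\{\tau \in [0, \min(T,T_h)] : \|x(\cdot,t) - x_h(\cdot,t)\|_1 \leq h^{3/4} \text{ for all } t \in [0,\tau]\bigr\}.
\]
For $h$ sufficiently small, combining the Sobolev embedding \eqref{eq:sobolev} with the estimates \eqref{eq:estpih} and \eqref{eq:inverse} makes both $\|x - x_h\|_{0,\infty}$ and $\|x_\rho - x_{h,\rho}\|_{0,\infty}$ small on $[0,T_h^*]$; together with \eqref{eq:regul} this guarantees $(x_h(\rho,t), x_{h,\rho}(\rho,t)) \in K$ for all $(\rho,t) \in I \times [0,T_h^*]$, so \eqref{eq:upb}, \eqref{eq:lob}, \eqref{eq:Hposdef} and \eqref{eq:ellipt}--\eqref{eq:lambdaL} apply with $K$-uniform constants.

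Next I would derive the error equation by subtracting \eqref{eq:Hxtdiscrete} from \eqref{eq:Hxtweak} tested with $\eta_h = \pi^h x_t - x_{h,t} \in \Vh$. Splitting $H(x,x_\rho) x_t - H(x_h,x_{h,\rho}) x_{h,t} = H(x_h, x_{h,\rho})(x_t - x_{h,t}) + [H(x,x_\rho) - H(x_h,x_{h,\rho})] x_t$ and $\eta_h = (x_t - x_{h,t}) - (x_t - \pi^h x_t)$, the diagonal contributions from the $H$- and $\Phi_p$-terms furnish the coercivity: \eqref{eq:Hposdef} gives
\[
\int_I H(x_h, x_{h,\rho})(x_t - x_{h,t}) \cdot (x_t - x_{h,t}) \drho \geq c \|x_t - x_{h,t}\|_0^2,
\]
while \eqref{eq:lambdaL}, combined with the Lipschitz dependence of $\Phi_p$ on $z$ inherited from \eqref{eq:upb}, yields
\[
\int_I \bigl[\Phi_p(x,x_\rho) - \Phi_p(x_h,x_{h,\rho})\bigr] \cdot (x_\rho - x_{h,\rho}) \drho \geq \sigma_K \|x_\rho - x_{h,\rho}\|_0^2 - C\|x - x_h\|_0^2.
\]

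Setting $e = x - x_h = (x - \pi^h x) + (\pi^h x - x_h) =: r_h + \theta_h$ with $\theta_h \in \Vh$, I would bound every remaining cross term by Cauchy--Schwarz, Young's inequality with a small parameter $\epsilon$, the interpolation estimates \eqref{eq:estpih}, and the Lipschitz continuity of $H$, $\Phi_p$, $\Phi_z$ on $K$. Choosing $\epsilon$ small enough to absorb all $\|x_t - x_{h,t}\|_0^2$ and $\|x_\rho - x_{h,\rho}\|_0^2$ contributions into the left-hand side produces a differential inequality of the form
\[
\frac{d}{dt}\|\theta_{h,\rho}\|_0^2 + c\|x_t - x_{h,t}\|_0^2 \leq C\|\theta_h\|_1^2 + Ch^2,
\]
with $C$ depending on norms of $x$ controlled by \eqref{eq:regul} and higher regularity of the exact solution. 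Since $\theta_h(\cdot,0) = 0$ by the choice of initial data, Gronwall's inequality combined with the triangle inequality and \eqref{eq:estpih} delivers \eqref{eq:errest} on $[0,T_h^*]$. For $h \leq h_0$ small enough this implies $\|x - x_h\|_1 \leq Ch < h^{3/4}$, strictly improving the bootstrap threshold, so $T_h^* = T$ and the proof closes.

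The main obstacle will be the spatial dependence of $\gamma$ and $\altm$. Unlike in the space-homogeneous case of \cite{DeckelnickD95}, the $\Phi_z$ contribution and the $z$-derivatives of $H$ introduce perturbations that are only linear in $e = x - x_h$ and a priori compete at the same scale as the coercive $H^1$-term. Absorbing them requires exploiting \eqref{eq:lambdaL} in close combination with the uniform positivity \eqref{eq:Hposdef} of $H$ on $K$, and carefully monitoring how the interpolation-error powers of $h$ are distributed between $H^1$-type and $L^2$-type contributions.
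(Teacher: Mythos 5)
Your outline follows the same skeleton as the paper's proof (a bootstrap set, the test function $\pi^h x_t - x_{h,t} = \pi^h e_t$, coercivity of $H$, convexity of $\Phi$, Gronwall), but it has a genuine gap at the central step. Since you test with an interpolant of $e_t$, the $\Phi_p$-difference in the error equation is paired with $e_{t\rho}$, not with $e_\rho$. Your displayed ``coercivity'' inequality
\begin{equation*}
\int_I \bigl[\Phi_p(x,x_\rho) - \Phi_p(x_h,x_{h,\rho})\bigr]\cdot(x_\rho - x_{h,\rho})\drho \geq \sigma_K \|x_\rho - x_{h,\rho}\|_0^2 - C\|x-x_h\|_0^2
\end{equation*}
concerns a pairing that never occurs in the error equation; the term you must actually control is $\int_I \bigl(\Phi_p(x_h,x_\rho) - \Phi_p(x_h,x_{h,\rho})\bigr)\cdot e_{t\rho}\drho$. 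In the isotropic case of \cite{DeckelnickD95} this is simply $\tfrac12\ddt\|e_\rho\|_0^2$ because $\Phi_p$ is linear, but here $\Phi_p$ is nonlinear in $p$ and depends on the moving argument $x_h$, so the passage from this cross term to the time derivative of an $H^1$-coercive quantity is precisely what has to be proved; your sketch asserts the resulting differential inequality $\ddt\|\theta_{h,\rho}\|_0^2 + c\|x_t-x_{h,t}\|_0^2 \leq \ldots$ without supplying the identity that produces it. The paper's resolution is the computation \eqref{eq:err3}: exploiting the positive $2$-homogeneity of $\Phi(z,\cdot)$ and $\Phi_{z_j}(z,\cdot)$ (so that $\Phi_p(z,p)\cdot p = 2\Phi(z,p)$, $\Phi_{pz_j}(z,p)\cdot p = 2\Phi_{z_j}(z,p)$ and $\Phi_p(z,p)=\Phi_{pp}(z,p)p$), one rewrites the cross term as $\ddt$ of the Bregman-type quantity $\Phi(x_h,x_{h,\rho}) - \Phi(x_h,x_\rho) - \Phi_p(x_h,x_\rho)\cdot(x_{h,\rho}-x_\rho)$ minus second-order Taylor remainders bounded by $C(\|x_{t\rho}\|_{0,\infty} + \|x_{h,t}\|_{0,\infty})\|e_\rho\|_0^2$; only then does \eqref{eq:lambdaL} enter, to show that this quantity dominates $\|e_\rho\|_0^2$ up to $\|e\|_0^2 + h^2$ terms. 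Without this (or an equivalent) identity your argument does not close.

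A secondary but real issue: the remainders above give a Gronwall factor $1 + \|x_{h,t}\|_{0,\infty}$, so you need $\int_0^t \|x_{h,t}\|_{0,\infty}\ds$ bounded uniformly in $h$ \emph{before} the error estimate is available. Your bootstrap controls only $\|x(\cdot,t) - x_h(\cdot,t)\|_1$ at each time and does not yield this; the paper includes $\int_0^t\|x_{h,t}\|_{0,\infty}\ds \leq 2C_0$ in the definition of the continuation time $\hat T_h$ and then recovers the improved bound $\leq \tfrac32 C_0$ from the inverse estimate together with $\int_0^{\hat T_h}\|e_t\|_0^2\ds \leq Ch^2$. You would need to add an analogous condition to your bootstrap set for the Gronwall step to be legitimate.
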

\begin{proof} Let us define
\begin{align*}
\hat T_h = \sup \Bigl\{ t \in [0,T] & :
x_h \text{ solves } (\ref{eq:Hxtdiscrete}) \text{ on } [0,t],
\text{ with }
\int_0^t \|  x_{h,t} \|_{0,\infty} \ds \leq 2 C_0\text{ and } \\ & \quad
 \| (x- x_h )(\cdot,s) \|_{0,\infty} \leq \delta,\ 
\| (  x_{\rho} -  x_{h,\rho})(\cdot,s) \|_{0,\infty} \leq \tfrac12 c_0,
\ 0 \leq s \leq t \Bigr\}.
\end{align*}
Let $(\rho,t) \in I \times [0,\hat T_h)$. 
Since $|(1-\lambda) x(\rho,t) + \lambda x_h(\rho,t) - x(\rho,t) | 
\leq \| (x-x_h)(\cdot,t) \|_{0,\infty} \leq \delta$ for all
$\lambda \in [0,1]$, we find that $[x(\rho,t),x_h(\rho,t)] \subset \overline{B_\delta(S)}$. Arguing 
in a similar way for the first derivative, we deduce that 
\begin{equation} \label{eq:subK}
[x(\rho,t),x_h(\rho,t)] \times [x_\rho(\rho,t),x_{h,\rho}(\rho,t)] \subset K 
\qquad \forall\ (\rho,t) \in I \times [0,\hat T_h).
\end{equation}
Comparing \eqref{eq:Hxtweak} and \eqref{eq:Hxtdiscrete} we see that
the error $e=  x - x_h$ satisfies
\begin{align}\label{eq:err0}
&
\int_I H(x_h,x_{h,\rho}) e_t \cdot \eta_h \drho + \int_I \bigl( \Phi_p(x_h, x_\rho) - \Phi_p(x_h, x_{h,\rho}) \bigr) \cdot \eta_{h,\rho} \drho \nonumber \\ & \quad
= \int_I \bigl( H(x_h,x_{h,\rho}) - H(x,x_\rho) \bigr)  x_t \cdot \eta_h \drho + \int_I \bigl( \Phi_p(x_h, x_\rho) -
\Phi_p(x,x_\rho) \bigr) \cdot \eta_{h,\rho} \drho 
\nonumber \\ & \qquad 
+ \int_I \bigl( \Phi_z(x_h, x_{h,\rho}) - \Phi_z(x,x_\rho) \bigr) \cdot \eta_h \drho 
\qquad \forall\ \eta_h \in \Vh.
\end{align}
Using the identity
\begin{equation} \label{eq:ide}
e = x - \pi^h  x + \pi^h  e
\end{equation}
and choosing  $\eta_h = \pi^h e_t$ in \eqref{eq:err0}, we obtain
\begin{align} \label{eq:err1}
& \int_I H(x_h,  x_{h,\rho}) e_t \cdot e_t \drho + \int_I \bigl( \Phi_p(x_h,x_\rho) - \Phi_p(x_h,x_{h,\rho}) \bigr) \cdot  e_{t\rho} \drho 
\nonumber \\ & \quad
= \int_I H(x_h,x_{h,\rho})  e_t \cdot(  x_t - \pi^h x_t) \drho + \int_I \bigl( \Phi_p(x_h,x_\rho) - \Phi_p(x_h,x_{h,\rho}) \bigr) \cdot 
(  x_t - \pi^h  x_t)_\rho \drho 
\nonumber \\ & \qquad
+ \int_I \bigl( H(x_h, x_{h,\rho}) - H( x, x_\rho) \bigr)  x_t \cdot  \pi^h e_t \drho + \int_I \bigl( \Phi_p(x_h,x_\rho) -
\Phi_p(x, x_\rho) \bigr) \cdot (\pi^h e_t)_\rho \drho 
\nonumber \\ & \qquad
+ \int_I \bigl( \Phi_z(x_h, x_{h,\rho}) - \Phi_z(x,x_\rho) \bigr) \cdot \pi^h e_t \drho =: \sum_{i=1}^5 S_i.
\end{align}
Let us begin with the two terms on the left hand side of (\ref{eq:err1}). 
Clearly, \eqref{eq:Hposdef},  \eqref{eq:upb}, \eqref{eq:lob} and \eqref{eq:subK} imply that
\begin{equation} \label{eq:t1}
\int_I H(x_h, x_{h,\rho}) e_t \cdot  e_t \drho = \int_I \frac{\altm^2(x_h) \gamma^2(x_h,x_{h,\rho}^\perp)}
{| \gamma_p(x_h,x_{h,\rho}^\perp) |^2} | e_t |^2 \drho \geq \widetilde c_0 \| e_t \|_0^2,
\end{equation}
where $\widetilde c_0=M_K^{-2} c_1^4$. Next, we write
\begin{align*} 
& 
\bigl( \Phi_p(x_h,x_\rho) - \Phi_p(x_h, x_{h,\rho}) \bigr) \cdot  e_{t\rho} 
\nonumber \\ & \quad 
= \Phi_p(x_h, x_\rho) \cdot  x_{t\rho} + \Phi_p(x_h, x_{h,\rho}) \cdot  x_{h,t \rho} - \Phi_p(x_h, x_\rho) \cdot x_{h,t \rho}
- \Phi_p( x_h, x_{h,\rho}) \cdot x_{t\rho} \nonumber \\ & \quad
= \left[ \Phi(x_h, x_{h,\rho}) - \Phi_p(x_h, x_\rho) \cdot x_{h,\rho} \right]_t + \Phi_p(x_h, x_\rho) \cdot  x_{t\rho} 
- \Phi_p( x_h, x_{h,\rho}) \cdot  x_{t\rho}  
\nonumber  \\ & \qquad 
- \Phi_z(x_h, x_{h,\rho}) \cdot x_{h,t}  + \left[ \Phi_p(x_h, x_\rho) \right]_t \cdot  x_{h,\rho}.
\end{align*}
Since $p \mapsto \Phi(z,p)$ and $p \mapsto \Phi_{z_j}(z,p)$ are positively homogeneous of degree 2, we have
\begin{displaymath}
\Phi_p(x_h,x_\rho) \cdot x_\rho = 2 \Phi(x_h,x_\rho) \quad \mbox{ and } \quad
\Phi_{p z_j}(x_h,  x_\rho) \cdot x_\rho = 2 \Phi_{z_j}(x_h,  x_\rho),
\end{displaymath}
and therefore
\begin{align} \label{eq:err3}
&
\bigl( \Phi_p(x_h, x_\rho) - \Phi_p(x_h,x_{h,\rho}) \bigr) \cdot  e_{t\rho} \nonumber \\ & \quad
= \left[ \Phi(x_h, x_{h,\rho}) - \Phi(x_h, x_\rho) - \Phi_p(x_h,x_\rho) \cdot (x_{h,\rho} -  x_\rho) \right]_t - \left[ \Phi(x_h,x_\rho)
\right]_t \nonumber \\ & \qquad
+ \Phi_p(x_h, x_\rho) \cdot x_{t\rho} 
- \Phi_p( x_h, x_{h,\rho}) \cdot x_{t\rho}  - \Phi_z(x_h, x_{h,\rho}) \cdot x_{h,t}  + \left[ \Phi_p(x_h,x_\rho) \right]_t \cdot  x_{h,\rho} \nonumber \\ & \quad 
= \left[ \Phi(x_h, x_{h,\rho}) - \Phi(x_h, x_\rho) - \Phi_p( x_h, x_\rho) \cdot ( x_{h,\rho} - x_\rho) \right]_t - \bigl(
\Phi_z(x_h, x_{h,\rho}) + \Phi_z( x_h, x_\rho) \bigr) \cdot x_{h,t} \nonumber \\ & \qquad
-\Phi_p( x_h, x_{h,\rho}) \cdot x_{t\rho} + x_{h,j,t} \Phi_{p z_j}(x_h, x_\rho) \cdot  x_{h, \rho} + 
\Phi_{pp}(x_h,x_\rho)  x_{t\rho} \cdot  x_{h,\rho} 
\nonumber \\ & \quad
= \left[ \Phi( x_h, x_{h,\rho}) - \Phi(x_h, x_\rho) - \Phi_p(x_h,x_\rho) \cdot (x_{h,\rho} -  x_\rho) \right]_t
-\bigl( \Phi_p(x_h,x_{h,\rho})  - \Phi_{pp}(x_h, x_\rho)  x_{h,\rho} \bigr) \cdot x_{t\rho} \nonumber \\ & \qquad
- \bigl( \Phi_{z_j}(x_h, x_{h,\rho}) - \Phi_{z_j}(x_h, x_\rho) - \Phi_{p z_j}(x_h,x_\rho) \cdot (x_{h,\rho}- x_\rho) \bigr) x_{h,j,t}
 \nonumber \\ & \quad
= \left[ \Phi(x_h,x_{h,\rho}) - \Phi(x_h,x_\rho) - \Phi_p(x_h,x_\rho) \cdot ( x_{h,\rho} -  x_\rho) \right]_t 
\nonumber \\ & \qquad
 - \bigl( \Phi_p( x_h, x_{h,\rho}) - \Phi_p(x_h, x_\rho) - \Phi_{pp}( x_h, x_\rho)(x_{h,\rho} -  x_\rho) \bigr) \cdot  x_{t\rho}
 \nonumber \\ & \qquad
 - \bigl( \Phi_{z_j}(x_h, x_{h,\rho}) - \Phi_{z_j}(x_h,x_\rho) - \Phi_{p z_j}(x_h,x_\rho) \cdot ( x_{h,\rho}-  x_\rho) \bigr) x_{h,j,t},
\end{align}
where the last equality follows from the relation 
$\Phi_p(z,p) = \Phi_{pp}(z,p) p$, recall \eqref{eq:phidd}. 
If we combine (\ref{eq:err3}) with (\ref{eq:t1}) and use a Taylor expansion
together with \eqref{eq:upb} and \eqref{eq:subK}, we obtain for 
the left hand side of (\ref{eq:err1}), that 
\begin{align} \label{eq:err4}
&
\int_I H(x_h, x_{h,\rho}) e_t \cdot e_t \drho + \int_I \bigl( \Phi_p(x_h, x_\rho) - \Phi_p(x_h, x_{h,\rho}) \bigr) \cdot  e_{t\rho} \drho \nonumber \\ & \quad
\geq \widetilde c_0 \|  e_t \|_0^2 +
\ddt \int_I \Phi(x_h, x_{h,\rho}) - \Phi(x_h, x_\rho) - \Phi_p(x_h,x_\rho) \cdot ( x_{h,\rho} -  x_\rho) \drho \nonumber \\ & \qquad
- C \big( \|  x_{t\rho} \|_{0,\infty} + \| x_{h,t} \|_{0,\infty} \bigr) \|  e_\rho \|_0^2. 
\end{align}
Let us next estimate the terms on the right hand side of (\ref{eq:err1}). 
To begin, we obtain from \eqref{eq:Hfinal}, \eqref{eq:upb}, \eqref{eq:lob} and  \eqref{eq:estpih} that
\begin{subequations} \label{eq:s12345}
\begin{equation} 
S_1 
\leq C \|  e_t \|_0 \|    x_t - \pi^h  x_t \|_0 
\leq C h \|  x_{t\rho} \|_0 \|  e_t \|_0 
\leq \epsilon \|  e_t \|_0^2 + C_\epsilon h^2 \|  x_{t\rho} \|_0^2.
\end{equation}
The remaining terms involve differences between $\Phi_p$, $H$ and $\Phi_z$, which will be estimated with the help of
\eqref{eq:upb} and \eqref{eq:subK}. Using  \eqref{eq:estpih} we have
\begin{equation} 
S_2 
\leq C \|  e_\rho \|_0 \| ( x_t - \pi^h  x_t)_\rho \|_0 
\leq C h \|  x_{t\rho \rho} \|_0 \|  e_\rho \|_0 
\leq \|  e_\rho \|_0^2 + C h^2 \|  x_{t\rho \rho} \|_0^2,
\end{equation}
as well as
\begin{equation} 
S_3 
\leq C \| e \|_1 \|  x_t \|_{0,\infty} \| \pi^h  e_t \|_0 
\leq C \| e \|_1 \bigl( \|  e_t \|_0 + 
       \|  x_t - \pi^h  x_t \|_0 \bigr) 
\leq \epsilon \|  e_t \|_0^2 + C_\epsilon \| e \|_1^2 
+ C h^2 \|  x_{t\rho} \|_0^2,
\end{equation}
and similarly
\begin{equation} 
S_5 
\leq C \| e \|_1 \| \pi^h  e_t \|_0
\leq \epsilon \|  e_t \|_0^2 + C_\epsilon \| e \|_1^2 
+ C h^2 \|  x_{t\rho} \|_0^2.
\end{equation}
Finally, noting once again the identity \eqref{eq:ide} and the estimate
\eqref{eq:estpih}, we have 
\begin{align} 
S_4 & =  \ddt \int_I \bigl( \Phi_p(x_h,x_\rho)- \Phi_p(x, x_\rho) \bigr) \cdot (\pi^h  e)_\rho \drho -
\int_I \left[ \Phi_p(x_h, x_\rho) - \Phi_p(x, x_\rho) \right]_t \cdot (\pi^h e)_\rho \drho \nonumber \\ & 
= \ddt \int_I \bigl( \Phi_p( x_h, x_\rho)- \Phi_p( x, x_\rho) \bigr) \cdot (\pi^h  e)_\rho \drho \nonumber \\ & \quad 
+ \int_I \left[ \Phi_{pz}(x_h, x_\rho)  e_t + (\Phi_{pz}( x, x_\rho)-\Phi_{pz}(x_h,x_\rho))  x_t \right] \cdot (\pi^h  e)_\rho \drho \nonumber \\ & \quad
+ \int_I \left[ (\Phi_{pp}(x,x_\rho)-\Phi_{pp}(x_h, x_\rho))  x_{t\rho} \right] \cdot (\pi^h  e)_\rho \drho \nonumber \\ & 
\leq \ddt \int_I \bigl( \Phi_p(x_h, x_\rho)- \Phi_p(x, x_\rho) \bigr) \cdot (\pi^h  e)_\rho \drho \nonumber \\ & \quad 
+ C \bigl( \|  e_t \|_0 + \|  e \|_{0,\infty} \|  x_t \|_1 \bigr) \bigl( \| e_\rho \|_0 + \|  x_\rho - (\pi^h  x)_\rho \|_0 \bigr) 
\nonumber \\ & 
\leq \ddt \int_I \bigl( \Phi_p( x_h, x_\rho)- \Phi_p( x, x_\rho) \bigr) \cdot (\pi^h  e)_\rho \drho + \epsilon \|  e_t \|_0^2 +
C_\epsilon \|  e \|_1^2 + C_\epsilon h^2 \|  x_{\rho \rho} \|_0^2,
\end{align}
\end{subequations}
where in the last inequality we have also used the embedding result
\eqref{eq:sobolev}.
If we insert (\ref{eq:err4}) and \eqref{eq:s12345} into 
(\ref{eq:err1}), and choose $\epsilon$ sufficiently small, we obtain
\begin{equation} \label{eq:err5}
\tfrac12 \widetilde c_0 \|  e_t \|_0^2 + \mu'(t) \leq C \bigl( 1+ \|  x_{h,t} \|_{0,\infty} \bigr) \|  e \|_1^2 + C h^2,
\end{equation}
where
\begin{equation*} 
\mu(t)= \int_I \Phi(x_h,x_{h,\rho}) - \Phi(x_h, x_\rho) - \Phi_p(x_h, x_\rho) \cdot ( x_{h,\rho} -  x_\rho) -
\bigl( \Phi_p( x_h, x_\rho)- \Phi_p( x, x_\rho) \bigr) \cdot (\pi^h  e)_\rho \drho.
\end{equation*}
Clearly, we have from \eqref{eq:lambdaL},
on noting \eqref{eq:ide} and \eqref{eq:estpih}, that
\begin{align*} 
\mu(t) & 
\geq \tfrac12 \sigma_K \| e_\rho \|_0^2 - C \| e \|_0 \| (\pi^h e)_\rho \|_0 
\geq \tfrac12 \sigma_K \|  e_\rho \|_0^2 - C \|  e \|_0 
\bigl( \|  e_\rho \|_0 + \|  x_\rho - (\pi^h  x)_\rho \|_0 \bigr) \nonumber \\
& \geq \tfrac14 \sigma_K \|  e_\rho \|_0^2 - C_1 \|  e \|_0^2 - Ch^2 ,
\end{align*}
and hence
\begin{equation} \label{eq:h1est}
\|  e \|_1^2 
= \|  e \|_0^2 + \|  e_\rho \|_0^2 
\leq \| e \|_0^2 + \frac4{\sigma_K}\left[\mu(t) + C_1 \|  e \|_0^2 +
Ch^2\right]
\leq \frac{4}{\sigma_K} \bigl( C_2 \|  e \|_0^2 + \mu(t) \bigr) + C h^2,
\end{equation}
where $C_2= C_1 + \frac{\sigma_K}{4}$. 
Integrating (\ref{eq:err5}) with respect to time, and observing (\ref{eq:h1est}) as well as $\mu(0) \leq C h^2$, we derive
\begin{align*} 
&
\tfrac12 \widetilde c_0 \int_0^t \|  e_t \|_0^2 \ds 
+ \bigl( \mu(t) + C_2 \|  e(t) \|_0^2 \bigr) \nonumber \\ & \quad
\leq C \int_0^t (1 + \|  x_{h,t} \|_{0,\infty} ) \|  e \|_1^2 \ds
+ 2 C_2 \int_0^t \|  e \|_0 \|  e_t \|_0 \ds +  C h^2  
\nonumber \\ & \quad
\leq C \int_0^t (1 + \|  x_{h,t} \|_{0,\infty} ) \bigl( \mu(s) + C_2 \|  e \|_0^2  \bigr) \ds + \tfrac14 \widetilde c_0 \int_0^t \|  e_t \|_0^2 \ds
+ C \int_0^t \|  e \|_0^2 \ds + C h^2,
\end{align*}
and hence
\begin{displaymath}
\mu(t) + C_2 \| e(t) \|_0^2  \leq Ch^2 + C \int_0^t (1 + \|  x_{h,t} \|_{0,\infty} ) \bigl( \mu(s) + C_2 \|  e \|_0^2  \bigr) \ds, \quad
0 \leq t < \hat T_h.
\end{displaymath}
Since $\int_0^{\hat T_h} \|  x_{h,t} \|_{0,\infty} \ds \leq 2 C_0$ by
definition, 
we deduce with the help of Gronwall's inequality and (\ref{eq:h1est}) that
\begin{equation} \label{eq:GW}
\int_0^{\hat T_h} \|  e_t \|_0^2 \ds 
+ \sup_{0 \leq s \leq \hat T_h} \|  e(s) \|_1^2 \leq C h^2.
\end{equation}
In particular, we have on recalling \eqref{eq:sobolev} that
\begin{displaymath}
\| x(\cdot,\hat T_h)- x_h(\cdot,\hat T_h) \|_{0,\infty} =  \|  e(\cdot,\hat T_h) \|_{0,\infty} \leq C \|  e(\cdot,\hat T_h)
\|_1 \leq Ch \leq \tfrac12 \delta,
\end{displaymath}
provided that $0<h \leq h_0$. 
Next, we have from \eqref{eq:estpih}, \eqref{eq:inverse}, \eqref{eq:ide} 
and \eqref{eq:GW} that
\begin{align*}
&
\| ( x_{\rho} -  x_{h,\rho})(\cdot,\hat T_h) \|_{0,\infty} 
\leq \| ( x_\rho - (\pi^h x)_\rho )(\cdot,\hat T_h) \|_{0,\infty} 
+ \| ( (\pi^h x)_\rho - x_{h,\rho} )(\cdot,\hat T_h) \|_{0,\infty}
\\ & \qquad 
\leq Ch + Ch^{-\frac{1}{2}} \| (x_{h,\rho} - (\pi^h x)_\rho)(\cdot,\hat T_h) \|_0 
\leq C h^{\frac{1}{2}} + C h^{-\frac{1}{2}} \| e_\rho(\cdot,\hat T_h) \|_0
\leq C h^{\frac{1}{2}},
\end{align*}
and similarly by \eqref{eq:regul} that
\begin{displaymath}
\int_0^{\hat T_h} \| x_{h,t} \|_{0,\infty} \ds \leq \int_0^{\hat T_h} \bigl( \| x_t \|_{0,\infty} \ds +  \| e_t \|_{0,\infty} \bigr) \ds \leq C_0+ 
C h^{-\frac{1}{2}} \int_0^{\hat T_h} \|  e_t \|_0 \ds \leq C_0 + C h^{\frac{1}{2}}.
\end{displaymath}
By choosing $h_0$ smaller if necessary we may therefore assume that 
$\| (x_{\rho} - x_{h,\rho})(\cdot,\hat T_h) \|_{0,\infty} 
\leq \frac{1}{4} c_0$
and $\int_0^{\hat T_h} \| x_{h,t} \|_{0,\infty} \ds \leq \frac{3}{2} C_0$. 
Suppose that $\hat T_h <T$. 
Then there exists an $\epsilon>0$ such that $x_h$ exists on
$[0,\hat T_h + \epsilon]$ with
$\| (x-x_h)(\cdot,t) \|_{0,\infty} \leq \delta$, 
$\| (x_{\rho} - x_{h,\rho})(\cdot,t) \|_{0,\infty} \leq \frac12 c_0$, 
for $0 \leq t \leq \hat T_h+\epsilon$,
and $\int_0^{\hat T_h + \epsilon} \|  x_{h,t} \|_{0,\infty} \ds \leq 2 C_0$, 
contradicting the definition of $\hat T_h$. 
Thus $\hat T_h =T$ and the theorem is proved. 
\end{proof}

\setcounter{equation}{0}
\section{Fully discrete schemes} \label{sec:fds}

{From} now on, let the $L^2$--inner product on $I$ be denoted by 
$(\cdot,\cdot)$. Due to the nonlinearities present in \eqref{eq:Hxtdiscrete},
for a fully practical scheme we need to introduce numerical quadrature. For our
purposes it is sufficient to consider classical mass lumping.
Hence for two piecewise continuous functions, with possible jumps at the 
nodes $\{q_j\}_{j=1}^J$, we define the mass lumped $L^2$--inner product 
$(u,v)^h$ via
\begin{equation}
( u, v )^h = \tfrac12\sum_{j=1}^J h_j
\left[(uv)(q_j^-) + (uv)(q_{j-1}^+)\right],
\label{eq:ip0}
\end{equation}
where $u(q_j^\pm)=\underset{\delta\searrow 0}{\lim}\ u(q_j\pm\delta)$.
The definition (\ref{eq:ip0}) naturally extends to vector valued functions.

In particular, we will consider fully discrete approximations of
\begin{equation} \label{eq:Hxtdiscreteh}
\left( H(x_h, x_{h,\rho}) x_{h,t}, \eta_h \right)^h
+ \left( \Phi_p(x_h, x_{h,\rho}) , \eta_{h,\rho} \right)^h 
+ \left( \Phi_z(x_h,  x_{h,\rho}) , \eta_h \right)^h = 0 \qquad
\forall\ \eta_h \in \Vh
\end{equation}
in place of \eqref{eq:Hxtdiscrete}. Using this quadrature does not affect
our derived error estimate \eqref{eq:errest}, 
as can be shown with standard techniques.

In order to discretize \eqref{eq:Hxtdiscreteh} 
in time, let $t_m=m\Delta t$, $m=0,\ldots,M$, 
with the uniform time step size $\Delta t = \frac TM >0$. 
In the following, we let $x^0_h = x_h(\cdot,0) = \pi^h x_0 \in \Vh$ 
and for $m=0,\ldots,M-1$ let $ x^{m+1}_h \in \Vh$ be the solution of a system
of algebraic equations, which we will specify. In general, we will attempt to
define fully discrete approximations that are unconditionally stable, in the
sense that they satisfy the following discrete analogue of \eqref{eq:stab},
\begin{equation} \label{eq:fdstab}
 \left( \Phi( x^k_h,  x^k_{h,\rho}) 
- \Phi( x^0_h, x^0_{h,\rho}) ,1 \right)^h \leq
-\Delta t \sum_{m = 0}^{k-1} \left( H(x^m_h,x^m_{h,\rho}) \frac{x^{m+1}_h - x^m_h}{\Delta t}, \frac{x^{m+1}_h - x^m_h}{\Delta t} \right)^h
 \leq 0,
\end{equation}
for $k = 1,\ldots,M$. Here, the second inequality is a consequence of \eqref{eq:Hposdef}.

\subsection{Space-independent anisotropic curve shortening flow}
Let $\gamma(z,p)=\gamma_0(p)$ be an anisotropy function and 
$\Phi_0(p)=\frac{1}{2} \gamma_0^2(p)$. Using Example~\ref{ex:2}\ref{item:ex2b}
we propose the scheme
\begin{equation} \label{eq:fdani}
\frac{1}{\Delta t} 
\left(H_0(x^m_{h,\rho}) ( x^{m+1}_h- x^m_h) , 
\eta_h \right)^h 
+ \left(\Phi_0'(x^{m+1}_{h,\rho}) , \eta_{h,\rho} \right)
=0 \qquad \forall\ \eta_h \in \Vh,
\end{equation}
where $H_0$ is defined in \eqref{eq:H0}. 
We remark that in the isotropic case the scheme \eqref{eq:fdani} is linear and
collapses to the fully discrete approximation 
in \cite[p.\ 108]{DeckelnickD95}.

\begin{lemma} 
A solution $(x^m_h)_{m=0}^M$ to \eqref{eq:fdani} satisfies the stability bound \eqref{eq:fdstab}.
\end{lemma}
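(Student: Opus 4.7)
The plan is to test the fully discrete equation with $\eta_h = x^{m+1}_h - x^m_h$ and exploit the convexity of $\Phi_0$ together with the positive definiteness of $H_0$ recorded in \eqref{eq:Hposdef}. Choosing this test function in \eqref{eq:fdani} produces
\begin{equation*}
\left(H_0(x^m_{h,\rho}) \tfrac{x^{m+1}_h - x^m_h}{\Delta t}, \tfrac{x^{m+1}_h - x^m_h}{\Delta t}\right)^h \Delta t
+ \left(\Phi_0'(x^{m+1}_{h,\rho}), x^{m+1}_{h,\rho} - x^m_{h,\rho}\right) = 0 .
\end{equation*}
Here no quadrature error appears in the second inner product because $x^{m+1}_{h,\rho}$, $x^m_{h,\rho}$ and hence $\Phi_0'(x^{m+1}_{h,\rho}) \cdot (x^{m+1}_{h,\rho} - x^m_{h,\rho})$ are all piecewise constant on the partition.

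The next step is to invoke convexity. Since $\Phi_0(p) = \tfrac12 \gamma_0^2(p)$ is convex in $p$ (this is the spatial specialization of the assumption underlying \eqref{eq:ellipt}, applied with $a \equiv 1$), the supporting hyperplane inequality gives
\begin{equation*}
\Phi_0'(x^{m+1}_{h,\rho}) \cdot (x^{m+1}_{h,\rho} - x^m_{h,\rho})
\geq \Phi_0(x^{m+1}_{h,\rho}) - \Phi_0(x^m_{h,\rho})
\end{equation*}
pointwise on each element. Integrating yields
\begin{equation*}
\left(H_0(x^m_{h,\rho}) \tfrac{x^{m+1}_h - x^m_h}{\Delta t}, \tfrac{x^{m+1}_h - x^m_h}{\Delta t}\right)^h \Delta t
+ \left(\Phi_0(x^{m+1}_{h,\rho}) - \Phi_0(x^m_{h,\rho}), 1\right) \leq 0 .
\end{equation*}
Summing this telescoping inequality over $m = 0, \ldots, k-1$ produces the middle expression of \eqref{eq:fdstab}, where I use that $\Phi_0 \circ x^k_{h,\rho}$ is piecewise constant, so $(\Phi_0(x^k_{h,\rho}),1) = (\Phi_0(x^k_{h,\rho}),1)^h$ and likewise at $m=0$, and recall that in the space-independent case $\Phi(z,p) = \Phi_0(p)$. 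The second inequality in \eqref{eq:fdstab} is then immediate from \eqref{eq:Hposdef}, which guarantees that the quadratic form $w \mapsto H_0(p) w \cdot w$ is non-negative, so that the mass-lumped sum of these quadratic forms over the nodes is non-negative.

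No genuine obstacle arises: the only points that require care are (i) verifying that mass lumping and exact integration coincide on the terms involving piecewise-constant integrands, and (ii) applying the convexity inequality with the correct endpoints so that the resulting differences telescope. The implicit treatment of $\Phi_0'$ (evaluated at $x^{m+1}_{h,\rho}$ rather than $x^m_{h,\rho}$) is precisely what makes the convexity argument yield a sign and thus unconditional stability.
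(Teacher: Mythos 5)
Your proof is correct and follows essentially the same route as the paper: the paper's one-line argument is to test with $\eta_h = x^{m+1}_h - x^m_h$, invoke the convexity inequality \eqref{eq:lambdaL} (whose content, after dropping the nonnegative quadratic remainder, is exactly your supporting-hyperplane inequality with the same endpoints), and sum over $m$. Your additional remarks on the piecewise-constant integrands and the sign of the $H_0$ quadratic form via \eqref{eq:Hposdef} are accurate elaborations of details the paper leaves implicit.
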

\begin{proof}
Choose $\eta_h =  x^{m+1}_h -  x^m_h$ in \eqref{eq:fdani},  use
\eqref{eq:lambdaL} and sum for $m = 0$ to $k-1$.
\end{proof}

A disadvantage of the scheme \eqref{eq:fdani} is that at each time level a 
nonlinear system of equations needs to be solved. Following the approach in \cite{DeckelnickD02}, see also \cite{Pozzi07}, one could alternatively consider a linear scheme 
by introducing a suitable stabilization term and treating the elliptic term
in \eqref{eq:fdani} fully explicitly.

If we restrict our attention to a special class of anisotropies, then a
linear and unconditionally stable approximation can be introduced that does 
not rely on a stabilization term. This idea goes back to \cite{triplejANI}, and was extended
to the phase field context in \cite{eck}. 
In fact, a wide class of anisotropies can either be modelled or at least very
well approximated by
\begin{equation}
\gamma_0(p) = 
\sum_{\ell=1}^L \sqrt{\Lambda_{\ell} p \cdot  p} , \label{eq:BGNg}
\end{equation}
where $\Lambda_{\ell} \in \bR^{2\times 2}$, 
$\ell=1,\ldots, L$, are symmetric and positive definite, see
\cite{triplejANI,ani3d}. Hence the assumption \eqref{eq:sconv} is satisfied.
In order to be able to apply the ideas in \cite{eck,vch}, we 
define the auxiliary function $\phi_0(p)=\gamma_0(p^\perp)$, so that
$\Phi_0(p)=\frac{1}{2} \gamma^2_0(p^\perp)=\frac{1}{2} \phi_0^2(p)$.
Observe that $\phi_0$ also falls within the class of densities of the form
\eqref{eq:BGNg}, i.e.\ 
\begin{equation*} 
\phi_0(p)=\gamma_0(p^\perp)= \sum_{\ell=1}^L \sqrt{\widetilde \Lambda_{\ell} p \cdot  p}, \quad \mbox{ where }
\widetilde\Lambda_\ell = \det(\Lambda_\ell) \Lambda_\ell^{-1} .
\end{equation*}
Moreover, we recall from \cite{eck,vch} that $\Phi_0'(p) = B(p) p$,
if we introduce the matrices
\begin{equation} \label{eq:BGNB}
B(p) =
\begin{cases}
\gamma_0(p^\perp) \displaystyle\sum_{\ell =1}^L 
\frac{\widetilde\Lambda_\ell}{\sqrt{\widetilde\Lambda_{\ell} p \cdot p}}
& p \neq 0, \\
L \displaystyle\sum_{\ell =1}^L \widetilde\Lambda_\ell & p =0.
\end{cases}
\end{equation}
On recalling once again the definition of $H_0$ from \eqref{eq:H0},
we then consider the scheme
\begin{equation} \label{eq:fdbgn}
\frac{1}{\Delta t} \left( H_0(x^m_{h,\rho}) (x^{m+1}_h- x^m_h) ,
\eta_h \right)^h 
+ \left( B(x^m_{h,\rho})  x^{m+1}_{h,\rho} , \eta_{h,\rho} \right)
=0 \qquad \forall\ \eta_h \in \Vh,
\end{equation}
which is inspired by the treatment of the anisotropy in \cite{eck,vch} 
and leads to a system of linear equations. We note that for the case
$L=1$ the two schemes \eqref{eq:fdbgn} and \eqref{eq:fdani} are identical.

\begin{lemma} \label{lem:bgn}
Suppose that $x^m_h\in\Vh$ with 
$x^m_{h,\rho} \neq 0$ in $I$. Then there exists a unique solution $x^{m+1}_h\in\Vh$ to \eqref{eq:fdbgn}. 
Moreover, a solution $(x^m_h)_{m=0}^M$ to \eqref{eq:fdbgn} satisfies the stability bound \eqref{eq:fdstab}.
\end{lemma}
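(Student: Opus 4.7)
The plan is to split the lemma into two independent claims and address them in order.

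Existence and uniqueness. Since the equations \eqref{eq:fdbgn} form a square linear system for the nodal values $\{x^{m+1}_h(q_j)\}_{j=1}^J$, it is enough to establish injectivity. Suppose $z_h \in \Vh$ is a solution of the homogeneous problem, i.e.
\begin{equation*}
\frac{1}{\Delta t}\left(H_0(x^m_{h,\rho}) z_h, \eta_h\right)^h
+ \left(B(x^m_{h,\rho}) z_{h,\rho}, \eta_{h,\rho}\right) = 0 \qquad \forall\ \eta_h \in \Vh,
\end{equation*}
and test with $\eta_h=z_h$. The matrices $B(x^m_{h,\rho})$ are positive definite on each element as positive combinations of symmetric positive definite matrices, so the second term is nonnegative. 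By \eqref{eq:Hposdef} in the spatially homogeneous case, the assumption $x^m_{h,\rho} \neq 0$ on $I$ forces $H_0(x^m_{h,\rho})$ to be strictly positive definite everywhere on $I$, so the mass-lumped first term is a sum of nonnegative contributions of the form $\tfrac12 h_j \, [H_0(p) z_h(q_j) \cdot z_h(q_j)]$. Together these force $z_h(q_j)=0$ at every node, hence $z_h \equiv 0$.

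Unconditional stability. Choose $\eta_h = x^{m+1}_h - x^m_h$ in \eqref{eq:fdbgn}. The $H_0$ term is manifestly $\geq 0$ and supplies the right hand side of \eqref{eq:fdstab}. To handle the $B$ term, I would prove the pointwise BGN-type convexity inequality
\begin{equation} \label{eq:bgnkey}
B(p) q \cdot (q-p) \geq \Phi_0(q) - \Phi_0(p) \qquad \forall\ p \in \bR^2\setminus\{0\},\ q \in \bR^2.
\end{equation}
Granted \eqref{eq:bgnkey} and using that $x^m_{h,\rho}$, $x^{m+1}_{h,\rho}$ and hence $\Phi_0(x^m_{h,\rho})$, $\Phi_0(x^{m+1}_{h,\rho})$ are all piecewise constant (so that the lumped and unlumped inner products agree for these quantities), integration elementwise yields
\begin{equation*}
\left(B(x^m_{h,\rho}) x^{m+1}_{h,\rho}, x^{m+1}_{h,\rho} - x^m_{h,\rho}\right)
\geq \left(\Phi_0(x^{m+1}_{h,\rho}) - \Phi_0(x^m_{h,\rho}),1\right)^h.
\end{equation*}
Summing for $m=0,\ldots,k-1$ telescopes the right hand side and delivers the first inequality in \eqref{eq:fdstab}; the second is the positive (semi)definiteness of $H_0$ already recorded in \eqref{eq:Hposdef}.

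It remains to justify \eqref{eq:bgnkey}, which is the crux of the argument and the only step beyond routine bookkeeping. Using $\phi_0(p)=\gamma_0(p^\perp)=\sum_\ell \sqrt{\widetilde\Lambda_\ell p \cdot p}$ and the definition \eqref{eq:BGNB}, a first Cauchy--Schwarz application gives
\begin{equation*}
\phi_0^2(q) = \Bigl(\sum_\ell \sqrt{\widetilde\Lambda_\ell q\cdot q}\Bigr)^{\!2}
\leq \Bigl(\sum_\ell \sqrt{\widetilde\Lambda_\ell p\cdot p}\Bigr)
\Bigl(\sum_\ell \tfrac{\widetilde\Lambda_\ell q\cdot q}{\sqrt{\widetilde\Lambda_\ell p\cdot p}}\Bigr)
= B(p)q\cdot q,
\end{equation*}
so $B(p)q\cdot q \geq 2\Phi_0(q)$. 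A second Cauchy--Schwarz on $\widetilde\Lambda_\ell p \cdot q \leq \sqrt{\widetilde\Lambda_\ell p\cdot p}\sqrt{\widetilde\Lambda_\ell q \cdot q}$ followed by AM--GM gives
\begin{equation*}
B(p)p \cdot q \leq \phi_0(p)\phi_0(q) \leq \tfrac12(\phi_0^2(p)+\phi_0^2(q)) = \Phi_0(p)+\Phi_0(q).
\end{equation*}
Combining the two, and using the symmetry of $B(p)$, yields $B(p)q\cdot(q-p) = B(p)q\cdot q - B(p)p\cdot q \geq 2\Phi_0(q) - \Phi_0(p) - \Phi_0(q) = \Phi_0(q)-\Phi_0(p)$, which is \eqref{eq:bgnkey}. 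The main obstacle is indeed this step: it is the nonstandard algebraic inequality that makes the semi-implicit linearization \eqref{eq:fdbgn} unconditionally stable despite the explicit treatment of $B$.
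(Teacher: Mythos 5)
Your proposal is correct and follows essentially the same route as the paper: existence via uniqueness of the homogeneous linear system using the positive definiteness of $B$ and \eqref{eq:Hposdef}, and stability via testing with $x^{m+1}_h - x^m_h$ together with the convexity-type inequality $B(p)q\cdot(q-p) \geq \Phi_0(q) - \Phi_0(p)$. The only difference is that the paper imports this key inequality from Corollary~2.3 of \cite{eck}, whereas you prove it directly by two applications of Cauchy--Schwarz; your derivation is correct and is essentially the standard argument behind that cited result.
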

\begin{proof} Existence follows from uniqueness, and so we consider the homogeneous system:
Find $ X_h \in \Vh$ such that
\[
\frac{1}{\Delta t} \left( H_0(x^m_{h,\rho})  X_h , \eta_h
\right)^h 
+  \left( B(x^m_{h,\rho}) X_{h,\rho} , \eta_{h,\rho} \right) =0 
\qquad \forall\ \eta_h \in \Vh.
\]
Choosing $\eta_h =  X_h$, and observing that the matrices $B(p)$ 
are positive definite, we obtain
\[
0 = \left( H_0(x^m_{h,\rho})  X_h ,  X_h \right)^h 
+ \Delta t \left( B(x^m_{h,\rho}) X_{h,\rho}, X_{h,\rho} \right) 
\geq \left( H_0(x^m_{h,\rho})  X_h ,  X_h \right)^h 
\]
which implies that $X_h = 0$ in view of \eqref{eq:Hposdef}. 
This proves the existence of a unique solution. In order to show the stability bound,
we recall from Corollary 2.3 in \cite{eck} 
that with $B$ as defined in \eqref{eq:BGNB}, it holds that
\begin{equation*} 
B(q) p \cdot (p - q) \geq \Phi_0(p) - \Phi_0(q)
\qquad \forall\  p, q \in \bR^2.
\end{equation*}
Hence choosing $\eta_h =  x^{m+1}_h -  x^m_h$ in \eqref{eq:fdbgn} and summing 
for $m = 0$ to $k-1$ yields \eqref{eq:fdstab}.
\end{proof}

\subsection{Curve shortening flow in Riemannian manifolds}
Let us denote by $\mbox{Sym}(2,\bR)$ the set of symmetric $2 \times 2$ matrices over $\bR$. For $A,B \in \mbox{Sym}(2,\bR)$ we define
$A \succcurlyeq B$ if and only if $A-B$ is positive semidefinite. 
We say that a differentiable function $f: \Omega \rightarrow \mbox{Sym}(2,\bR)$ is convex if
\begin{displaymath}
f(w) \succcurlyeq f(z) + (w_i - z_i) f_{z_i}(z) \quad \mbox{ for all }  z, w
\in \Omega \mbox{ such that } [z,w] \subset \Omega. 
\end{displaymath}

We now consider the situation in Example~\ref{ex:2}\ref{item:ex2c}.
Let $G: \Omega \rightarrow \mbox{Sym}(2,\bR)$, so that $\Phi(z,p)=\frac{1}{2}
G(z) p \cdot p$, where $G(z)$ is positive definite for $z \in \Omega$.
In order to obtain an unconditionally stable scheme, we adapt an idea from 
\cite{hypbol} and assume that we can split $G$ into 
\begin{equation} \label{eq:splitG}
G=G_+ + G_- \quad \mbox{ such that } 
\pm G_\pm : \Omega \rightarrow \mbox{Sym}(2,\bR) \mbox{ are convex}.
\end{equation}
Such a splitting exists if there exists a constant $c_G \in \bRgeq$ such that
\begin{displaymath}
\tfrac12 \lambda_i \lambda_j G_{z_i z_j}(z) + c_G |\lambda|^2\Id \succcurlyeq 0 
\qquad \forall\ z \in \Omega,\ \lambda \in \bR^2.
\end{displaymath}
In that case one may choose $G_+(z)=G(z) + c_G |z|^2\Id$ and 
$G_-(z) = -c_G |z|^2\Id$.
It follows from (\ref{eq:splitG}) that
\begin{equation} \label{eq:splitGstab}
(w_i - z_i) \bigl( G_{+,z_i}(w) + G_{-,z_i}(z) \bigr) \succcurlyeq G(w) - G(z) 
\qquad \forall\  w, z \in \Omega\
\mbox{ such that } [z,w] \subset \Omega.
\end{equation}
We now consider the scheme
\begin{align} \label{eq:fdriem}
& \frac{1}{\Delta t} \left( H(x^m_h,x^m_{h,\rho}) 
( x^{m+1}_h- x^m_h) , \eta_h \right)^h 
+ \left( G(x^m_h)  x^{m+1}_{h,\rho} , \eta_{h,\rho} \right)^h 
\nonumber \\ & \qquad 
+ \tfrac{1}{2} \left( 
\eta_{h,i} \bigl( G_{+,z_i}(x^{m+1}_h) + G_{-,z_i}( x^m_h) \bigr) 
x^{m+1}_{h,\rho} ,  x^{m+1}_{h,\rho} \right)^h =0 
\qquad \forall\ \eta_h \in \Vh,
\end{align}
where $H$ is as defined in \eqref{eq:Hriem}. We note that in general 
\eqref{eq:fdriem} is a nonlinear scheme. 

\begin{lemma} 
Let $(x^m_h)_{m=0}^M$ be a solution to
\eqref{eq:fdriem},
with $[x^m_h(q_j), x^{m+1}_h(q_j)] \subset \Omega$ for 
$j=1,\ldots,J$ and $m=0,\ldots,M-1$. Then 
the stability bound \eqref{eq:fdstab} is satisfied.
\end{lemma}
\begin{proof}
Let us again choose $\eta_h = x^{m+1}_h - x^m_h$ in \eqref{eq:fdriem} and calculate
with the help of \eqref{eq:splitGstab}
\begin{align*}
& \left( G( x^m_h)  x^{m+1}_{h,\rho}, 
 x^{m+1}_{h,\rho} - x^m_{h,\rho} \right)^h  + \tfrac{1}{2} \left( (x^{m+1}_{h,i} - x^m_{h,i}) 
\bigl( G_{+,z_i}( x^{m+1}_h) + G_{-,z_i}(x^m_h) \bigr) 
 x^{m+1}_{h,\rho} ,  x^{m+1}_{h,\rho} \right)^h \\ & \quad
\geq  \left( G(x^m_h)  x^{m+1}_{h,\rho} ,  x^{m+1}_{h,\rho} \right)^h
- \left( G(x^m_h)  x^{m+1}_{h,\rho} ,  x^m_{h,\rho} \right)^h 
+ \tfrac{1}{2} \left( \bigl( G(x^{m+1}_h) - G(x^m_h) \bigr) 
 x^{m+1}_{h,\rho} ,  x^{m+1}_{h,\rho} \right)^h \\ & \quad
=  \tfrac12 \left ( G(x^m_h) (x^{m+1}_{h,\rho} -  x^m_{h,\rho}) ,
 x^{m+1}_{h,\rho} -  x^m_{h,\rho} \right)^h 
+ \left(\Phi(x^{m+1}_h,x^{m+1}_{h,\rho}) ,1 \right)^h 
- \left(\Phi(x^m_h,x^m_{h,\rho}) , 1 \right)^h  \\ & \quad 
\geq  \left(\Phi(x^{m+1}_h, x^{m+1}_{h,\rho}) ,1 \right)^h 
- \left(\Phi(x^m_h, x^m_{h,\rho}) , 1 \right)^h,
\end{align*}
since $G(x^m_h)$ is symmetric and positive definite.
This yields the desired result similarly to the proof of Lemma~\ref{lem:bgn}.
\end{proof}

In the special case that the Riemannian manifold is conformally equivalent 
to the Euclidean plane, i.e.\ when $G(z) = \altg(z) \Id$
for $\altg : \Omega \to \bRplus$, several
numerical schemes have been proposed in \cite[\S3.1]{hypbol}. 
In this situation our fully discrete approximation \eqref{eq:fdriem} 
collapses to the new scheme
\begin{align} \label{eq:fdhypbol}
& \frac1{\Delta t} \left( \altg^2(x_h^m) 
( x^{m+1}_h -  x^m_h) , \eta_h | x^m_{h,\rho}|^2 \right)^h 
+ \left( \altg(x_h^{m}) x^{m+1}_{h,\rho}, \eta_{h,\rho} \right)^h
\nonumber \\ & \qquad
+ \tfrac12 \left( (\nabla \altg_+ (x_h^{m+1}) + 
\nabla \altg_- (x_h^{m})), \eta_h | x^{m+1}_{h,\rho}|^2 \right)^h
= 0 \qquad \forall\ \eta_h \in \Vh, 
\end{align}
where $\altg=\altg_+ + \altg_-$ and $\pm \altg_\pm: \Omega \rightarrow \bR$ are convex.
We observe that for the special case $\altg(z) = (z_1)^2$ the scheme 
\eqref{eq:fdhypbol} is in fact very close to the approximation 
\cite[(4.4)]{schemeD}, 
modulo the different time scaling factor that arises in the context of
mean curvature flow for axisymmetric hypersurfaces in $\bR^3$ considered there.

\setcounter{equation}{0}
\section{Numerical results} \label{sec:nr}

We implemented our fully discrete schemes within the
finite element toolbox Alberta, \cite{Alberta}. Where the systems of 
equations arising at each time level are nonlinear, they
are solved using a Newton method or a Picard-type iteration,
while all linear (sub)problems are solved
with the help of the sparse factorization package UMFPACK, see \cite{Davis04}.
For example, for the solution of \eqref{eq:fdani} we employ a Newton iteration,
while the Picard iteration for the solution of
\eqref{eq:fdriem} is defined through $x^{m+1,0}_h = x^m_h$ and, for $i \geq 0$,
by $x^{m+1,i+1}_h \in \Vh$ such that
\begin{align} \label{eq:Picard}
& \frac{1}{\Delta t} \left( H(x^m_h,x^m_{h,\rho}) 
( x^{m+1,i+1}_h- x^m_h) , \eta_h \right)^h 
+ \left( G(x^m_h)  x^{m+1,i+1}_{h,\rho} , \eta_{h,\rho} \right)^h 
\nonumber \\ & \qquad 
+ \tfrac{1}{2} \left( 
\eta_{h,i} \bigl( G_{+,z_i}(x^{m+1,i}_h) + G_{-,z_i}( x^m_h) \bigr) 
x^{m+1,i}_{h,\rho} ,  x^{m+1,i}_{h,\rho} \right)^h =0 
\qquad \forall\ \eta_h \in \Vh.
\end{align}
In all our simulations the Newton solver for \eqref{eq:fdani} converged in at
most one iteration, while the Picard iteration \eqref{eq:Picard} always
converged in at most three iterations.

For all our numerical simulations we use a uniform partitioning of $[0,1]$,
so that $q_j = jh$, $j=0,\ldots,J$, with $h = \frac 1J$. Unless otherwise
stated, we use $J=128$ and $\Delta t = 10^{-4}$.
On recalling (\ref{eq:aniso}), for $\chi_h \in \Vh$ we define the discrete
energy
\begin{equation*} 
\mathcal{E}^{h}(\chi_h) = 
\left(\altm(\chi_h), \gamma(\chi_h, \chi_{h,\rho}^\perp)\right)^{h} .
\end{equation*}
We also consider the ratio
\begin{equation} \label{eq:ratio}
\ratio^m = \dfrac{\max_{j=1,\ldots,J} |x_h^m(q_j) - x_h^m(q_{j-1})|}
{\min_{j=1,\ldots, J} |x_h^m(q_j) - x_h^m(q_{j-1})|}
\end{equation}
between the longest and shortest element of $\Gamma^m_h = x_h^m(I)$, 
and are often interested in the evolution of this ratio over time.
We stress that no redistribution of vertices was necessary during any of our
numerical simulations.
In the isotropic case this can be explained by the diffusive character of the
tangential motion induced by \eqref{eq:csfdeTurck}, since the flow can be
rewritten as $x_t = \varkappa\nu - (\frac1{|x_\rho|})_\rho\tau$,
as has been pointed out in e.g.\ \cite[p.~1477]{MikulaS01}. 
Our numerical experiments indicate that while the induced tangential motion
from \eqref{eq:Hxt} may in general not be diffusive, it is sufficiently
benevolent to avoid coalescence of vertices in practice.

\subsection{Space-independent anisotropic curve shortening flow}
In this subsection, we consider the situation from
Example~\ref{ex:1}\ref{item:ex1b}, see also 
Example~\ref{ex:2}\ref{item:ex2b}. 
Anisotropies of the form $\gamma(z,p) = \gamma_0(p)$
can be visualized by their Frank diagram
$\mathcal F = \{ p\in\bR^2 : \gamma_0(p) \leq 1 \}$
and their Wulff shape
$\mathcal W = \{ q\in\bR^2 : \gamma^*_0(q) \leq 1 \}$,
where $\gamma^*_0$ is the dual to $\gamma_0$ defined by 
\begin{equation*}
\gamma^*_0(q) = \sup_{p\in\bR^2\setminus\{0\}} \frac{p\cdot q}{\gamma_0(p)}.
\end{equation*}
We recall from \cite{FonsecaM91} that the boundary of the Wulff shape,
$\partial\mathcal W$, is the solution of the isoperimetric
problem for $\mathcal E(\Gamma) = \int_\Gamma \gamma_0(\nu) \dH1$.
Moreover, it was shown in \cite{Soner93} that self-similarly
shrinking boundaries of Wulff 
shapes are a solution to anisotropic curve shortening flow. In particular,
\begin{equation}
\Gamma(t) = \{ q \in \bR^2 : \gamma^*_0(q) = \sqrt{1 - 2t} \}
\label{eq:Wexact}
\end{equation}
solves \eqref{eq:acsf0}. 
We demonstrate this behaviour in Figure~\ref{fig:anibgnL1equid} for the
``elliptic'' anisotropy
\begin{equation} \label{eq:gamma0L1}
\gamma_0(p) = \sqrt{p_1^2 + \delta^2 p_2^2}, \quad \delta = 0.5.
\end{equation}
Observe that \eqref{eq:gamma0L1} is a special case of \eqref{eq:BGNg} 
with $L=1$, so that the scheme \eqref{eq:fdani} 
collapses to the linear scheme \eqref{eq:fdbgn}. 
The evolution in Figure~\ref{fig:anibgnL1equid} nicely shows how the curve 
shrinks self-similarly to a point.
We can also see that the scheme \eqref{eq:fdani} induces a
tangential motion that moves the initially equidistributed vertices along
the curve, so that eventually a higher density of vertices can be
observed in regions of larger curvature. We note that this behaviour is 
not dissimilar to the behaviour observed in the numerical experiments in 
\cite{triplejANI}. 

We now use the exact solution \eqref{eq:Wexact} to perform a convergence
experiment for our proposed finite element approximation 
\eqref{eq:Hxtdiscrete}. To this end, we choose the particular parameterization
\begin{equation} \label{eq:true}
x(\rho,t)=(1-2t)^{\frac12} \binom{\cos(2\pi\rho)}{\delta \sin(2\pi\rho)}
\end{equation}
and define
\begin{equation} \label{eq:f}
f = H_0(x_\rho) x_t - [\Phi_0'(x_\rho)]_\rho ,
\end{equation}
so that \eqref{eq:true} is the exact solution of \eqref{eq:ex2b} with the
additional right hand side $(f, \eta)$. Upon adding the right hand side
$(f(t_{m+1}), \eta_h)^h$ to \eqref{eq:fdani}, 
we can thus use
\eqref{eq:true} as a reference solution for a convergence experiment of our
proposed finite element approximation.
We report on the observed $H^1$-- and $L^2$--errors
for the scheme \eqref{eq:fdani} for a sequence of mesh sizes
in Table~\ref{tab:aMCtrue}.
Here we partition the time interval $[0,T]$, with $T=0.45$, into uniform
time steps of size $\Delta t = h^2$, for $h = J^{-1} = 2^{-k}$, 
$k=5,\ldots,9$. The observed numerical results confirm the optimal convergence 
rate for the $H^1$--error from Theorem~\ref{thm:errest}.
\begin{figure}
\center
\includegraphics[angle=-90,width=0.45\textwidth]{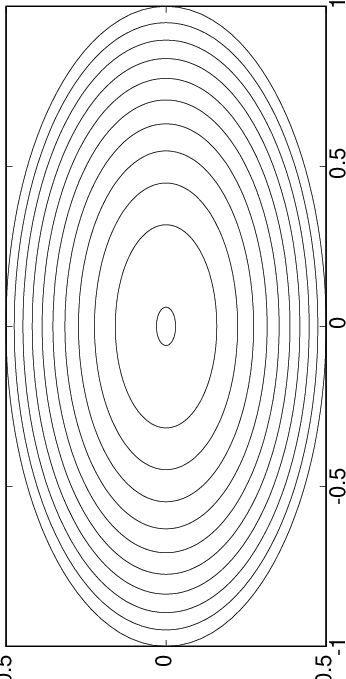}\qquad
\includegraphics[angle=-90,width=0.4\textwidth]{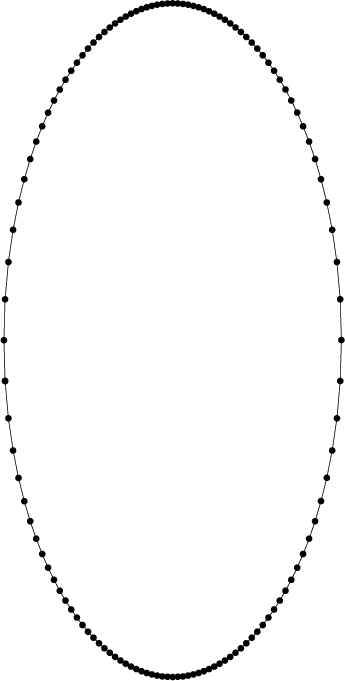}
\includegraphics[angle=-90,width=0.35\textwidth]{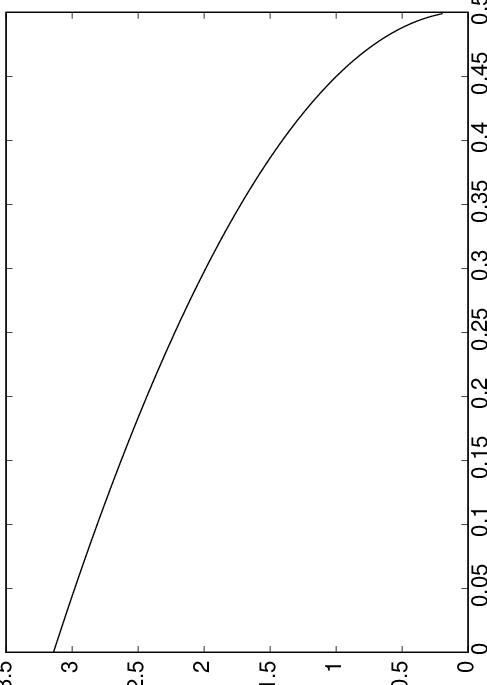}
\includegraphics[angle=-90,width=0.35\textwidth]{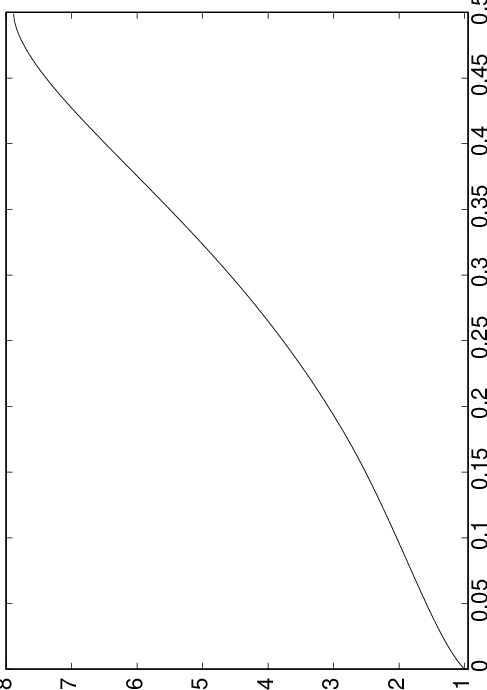}
\caption{Anisotropic curvature flow for \eqref{eq:gamma0L1} using the
scheme \eqref{eq:fdani}.
Solution at times $t=0,0.05,\ldots,0.45,0.499$, as well as the
distribution of vertices on $x_h^M(I)$.
Below we show a plot of the discrete energy $\mathcal{E}^h(x_h^m)$
and of the ratio (\ref{eq:ratio}) over time.} 
\label{fig:anibgnL1equid}
\end{figure}
\begin{table}
\center
\begin{tabular}{|r|c|c|c|c|}
\hline
$J$ & $\displaystyle\max_{m=0,\ldots,M} \| x(\cdot,t_m) -  x^m_h\|_0$ & EOC
& $\displaystyle\max_{m=0,\ldots,M} \| x(\cdot,t_m) -  x^m_h\|_1$ & EOC 
\\ \hline
32  & 1.2337e-02 & ---  & 2.8140e-01 & ---  \\
64  & 3.1870e-03 & 1.95 & 1.4076e-01 & 1.00 \\
128 & 8.0360e-04 & 1.99 & 7.0386e-02 & 1.00 \\
256 & 2.0133e-04 & 2.00 & 3.5194e-02 & 1.00 \\
512 & 5.0361e-05 & 2.00 & 1.7597e-02 & 1.00 \\
\hline
\end{tabular}
\caption{Errors for the convergence test for (\ref{eq:true})
over the time interval $[0,0.45]$ for the scheme \eqref{eq:fdani} 
with the additional right hand side $(f(t_{m+1}), \eta_h)^h$ from \eqref{eq:f}.
We also display the experimental orders of convergence (EOC).}
\label{tab:aMCtrue}
\end{table}%

Next we consider smooth anisotropies as in \cite[(7.1)]{Dziuk99}
and \cite[(4.4a)]{fdfi}. To this end, let
\begin{equation} \label{eq:gammaD99b}
\gamma_0( p) = | p| (1 + \delta \cos(k\theta(p))),
\quad  p = | p| \binom{\cos\theta(p)}{\sin\theta(p)},\quad 
k \in \bN,\ \delta \in \bRgeq.
\end{equation}
It is not difficult to verify that this anisotropy satisfies 
\eqref{eq:sconv} if and only if $\delta < \frac1{k^2 - 1}$,
see also \cite[p.~27]{fdfi}.
In order to visualize this family of anisotropies, we show a
Frank diagram and a Wulff shape for the cases $k=3$ and $k=6$, respectively, in 
Figure~\ref{fig:frankD99b}.
\begin{figure}
\center
\includegraphics[clip,angle=-0,width=0.2\textwidth]{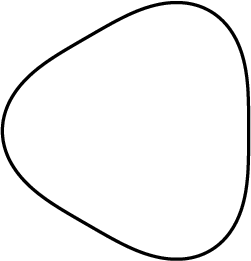}\quad
\includegraphics[clip,angle=-0,width=0.2\textwidth]{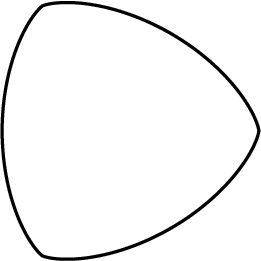}\qquad\qquad
\includegraphics[clip,angle=-0,width=0.2\textwidth]{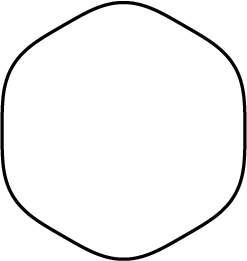}\quad
\includegraphics[clip,angle=-0,width=0.2\textwidth]{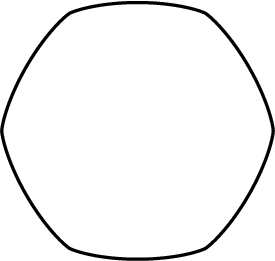}
\caption{Frank diagram (left) and Wulff shape (right) for 
\eqref{eq:gammaD99b} with $(k,\delta)=(3, 0.124)$ and $(6,0.028)$.}
\label{fig:frankD99b}
\end{figure}%
We show the evolutions for anisotropic curve shortening flow induced by 
these two anisotropies, starting in each case from an equidistributed 
approximation of a unit circle, in Figure~\ref{fig:ani2D99b}. 
Here we use the fully discrete scheme \eqref{eq:fdani}. 
We observe from the evolutions in Figure~\ref{fig:ani2D99b} that the shape of 
the curve quickly approaches the Wulff shape, while it continuously shrinks 
towards a point. 
It is interesting to note that the ratio \eqref{eq:ratio} increases only
slightly and then appears to remain nearly constant for the remainder of the
evolution.
\begin{figure}
\center
\mbox{
\includegraphics[clip,angle=-90,width=0.21\textwidth]{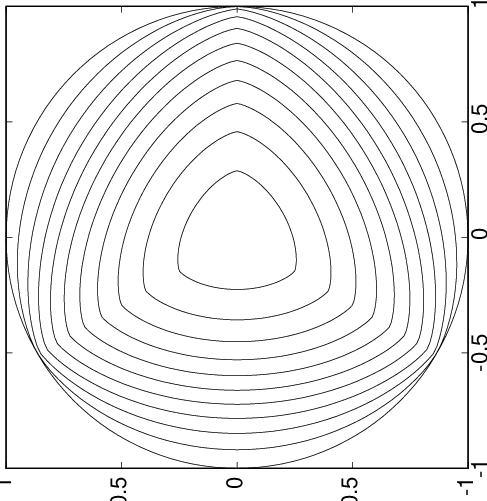}
\includegraphics[angle=-90,width=0.28\textwidth]{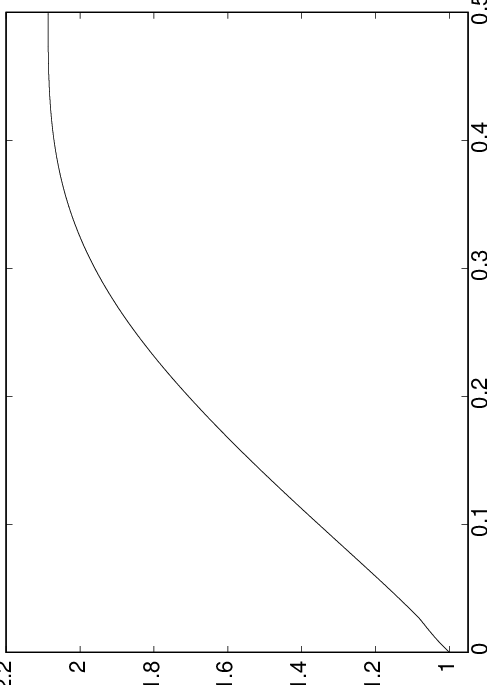}
\includegraphics[angle=-90,width=0.21\textwidth]{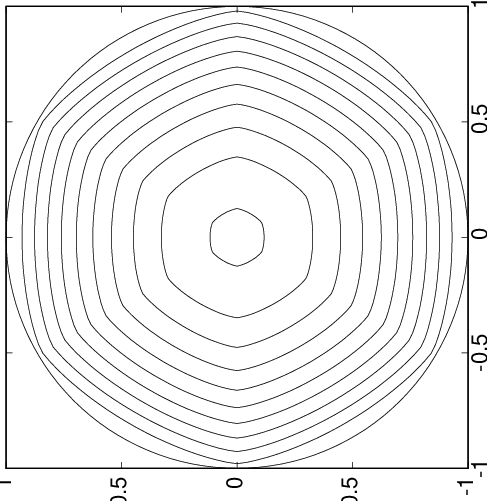}
\includegraphics[angle=-90,width=0.28\textwidth]{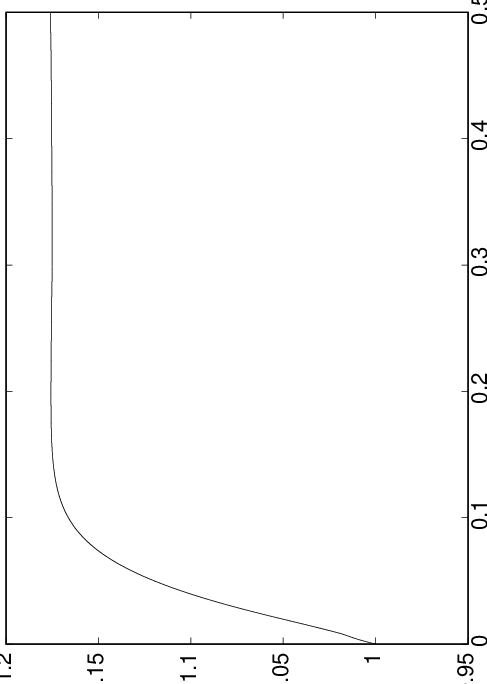}}
\caption{Anisotropic curvature flow for \eqref{eq:gammaD99b},
with $(k,\delta)=(3, 0.124)$ (left) and $(6,0.028)$ (right), using the 
scheme \eqref{eq:fdani}. 
Solution at times $t=0,0.05,\ldots,0.5$.
We also show plots 
of the ratio (\ref{eq:ratio}) over time.} 
\label{fig:ani2D99b}
\end{figure}

One motivation for choosing a sixfold anisotropy, as in \eqref{eq:gammaD99b} 
with $k=6$, is its relevance for modelling ice crystal growth, 
see e.g.\ \cite{dendritic,jcg}. Here it is desirable to choose a (nearly)
crystalline anisotropy, which means that the Wulff shape exhibits flat sides
and sharp corners.
With the help of the class of anisotropies \eqref{eq:BGNg}
this is possible. We immediately demonstrate how this can be achieved for
a general $k$-fold symmetry, for even $k\in\bN$. On choosing $L = k/2$,
we define the rotation matrix
$Q(\theta)=\binom{\phantom{-}\cos\theta\ \sin\theta}{-\sin\theta\ \cos\theta}$
and the diagonal matrix $D(\delta) = \diag(1,\delta^2)$, and then let
\begin{equation} \label{eq:gammabgnL}
\gamma_0(p) 
= \sum_{\ell=1}^L \sqrt{ [(Q(\tfrac\pi{L})^\ell]^T D(\delta) 
(Q(\tfrac\pi{L}))^\ell p \cdot p}, \quad \delta \in \bRplus.
\end{equation}
We visualize some Wulff shapes of \eqref{eq:gammabgnL} for $L=2,3,4$ in
Figure~\ref{fig:wulffL234} and observe that these Wulff shapes, for
$\delta\to0$, will approach a square, a regular hexagon and a regular octagon,
respectively.
\begin{figure}
\center
\includegraphics[clip,width=0.2\textwidth]{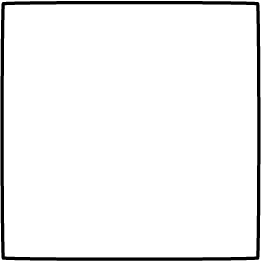}\qquad
\includegraphics[clip,width=0.2\textwidth]{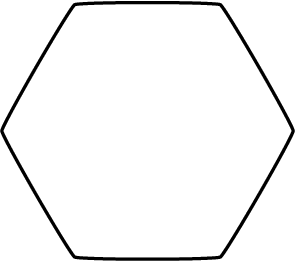}\qquad
\includegraphics[clip,width=0.2\textwidth]{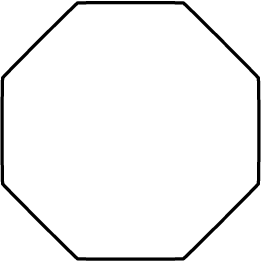}
\caption{Wulff shapes (scaled) for \eqref{eq:gammabgnL} with $L=2,3,4$ and
$\delta=10^{-2}$.}
\label{fig:wulffL234}
\end{figure}%
Of course, the associated crystalline anisotropic energy densities,
when $\delta = 0$, are no
longer differentiable, and so the theory developed in this paper no longer
applies.
Yet, for a fixed $\delta>0$ all the assumptions in this paper are
satisfied and our scheme \eqref{eq:fdbgn} works extremely well.
As an example, we repeat the simulations in Figure~\ref{fig:ani2D99b} for the
anisotropy \eqref{eq:gammabgnL} with $L=2$ and $\delta=10^{-2}$, now
using the scheme \eqref{eq:fdbgn}.  
{From} the evolution shown in Figure~\ref{fig:anibgn_L2} it can be seen that
the initial curve assumes the shape of a smoothed square that then shrinks to
a point. We also observe that
after an initial increase the ratio \eqref{eq:ratio} decreases and
eventually reaches a steady state. The final
distribution of mesh points is such that there is a slightly
lower density of vertices on the nearly flat parts of the curve.
\begin{figure}
\center
\includegraphics[clip,angle=-90,width=0.3\textwidth]{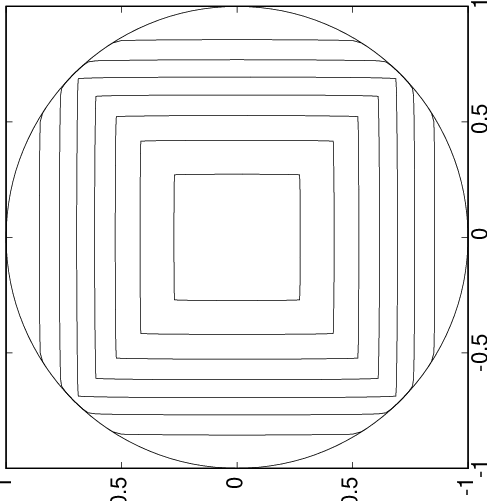}\quad
\includegraphics[clip,angle=-90,width=0.25\textwidth]{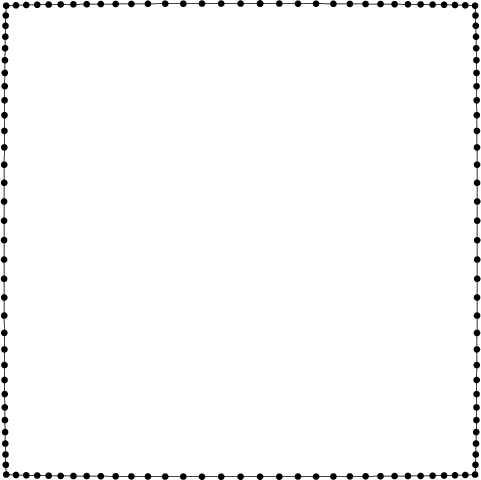}\qquad
\includegraphics[angle=-90,width=0.35\textwidth]{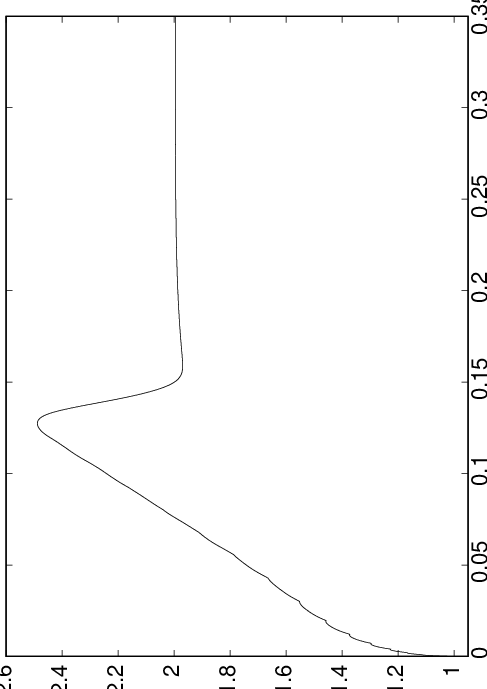}
\caption{Anisotropic curvature flow for \eqref{eq:gammabgnL} with 
$L = 2$ and $\delta = 10^{-2}$, using the scheme \eqref{eq:fdbgn}.
We show the solution at times $t=0,0.05,\ldots,0.35$, the
distribution of vertices on $x_h^M(I)$, as well as the
evolution of the ratio (\ref{eq:ratio}) over time.}
\label{fig:anibgn_L2}
\end{figure}%

Inspired by the computations in \cite[Fig.\ 6.1]{ObermanOTT11} we now consider
evolutions for an initial curve that consists of a $\frac32\pi$-segment
of the unit circle merged with parts of a square of side length 2. 
For our computations we employ the scheme \eqref{eq:fdbgn},
with the discretization parameters $J=256$ and $\Delta t = 10^{-4}$.
The evolutions for the three anisotropies visualized in
Figure~\ref{fig:wulffL234} can be seen in Figure~\ref{fig:OOTT11_bgnL234}.
We observe that the smooth part of the initial curve transitions
into a crystalline shape, while the initial facets of the curve that are 
aligned with the Wulff shape simply shrink.
The other facets disappear, some immediately and some over time, 
as they are replaced by facets aligned with the Wulff shape.
Particularly interesting is the evolution of the left nonconvex corner in the 
initial curve, which shows three qualitatively very different behaviours for 
the three chosen anisotropies.
\begin{figure}
\center
\includegraphics[angle=-90,width=0.3\textwidth]{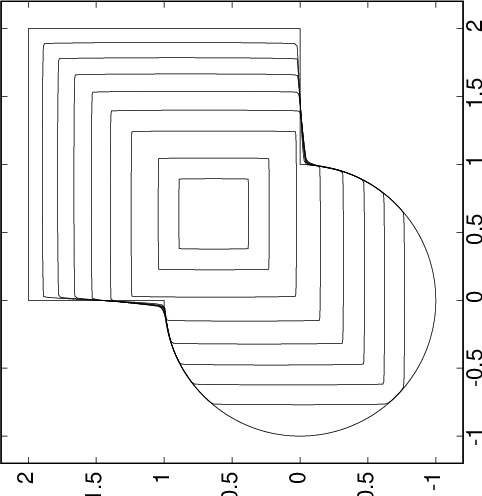}\quad
\includegraphics[angle=-90,width=0.3\textwidth]{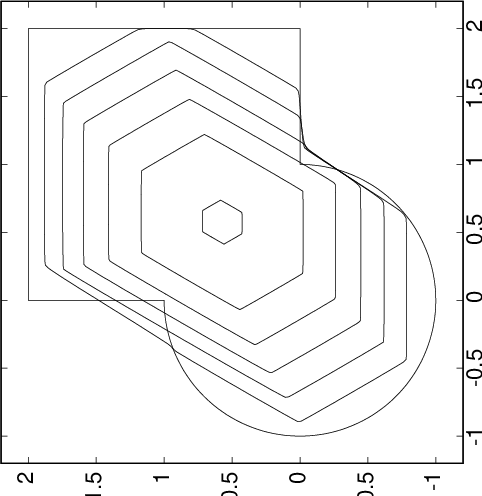}\quad
\includegraphics[angle=-90,width=0.3\textwidth]{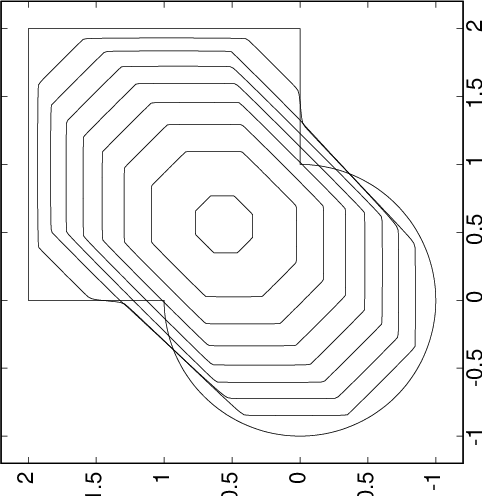}
\caption{Anisotropic curvature flow for \eqref{eq:gammabgnL}
with $L=2,3,4$ and $\delta=10^{-2}$, using the scheme \eqref{eq:fdbgn}.
We show the solution at times $t=0,0.1,\ldots,0.7,0.75$ (left),
$t=0,0.05,\ldots,0.3$ (middle) and $t=0,0.02,\ldots,0.16$ (right).
} 
\label{fig:OOTT11_bgnL234}
\end{figure}%

For the final simulations in this subsection we choose as initial data a
polygon that is very similar to the initial curve from 
\cite[Fig.~0]{AlmgrenT95}.
In their seminal work, Almgren and Taylor consider motion by crystalline
curvature, which is the natural generalization of anisotropic curve shortening
flow to purely crystalline anisotropies, that is when the Wulff shape is a
polygon. For motion by crystalline curvature
a system of ODEs for the sizes and positions of all the facets
of an evolving polygonal curve has to be solved. Here the initial curve needs
to be admissible, in the sense that it only exhibits facets that also appear 
in the Wulff shape, and any two of its neighbouring facets are
also neighbours in the Wulff shape.
Hence the initial curve for the computations shown in
Figure~\ref{fig:AlmgrenT95_bgnL234}, for which we employed the
scheme \eqref{eq:fdbgn} with $J=512$ and $\Delta t = 10^{-4}$,
is admissible for an eightfold anisotropy, 
with a regular octagon as Wulff shape. 
Our simulation for 
the smoothed anisotropy \eqref{eq:gammabgnL} with $L=4$ and $\delta=10^{-4}$ 
agrees remarkably well with the evolution shown in \cite[Fig.\ 0]{AlmgrenT95}.
In fact, it is natural to conjecture that in the
limit $\delta\to0$, anisotropic curve shortening flow for the anisotropies
\eqref{eq:gammabgnL} converges to flow by crystalline curvature
with respect to the crystalline surface energies \eqref{eq:gammabgnL} with
$\delta=0$.
\begin{figure}
\center
\includegraphics[angle=-90,width=0.3\textwidth]{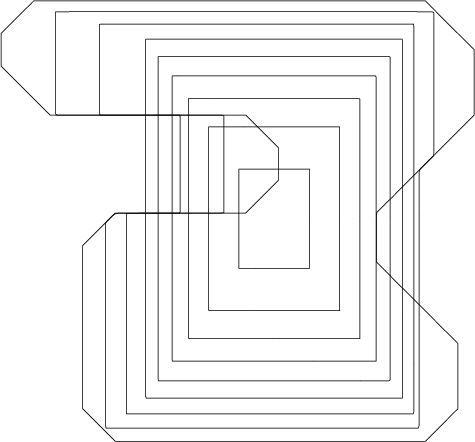}\qquad
\includegraphics[angle=-90,width=0.3\textwidth]{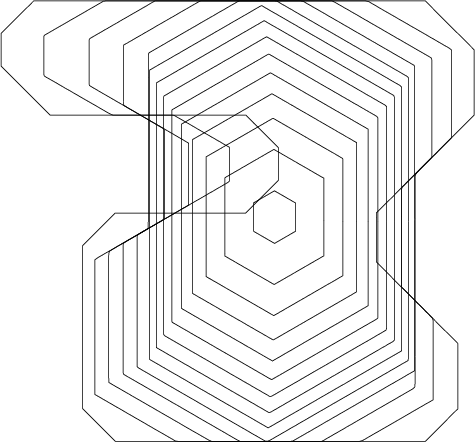}\qquad
\includegraphics[angle=-90,width=0.3\textwidth]{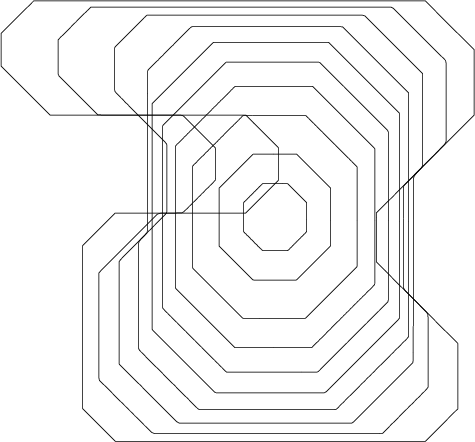}
\caption{Anisotropic curvature flow for \eqref{eq:gammabgnL}
with $L=2,3,4$ and $\delta=10^{-4}$, using the scheme \eqref{eq:fdbgn}.
We show the solution at times $t=0,2,\ldots,16$ (left),
$t=0,0.5,\ldots,6,6.4$ (middle) and $t=0,0.4,\ldots,3.2,3.4$ (right).
} 
\label{fig:AlmgrenT95_bgnL234}
\end{figure}%
We stress that for the cases $L=2$ and $L=3$, when the Wulff shape is
a square and a regular hexagon, respectively, the initial curve
in Figure~\ref{fig:AlmgrenT95_bgnL234} is no longer admissible in the
sense described above. 
As we only deal with the case $\delta>0$, our scheme \eqref{eq:fdbgn} has no
difficulties in computing the evolutions shown in 
Figure~\ref{fig:AlmgrenT95_bgnL234} for $L=2$ and $L=3$.
We observe once again that new facets appear where the initial polygon is not 
aligned with the Wulff shape, while the admissible facets simply shrink.

\subsection{Curve shortening flow in Riemannian manifolds}

In this subsection we consider the setup from
Example~\ref{ex:1}\ref{item:ex1c}, see also 
Example~\ref{ex:2}\ref{item:ex2c}. 
At first we look at the simpler case of a manifold that is conformally flat, 
so that we can employ the scheme \eqref{eq:fdhypbol}. 
As an example we take $G(z) = \altg(z) \Id$ with $\altg(z) = (z_1)^{-2}$
and note that with $\Omega = \{ z \in \bR^2 : z_1 > 0 \}$
we obtain a model for the hyperbolic plane, which is a 
two-dimensional manifold that cannot be embedded into $\bR^3$,
as was proved by Hilbert, \cite{Hilbert1901}, see also
\cite[\S11.1]{Pressley10}.
{From} \cite[Appendix~A]{hypbol}, and on noting
Lemma~\ref{lem:appB} in Appendix~\ref{sec:appB}, 
we recall that a true solution for
\eqref{eq:acsf}, i.e.\ geodesic curvature flow in the hyperbolic plane,
is given by a family of translating and shrinking circles in $\Omega$:
\begin{equation} \label{eq:truehypbol}
\Gamma(t) = \tbinom{a(t)}{0} + r(t) \bS^1,\quad
a(t) = e^{-t}a(0),\
r(t) = \left( r^2(0) - a^2(0) \left[ 1 - e^{-2t} \right]\right)^\frac12,
\end{equation}
with $a(0) > r(0) > 0$ and $\bS^1 = \{ z \in \bR^2 : |z|=1\}$.
In Figure~\ref{fig:flatmu1_circle} we show such an evolution,
starting from a unit circle centred at $\binom{2}{0}$, 
computed with the scheme \eqref{eq:fdhypbol},
where, since $\altg$ is convex in $\Omega$, we choose $\altg_+ = \altg$. 
We observe that during the evolution the discrete geodesic length is
decreasing, while the approximation to the
shrinking circle remains nearly equidistributed throughout.
At the final time $T=0.14$ the maximum difference between $r(T)$ and
$|x_h^M(q_j) - \tbinom{a(T)}{0}|$, for $1\leq j\leq J$, is less than
$6\cdot10^{-3}$, indicating that the polygonal curve $\Gamma_h^M = x_h^M(I)$ is
a very good approximation of the true solution $\Gamma(T)$ from
\eqref{eq:truehypbol}. 
\begin{figure}
\center
\includegraphics[angle=-90,width=0.25\textwidth]{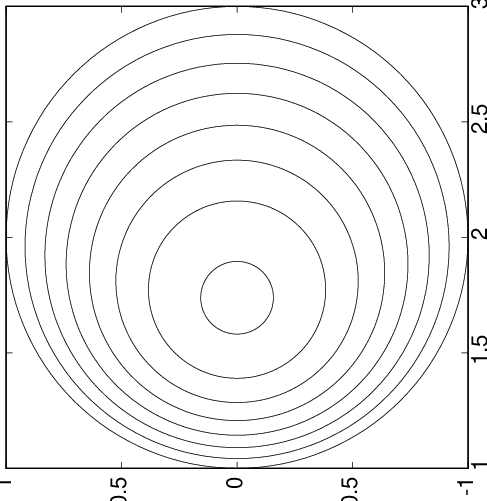}\quad
\includegraphics[angle=-90,width=0.35\textwidth]{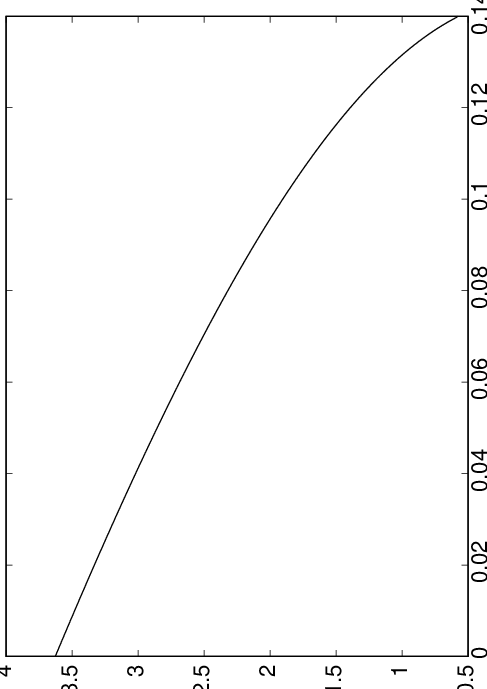}
\includegraphics[angle=-90,width=0.35\textwidth]{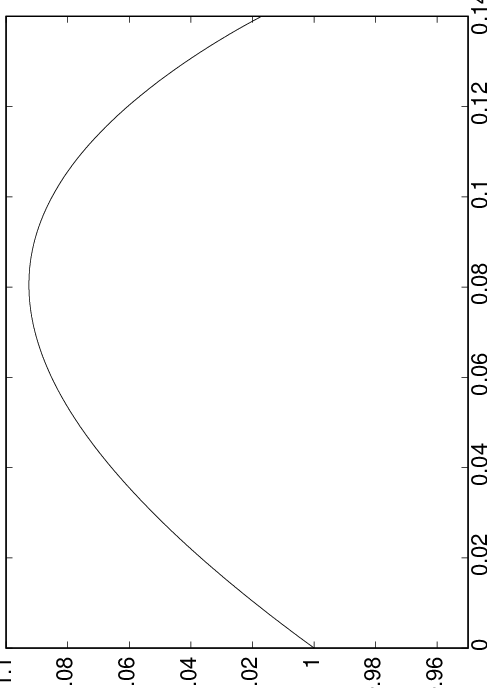}
\caption{Curvature flow in the hyperbolic plane, using the scheme
\eqref{eq:fdhypbol}. 
Solution at times $t=0,0.02,\ldots,0.14$.
We also show plots of the discrete energy $\mathcal{E}^h(x_h^m)$
and of the ratio (\ref{eq:ratio}) over time.} 
\label{fig:flatmu1_circle}
\end{figure}

For the remainder of this subsection we consider general 
Riemannian manifolds that are not necessarily
conformally flat. An example application is the modelling of geodesic
curvature flow on a hypersurface in $\bR^3$ that is given by a graph.
In particular, we assume that
\begin{equation} \label{eq:Fvarphi}
F(z) = (z_1, z_2, \varphi(z))^T,\quad \varphi \in C^3(\Omega).
\end{equation} 
The induced matrix $G$ is then given by 
$G(z) = \Id + \nabla \varphi(z) \otimes \nabla \varphi(z)$,
and the splitting \eqref{eq:splitG} for the scheme \eqref{eq:fdriem} can
be defined by $G_+(z)=G(z) + c_\varphi |z|^2 \Id$ and 
$G_-(z)=- c_\varphi |z|^2 \Id$, with
$c_\varphi \in \bRgeq$ chosen sufficiently large. 
In all our computations
we observed a monotonically decreasing discrete energy when choosing
$c_\varphi=0$, and so we always let $G_+ = G$.

We begin with a convergence experiment on the right circular cone 
defined by $\varphi(z) = b |z|$ and
$\Omega = \bR^2 \setminus\{0\}$ in \eqref{eq:Fvarphi}, for some 
$b \in \bRgeq$. A simple calculation verifies that the family of curves
$\tilde\Gamma(t) = r(t) (\bS^1 \times \{ b \})$, with
$r(t) = [r^2(0) - \frac{2t}{1+b^2}]^\frac12$ and $r(0) > 0$,
evolves under geodesic curvature flow on $\mathcal M = F(\Omega)$.
In fact, it is not difficult to show that the particular parameterization
\begin{equation} \label{eq:riemconetrue}
x(\rho,t)= [r^2(0) - \frac{2t}{1+b^2}]^\frac12
\binom{\cos(2\pi\rho)}{\sin(2\pi\rho)},
\end{equation}
so that $\tilde\Gamma(t) = F(x(I,t))$, solves \eqref{eq:Hxt}.
Similarly to Table~\ref{tab:aMCtrue}, we report on the 
$H^1$-- and $L^2$--errors between \eqref{eq:riemconetrue}, for $b = \sqrt{3}$
and $r(0)=1$, and the discrete
solutions for the scheme \eqref{eq:fdriem} in Table~\ref{tab:riemconetrue}.
Here for a sequence of mesh sizes we use uniform time steps of size 
$\Delta t = h^2$, for $h = J^{-1} = 2^{-k}$, $k=5,\ldots,9$. 
Once again, the observed numerical results confirm the optimal convergence 
rate from Theorem~\ref{thm:errest}.
\begin{table}
\center
\begin{tabular}{|r|c|c|c|c|}
\hline
$J$ & $\displaystyle\max_{m=0,\ldots,M} \| x(\cdot,t_m) -  x^m_h\|_0$ & EOC
& $\displaystyle\max_{m=0,\ldots,M} \| x(\cdot,t_m) -  x^m_h\|_1$ & EOC 
\\ \hline
32  & 1.6096e-02 & ---  & 3.5595e-01 & ---  \\
64  & 4.2080e-03 & 1.94 & 1.7805e-01 & 1.00 \\
128 & 1.0635e-03 & 1.98 & 8.9032e-02 & 1.00 \\
256 & 2.6656e-04 & 2.00 & 4.4517e-02 & 1.00 \\
512 & 6.6685e-05 & 2.00 & 2.2259e-02 & 1.00 \\
1024& 1.6674e-05 & 2.00 & 1.1129e-02 & 1.00 \\
\hline
\end{tabular}
\caption{Errors for the convergence test for (\ref{eq:riemconetrue}),
with $b = \sqrt{3}$ and  $r(0) = 1$, 
over the time interval $[0,\frac12]$ for the scheme 
\eqref{eq:fdriem} with $G_+ = G$.}
\label{tab:riemconetrue}
\end{table}%

On the same cone $\mathcal M$, we perform two computations for a curve evolving
by geodesic curvature flow.
For the simulation on the left of Figure~\ref{fig:riemcone} it can be 
observed that as the initial curve $F(x_h^0(I))$ is homotopic to a point on 
$\mathcal M$, it shrinks to a point away from the apex.
On recalling Conjecture~5.1 in \cite{angenent}, due to Charles M. Elliott,
on the right of Figure~\ref{fig:riemcone} we also show a numerical experiment 
for a curve that is not homotopic to a point on $\mathcal M$. According to 
the conjecture, any such
curve should shrink to a point at the apex in finite time, and this is indeed
what we observe.
\begin{figure}
\center
\includegraphics[angle=-90,width=0.24\textwidth,align=t]{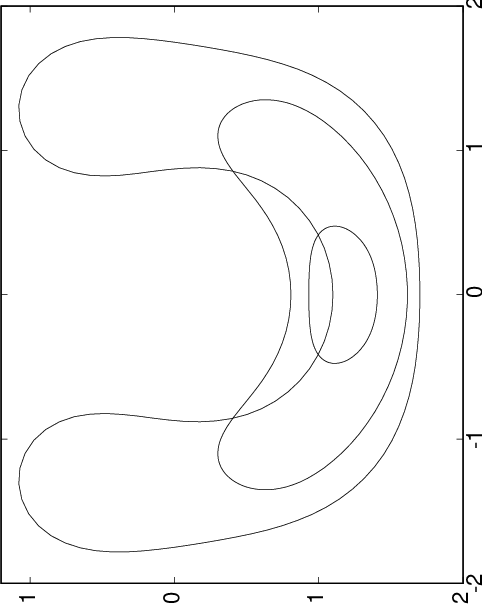}
\includegraphics[trim=80 20 70 20,clip,width=0.27\textwidth,align=t]{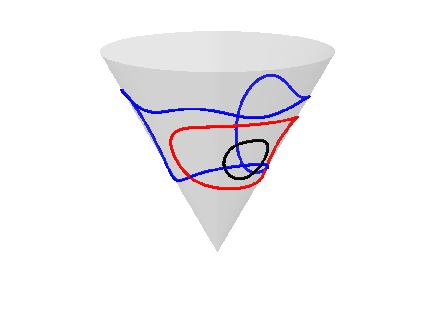}\quad\
\includegraphics[angle=-90,width=0.16\textwidth,align=t]{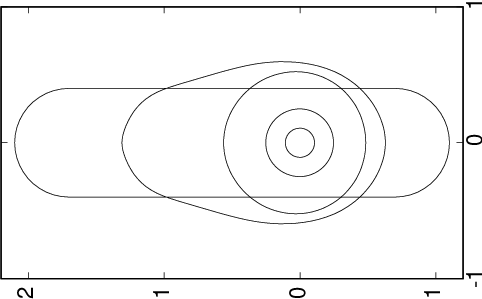}
\includegraphics[trim=80 20 70 20,clip,width=0.27\textwidth,align=t]{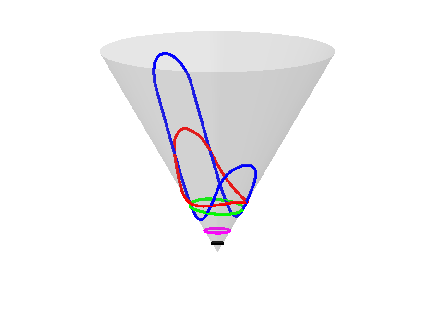}
\caption{Geodesic curvature flow on the cone defined by \eqref{eq:Fvarphi} 
with $\varphi(z) = \sqrt{3} |z|$.
We show the evolution of $x^m_h$ in $\Omega$, 
as well as of $F(x^m_h)$ on $\mathcal M$, 
at times $t=0, 1, 1.8$ (left) and $t=0, 0.2, 0.6, 1, 1.1$ (right).
} 
\label{fig:riemcone}
\end{figure}%

For the final set of numerical simulations, we model a surface with two 
mountains. Following \cite{WuT10}, we define
\begin{equation} \label{eq:varphi}
\varphi(z) = \lambda_1 \psi(|z|^2) + \lambda_2 \psi(|z- \tbinom20|^2),
\quad \lambda_1, \lambda_2 \in \bRgeq,
\qquad \text{where } \
\psi(s) = \begin{cases}
 e^{- \frac1{1 - s}} & s < 1,\\
 0 & s \geq 1,
\end{cases} 
\end{equation}
and let $\Omega = \bR^2$.
We show three evolutions for geodesic curvature flow on such surfaces
in Figures~\ref{fig:riemmount2small}, \ref{fig:riemmount2uneven}
\ref{fig:riemmount2stuck}.
In each case we start the evolution from an
equidistributed approximation of a circle of radius 2 in $\Omega$, 
centred at the origin.
In the first two simulations the curve manages to continuously decrease its
length in $\bR^3$, until it shrinks to a point. To achieve this in the second
example, the curve needs to ``climb up'' the higher mountain.
However, in the final example the two mountains are too steep, and so the curve
can no longer decrease its length by climbing higher. 
In fact, the curve approaches a steady state for the flow, that is,
a geodesic on $\mathcal M$, i.e.\ a curve with vanishing geodesic curvature.
The plot of the discrete energy in Figure~\ref{fig:riemmount2stuck} confirms
that the evolution is approaching a geodesic.
\begin{figure}
\center
\includegraphics[trim=80 50 70 50,clip,width=0.2\textwidth]{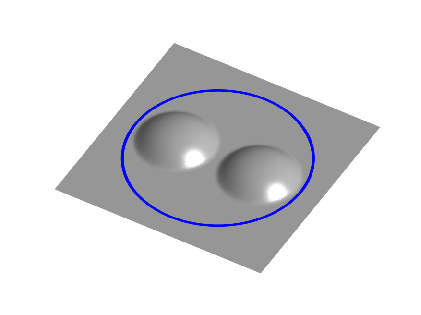}
\includegraphics[trim=80 50 70 50,clip,width=0.2\textwidth]{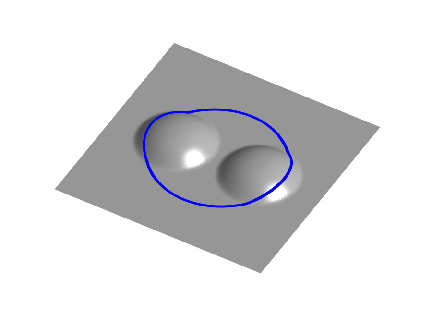}
\includegraphics[trim=80 50 70 50,clip,width=0.2\textwidth]{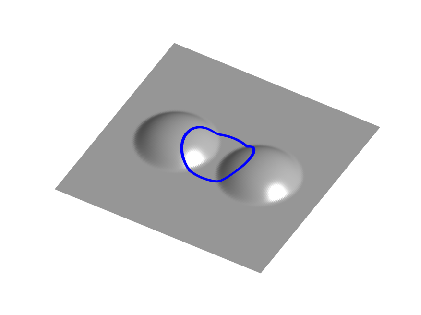}
\includegraphics[trim=80 50 70 50,clip,width=0.2\textwidth]{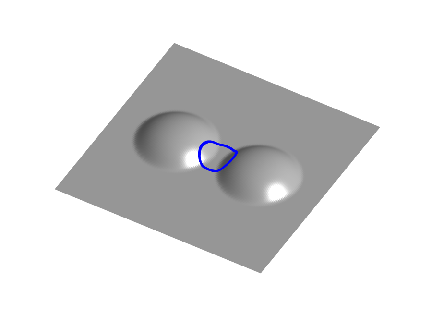}
\caption{Geodesic curvature flow on the graph defined by
\eqref{eq:varphi} with $\lambda_1=\lambda_2=1$.
We show the evolution of $F(x^m_h)$ on $\mathcal M$ at times $t=0, 1, 2, 2.2$.
} 
\label{fig:riemmount2small}
\end{figure}%
\begin{figure}
\center
\includegraphics[trim=100 50 90 50,clip,width=0.2\textwidth]{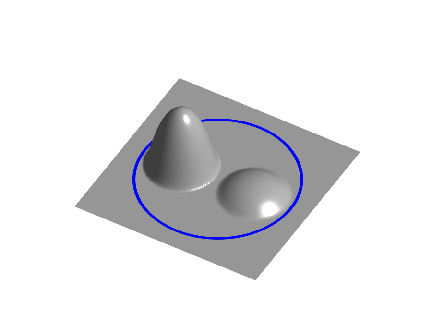}
\includegraphics[trim=100 50 90 50,clip,width=0.2\textwidth]{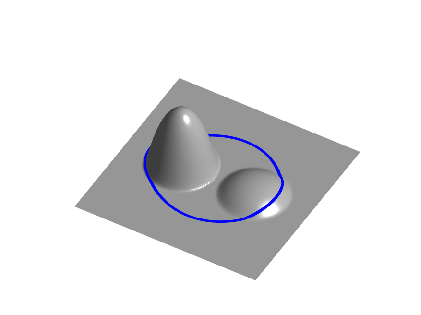}
\includegraphics[trim=100 50 90 50,clip,width=0.2\textwidth]{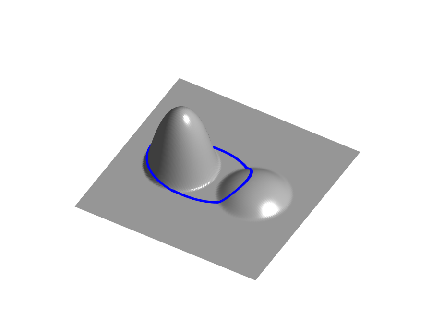}
\includegraphics[trim=100 50 90 50,clip,width=0.2\textwidth]{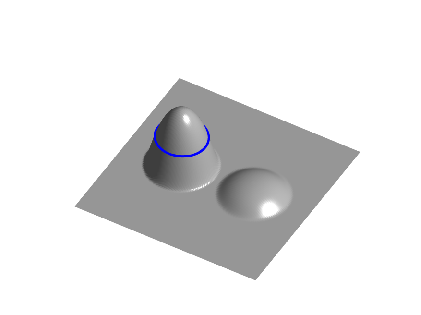}
\caption{Geodesic curvature flow on the graph defined by
\eqref{eq:varphi} with $(\lambda_1,\lambda_2)=(5,1)$.
We show the evolution of $F(x^m_h)$ on $\mathcal M$ at times $t=0, 1, 2, 4$.
} 
\label{fig:riemmount2uneven}
\end{figure}%
\begin{figure}
\center
\includegraphics[trim=100 50 90 50,clip,width=0.2\textwidth]{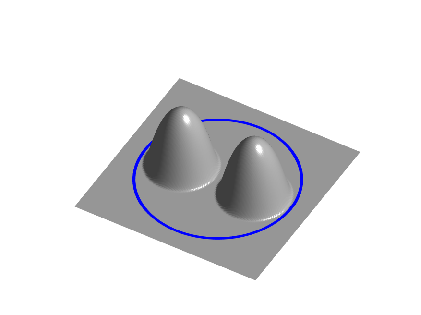}
\includegraphics[trim=100 50 90 50,clip,width=0.2\textwidth]{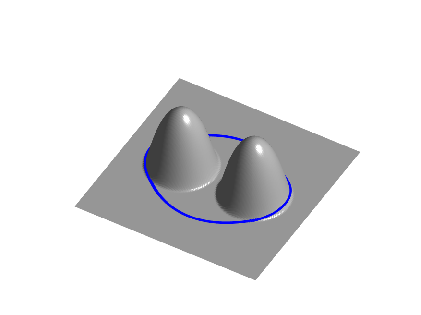}
\includegraphics[trim=100 50 90 50,clip,width=0.2\textwidth]{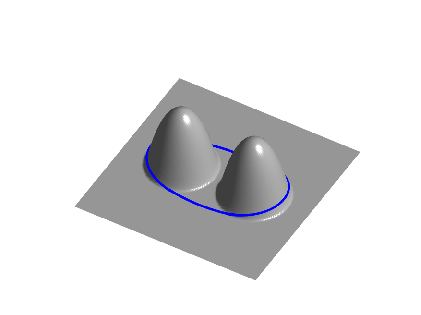}
\includegraphics[trim=100 50 90 50,clip,width=0.2\textwidth]{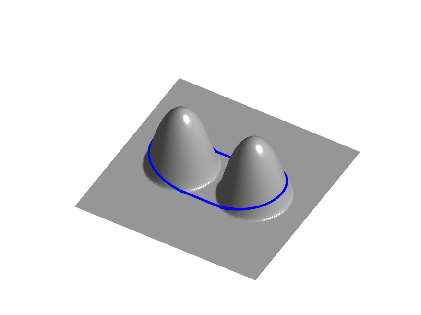} \\
\includegraphics[trim=110 70 100 80,clip,width=0.3\textwidth,align=t]{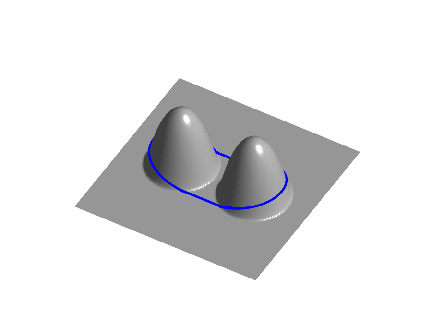} \quad
\includegraphics[angle=-90,width=0.35\textwidth]{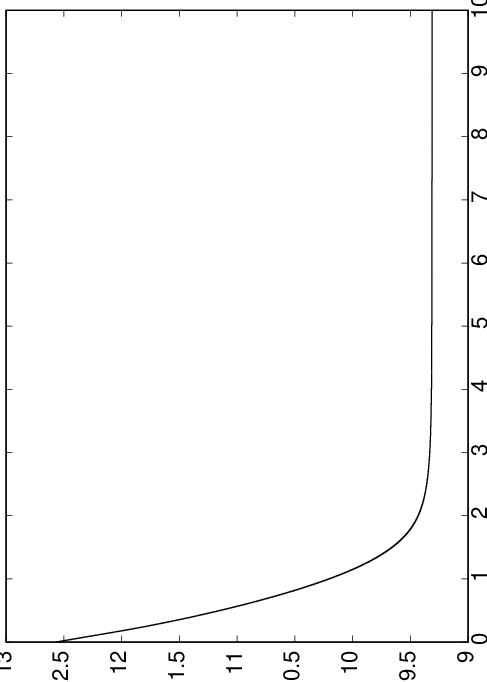}
\caption{Geodesic curvature flow on the graph defined by
\eqref{eq:varphi} with $\lambda_1=\lambda_2=5$.
We show the evolution of $F(x^m_h)$ on $\mathcal M$ at times $t=0, 1, 2, 4$.
Below we show a plot of $F(x^M_h)$ on $\mathcal M$, as well as a plot
of the discrete energy $\mathcal{E}^h(x_h^m)$ over time.
} 
\label{fig:riemmount2stuck}
\end{figure}%

\begin{appendix}
\renewcommand{\theequation}{\Alph{section}.\arabic{equation}}
\setcounter{equation}{0}
\section{First variation of the anisotropic energy} \label{sec:appA}

{\it Proof of Lemma~\ref{lem:firstvar}}.
Abbreviating $\tilde\gamma(z,p) = \altm(z)\gamma(z,p)$, 
$(z,p) \in \Omega \times \bR^2$, we temporarily write 
$\mathcal E$ in (\ref{eq:aniso}) as 
\begin{displaymath}
\mathcal E(\Gamma) = \int_\Gamma \tilde \gamma(\cdot,\nu) \dH1. 
\end{displaymath}
Let us fix a curve $\Gamma \subset \Omega$ and a smooth vector field
$V$ defined in an open neighbourhood of $\Gamma$. 
We infer from Corollary~4.3 in \cite{DoganN12} and (\ref{eq:phidd}) that 
the first variation of $\mathcal E(\Gamma)$ in the direction $V$ is
given by
\begin{align} \label{eq:firstvar1}
{\rm d} \mathcal E(\Gamma; V) & = 
\int_{\Gamma} \bigl( ( \tilde \gamma(\cdot,\nu) - \tilde \gamma_p(\cdot,\nu) 
\cdot \nu) \varkappa + \partial_\nu \tilde \gamma(\cdot,\nu) 
+ \mbox{div}_\Gamma \tilde \gamma_p(\cdot,\nu) 
+ \tilde \gamma_{pp}(\cdot,\nu) : \nabla_\Gamma \nu \bigr) \, V \cdot \nu 
\dH1 \nonumber \\
& = \int_{\Gamma} \bigl( \partial_\nu \tilde \gamma(\cdot,\nu) 
+ \mbox{div}_\Gamma \tilde \gamma_p(\cdot,\nu) 
+ \tilde \gamma_{pp}(\cdot,\nu) : \nabla_\Gamma \nu \bigr) \,  V \cdot \nu \dH1. 
\end{align}
Here we note that the differential operators 
$\partial_\nu f = f_{z_i} \nu_i$ and 
$\div_\Gamma f = f_{i,z_i} - f_{i, z_j} \nu_j \nu_i$ on $\Gamma$ 
only act on the first variable of functions defined in $\Omega \times \bR^2$. 
In addition, we observe that the Weingarten map $\nabla_\Gamma \nu$ is given by $\nabla_\Gamma \nu  = -\varkappa \tau \otimes\tau$.
We then calculate, on noting \eqref{eq:phidd}, that 
\begin{align*}
\partial_\nu \tilde \gamma(\cdot,\nu) & = 
\partial_\nu \altm \gamma(\cdot,\nu) + \altm \gamma_{z_i}(\cdot,\nu) \nu_i, \\
\mbox{div}_\Gamma \tilde \gamma_p(\cdot,\nu) & = 
\mbox{div}_\Gamma \bigl( \altm \gamma_p(\cdot,\nu) \bigr) \\ &
= \altm \bigl( \gamma_{p_i z_i}(\cdot,\nu) 
- \gamma_{p_i z_j}(\cdot,\nu) \nu_i \nu_j \bigr)
+ \bigl( \altm_{z_i} - \partial_\nu \altm \nu_i \bigr) \gamma_{p_i}(\cdot,\nu) 
\\ & 
= \altm \bigl( \gamma_{p_i z_i}(\cdot,\nu) - \gamma_{z_j}(\cdot,\nu) \nu_j 
\bigr) + \nabla \altm \cdot \gamma_p(\cdot,\nu) 
- \partial_\nu \altm \gamma(\cdot,\nu), \\
\tilde \gamma_{pp}(\cdot,\nu) : \nabla_\Gamma \nu & = 
- \altm \gamma_{pp}(\cdot,\nu) : \varkappa \tau \otimes \tau 
= -\altm \varkappa \gamma_{pp}(\cdot,\nu) \tau \cdot \tau.
\end{align*}
If we insert the above relations into (\ref{eq:firstvar1}) and recall (\ref{eq:nugamma}), we obtain
\begin{displaymath}
{\rm d} \mathcal E(\Gamma; V) 
= -\int_{\Gamma} \varkappa_\gamma \,  V \cdot \nu \,  \altm \dH1 
= - \int_{\Gamma} \varkappa_\gamma \,  V \cdot \nu_\gamma \,  \gamma(\cdot,\nu) 
\altm \dH1,
\end{displaymath}
which is \eqref{eq:firstvar2}.
\qedsymbol

\setcounter{equation}{0}
\section{Geodesic curve shortening flow in Riemannian manifolds} 
\label{sec:appB}

In this appendix we prove the claims formulated at the end of Example~\ref{ex:1}\ref{item:ex1c}.
Here we will make use of standard concepts in Riemannian geometry, and we 
refer the reader to the textbook \cite{Klingenberg78} 
for further details. \\
Let $F: \Omega \to \mathcal M$ be a local parameterization of 
a two-dimensional Riemannian manifold $(\mathcal M,g)$
and denote by $\{ \partial_1,\partial_2 \}$ the corresponding basis of the 
tangent space $T_{F(z)} \mathcal M$, for $z \in \Omega$. 
We also let $g_{ij}(z)=g_{F(z)}(\partial_i,\partial_j)$,
$G(z)=(g_{ij}(z))_{i,j=1}^2$, $(g^{ij}(z))_{i,j=1}^2=G^{-1}(z)$, 
$\gamma(z,p)= \sqrt{G^{-1}(z) p \cdot p}$ and $\altm(z)=\sqrt{\det G(z)}$,
for $z \in \Omega$ and $p \in \bR^2$, which induces the energy equivalence
\eqref{eq:Egamma}.
Let $\tilde \Gamma$ be a smooth curve in $\mathcal M$ with unit tangent 
$\tau_g$ and a unit normal $\nu_g$ such that $\{\tau_g, \nu_g \}$ 
is an orthonormal basis of the tangent space $T \mathcal M$,
i.e.\ $g(\tau_g,\tau_g) = g(\nu_g,\nu_g) = 1$ and $g(\tau_g,\nu_g)=0$.
Then the geodesic curvature $\varkappa_g$ of $\tilde \Gamma$ is defined by
\begin{equation} \label{eq:varkappag}
\varkappa_g = g (\Dds \tau_g, \nu_g)
\quad \text{on }\tilde\Gamma, 
\end{equation}
where $\Dds \tau_g$ is the covariant derivative of $\tau_g$.

\begin{lemma} \label{lem:appB0}
Let $\Gamma \subset \Omega$ be a smooth curve. Then the anisotropic curvature
of $\Gamma$ and the geodesic curvature of $\tilde\Gamma = F(\Gamma)$ coincide 
in the sense that $\varkappa_g \circ F = \varkappa_\gamma$ on $\Gamma$.
\end{lemma}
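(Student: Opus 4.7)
\textit{Proof plan.} The cleanest route is variational: it avoids computing Christoffel symbols and directly exploits the energy identity already established in Example~\ref{ex:1}\ref{item:ex1c} together with Lemma~\ref{lem:firstvar}. The starting point is the observation that for the specific choice $\gamma(z,p)=\sqrt{G^{-1}(z)p\cdot p}$, $\altm(z)=\sqrt{\det G(z)}$ we have
\begin{equation*}
\mathcal E(\Gamma) = \int_\Gamma \sqrt{G\tau\cdot\tau}\dH1 = L_g(\tilde\Gamma),
\end{equation*}
where $L_g$ denotes the Riemannian length in $(\mathcal M,g)$. Hence, for any smooth variation $(\Gamma_\epsilon)$ of $\Gamma$ generated by a smooth vector field $V$ on an open neighbourhood of $\Gamma$, the corresponding variation $(\tilde\Gamma_\epsilon) = (F(\Gamma_\epsilon))$ of $\tilde\Gamma$ is generated by the pushforward $W = \mathrm{d}F\cdot V$, and the two first variations must agree.

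The next step is to write out both variations. From Lemma~\ref{lem:firstvar} and $\nu_\gamma = \nu/\gamma(\cdot,\nu)$,
\begin{equation*}
{\rm d}\mathcal E(\Gamma;V) = -\int_\Gamma \varkappa_\gamma\, V\cdot\nu\,\altm \dH1,
\end{equation*}
while the standard first variation of arclength in $(\mathcal M,g)$ yields
\begin{equation*}
{\rm d}L_g(\tilde\Gamma;W) = -\int_{\tilde\Gamma} \varkappa_g\, g(W,\nu_g)\, {\rm d}s_g .
\end{equation*}
To convert the right-hand integral into one over $\Gamma$ I would identify three pushforward relations. First, $\tau_g = \mathrm{d}F\cdot\tau/\sqrt{G\tau\cdot\tau}$, and the $g$-orthogonal unit normal is given by $\nu_g = \mathrm{d}F\cdot(G^{-1}\nu)/\sqrt{G^{-1}\nu\cdot\nu} = \mathrm{d}F\cdot(G^{-1}\nu)/\gamma(\cdot,\nu)$, since $G(G^{-1}\nu)\cdot\tau = \nu\cdot\tau = 0$. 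Second, the definition of $g$ then gives
\begin{equation*}
g(W,\nu_g) = \frac{G V\cdot G^{-1}\nu}{\gamma(\cdot,\nu)} = \frac{V\cdot\nu}{\gamma(\cdot,\nu)}.
\end{equation*}
Third, the arclength identity \eqref{eq:mgamma} with $p = \nu$ yields $\gamma(\cdot,\nu)\altm = \sqrt{G\tau\cdot\tau}$, which is exactly the Jacobian relating $\dH1$ on $\Gamma$ to ${\rm d}s_g$ on $\tilde\Gamma$. Combining the three,
\begin{equation*}
\int_{\tilde\Gamma} \varkappa_g\, g(W,\nu_g)\, {\rm d}s_g
= \int_\Gamma (\varkappa_g\circ F)\, V\cdot\nu\, \altm \dH1.
\end{equation*}

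Equating the two first variations and using that $V$ is arbitrary (so that $V\cdot\nu$ can be prescribed arbitrarily on $\Gamma$) yields the pointwise identity $\varkappa_\gamma = \varkappa_g\circ F$ on $\Gamma$. The only delicate point is to make sure the orientation conventions are consistent: one must fix an orientation on $\Gamma$ (hence $\tau,\nu$) and carry the induced orientation through to $(\tau_g,\nu_g)$ via $\mathrm{d}F$, so that the sign of $\varkappa_g$ matches that of $\varkappa_\gamma$. I expect this sign-checking to be the only mildly annoying step; everything else reduces to the algebraic identities \eqref{eq:mgamma} and a one-line use of Lemma~\ref{lem:firstvar}.
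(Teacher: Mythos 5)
Your argument is correct, but it is genuinely different from the paper's. The paper proves Lemma~\ref{lem:appB0} by brute force in local coordinates: it computes $\varkappa_g$ from the covariant derivative formula $\Dds \tilde x_{\tilde s} = ( x_{k,\tilde s \tilde s} + \Gamma^k_{ij}(x) x_{i,\tilde s} x_{j,\tilde s} ) \partial_k$, separately evaluates every term in the definition \eqref{eq:nugamma} of $\varkappa_\gamma$ using the explicit derivatives of $\gamma(z,p)=\sqrt{G^{-1}(z)p\cdot p}$ and $\altm(z)=\sqrt{\det G(z)}$, and then matches the two expressions via the identity $g_{kl,z_i} = g_{kr} \Gamma^r_{il} + g_{lr} \Gamma^r_{ik}$. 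You instead identify both quantities as the first-variation density of the same functional with respect to the same reference measure $\altm\dH1$: Lemma~\ref{lem:firstvar} gives ${\rm d}\mathcal E(\Gamma;V) = -\int_\Gamma \varkappa_\gamma\, V\cdot\nu\,\altm\dH1$ (using $V\cdot\nu_\gamma\,\gamma(\cdot,\nu)=V\cdot\nu$), while the standard first variation of Riemannian arclength, pulled back via the pushforward relations you list and the Jacobian identity $\gamma(\cdot,\nu)\altm=\sqrt{G\tau\cdot\tau}$ from \eqref{eq:mgamma}, gives the same expression with $\varkappa_g\circ F$ in place of $\varkappa_\gamma$; arbitrariness of $V\cdot\nu$ then forces pointwise equality. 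All your intermediate identities check out, and the orientation worry you flag is benign: your $\nu_g = \mathrm{d}F\cdot(G^{-1}\nu)/\gamma(\cdot,\nu)$ is exactly the normal $\frac{1}{\gamma(x,\nu)}g^{ij}(x)\nu_j\partial_i$ that the paper uses to define $\varkappa_g$ in \eqref{eq:varkappag}, so the signs agree by construction. The trade-off is that your route imports the first variation of arclength in $(\mathcal M,g)$ as an external standard fact (and leans on Lemma~\ref{lem:firstvar}, itself proved via \cite{DoganN12}), whereas the paper's computation is self-contained and yields the explicit coordinate formula \eqref{eq:kappag} for $\varkappa_g$ as a by-product; your argument is shorter, avoids Christoffel symbols entirely, and makes the conceptual reason for the coincidence transparent.
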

\begin{proof}
Let $\Gamma = x(I)$ for a parameterization 
$x: I \to \Omega$, so that $\tilde\Gamma = \tilde x(I)$ for 
$\tilde x = F \circ x$.
Denoting by $\tilde s$ the arclength of $\tilde x$,
we see that $\tau_g=\tilde x_{\tilde s}$ and 
$\nu_g= \frac{1}{\gamma(x,\nu)} g^{ij}(x) \nu_j \partial_i$
form an orthonormal basis of $T_{\tilde x} \mathcal M$.
Using the formula in \cite[Lemma~5.1.2]{Klingenberg78} we may write
\begin{equation} \label{eq:covder}
\Dds \tau_g=
 \Dds \tilde x_{\tilde s} = \bigl( x_{k,\tilde s \tilde s} + \Gamma^k_{ij}(x) x_{i,\tilde s} x_{j,\tilde s} \bigr) \partial_k,
\end{equation}
where $(\Gamma^k_{ij}(x))_{i,j,k=1}^2$ are the Christoffel symbols
of $\mathcal M$ at $F(x)$. Since $\partial_{\tilde s} = 
[G(x) x_\rho \cdot x_\rho]^{-\frac12} \partial_\rho$,  
\eqref{eq:varkappag}, \eqref{eq:covder} and \eqref{eq:tau} imply
\begin{align}
\varkappa_g \circ \tilde x & = 
g_{\tilde x} (\Dds \tau_g,\nu_g)= g_{kr}(x) \bigl( x_{k,\tilde s \tilde s} + \Gamma^k_{ij}(x) x_{i,\tilde s} x_{j,\tilde s} \bigr) 
\frac{1}{\gamma(x,\nu)} g^{lr}(x) \nu_l \nonumber  \\
& = \frac{1}{\gamma(x,\nu)} \bigl( x_{\tilde s \tilde s} \cdot \nu + \Gamma^k_{ij}(x) x_{i,\tilde s} x_{j, \tilde s} \nu_k \bigr) = 
\frac{1}{\gamma(x,\nu)} \frac{x_{\rho \rho} \cdot \nu + \Gamma^k_{ij}(x) x_{i,\rho} x_{j,\rho} \nu_k}{G(x) x_\rho \cdot x_\rho} \nonumber  \\
 & = \frac{1}{\gamma(x,\nu)} \frac{\varkappa + \Gamma^k_{ij}(x) \tau_i \tau_j \nu_k}{G(x) \tau \cdot \tau}. \label{eq:kappag}
\end{align}
On the other hand, on recalling
$\gamma(z,p)=\sqrt{G^{-1}(z) p \cdot p}$ and $\altm(z)= \sqrt{\det G(z)}$,
we observe that
\begin{subequations}
\begin{align}
\gamma_p( z, p) &= \frac{G^{-1}( z)  p}{\gamma( z, p)}, \quad
\gamma_{pp}( z, p) = \frac{G^{-1}( z)}{\gamma( z, p)} 
- \frac{ G^{-1}( z)  p \otimes G^{-1}( z)  p}{\gamma^3( z, p)}, 
\label{eq:riemphipp} \\
\gamma_{p z_j}( z, p) & = 
 \frac{(G^{-1})_{z_j}( z)  p}{\gamma( z, p)} 
-\tfrac12 \frac{(G^{-1})_{z_j}( z)  p \cdot  p}{\gamma^3( z, p)}  
G^{-1}( z)  p , \label{eq:riemphipz} \\
\altm_{z_j}(z) &= \tfrac12 \tr(G^{-1}(z) G_{z_j}(z)) \altm(z). \label{eq:riemaz}
\end{align}
\end{subequations}
We infer from \eqref{eq:riemphipp} that
\begin{equation} \label{eq:riemkappa1}
\gamma_{pp}( x,\nu) \tau \cdot \tau  
= \frac{1}{\gamma^3( x,\nu)} \left[(G^{-1}( x) \nu \cdot \nu) (G^{-1}( x) \tau \cdot \tau) - (G^{-1}(x) \nu \cdot \tau)^2 \right] 
= \frac{\det G^{-1}(x)}{\gamma^3( x,\nu)}. 
\end{equation}
For notational convenience, we drop the dependences 
on $x$ from now on. It is well-known that
\begin{equation}  \label{eq:christoffel}
g_{kl,z_i} = g_{kr} \Gamma^r_{il} + g_{lr} \Gamma^r_{ik}, \quad i,k,l=1,2.
\end{equation}
Combining \eqref{eq:christoffel}
with the relation $(G^{-1})_{z_i} = - G^{-1} G_{z_i} G^{-1}$, we find that
\[
[ (G^{-1})_{z_i} \nu]_j  =  - g^{jk} g_{kl,z_i} g^{lm} \nu_m = - g^{jk} g^{lm} \bigl( g_{kr} \Gamma^r_{il} + g_{lr} \Gamma^r_{ik} \bigr) \nu_m 
= - g^{lm} \Gamma^j_{il} \nu_m - g^{jk} \Gamma^m_{ik} \nu_m,
\]
as well as
\[
(G^{-1})_{z_i} \nu \cdot \nu =  [ (G^{-1})_{z_i} \nu]_j \nu_j = - \bigl( g^{lm} \Gamma^j_{il} \nu_m + g^{jk} \Gamma^m_{ik} \nu_m \bigr) \nu_j = - 2 g^{jk} \Gamma^m_{ik} \nu_m \nu_j.
\]
If we insert the above relations into \eqref{eq:riemphipz}, we obtain
\[
\gamma_{p_j z_j}(\cdot,\nu)= - \frac{ g^{lm} \Gamma^j_{jl} \nu_m + g^{jk} \Gamma^m_{jk} \nu_m}{\gamma(\cdot,\nu)} + \frac{g^{lk} \Gamma^m_{jk} \nu_m \nu_l}{\gamma^3(\cdot,\nu)}
g^{jr} \nu_r.
\]
Next we infer with the help of \eqref{eq:riemaz} that
\[
\frac{\altm_{z_j}}{\altm} \gamma_{p_j}(\cdot,\nu) = \tfrac12 \frac{g^{kl} g_{kl,z_j}  g^{jr} \nu_r}{\gamma(\cdot,\nu)} = 
\tfrac12 \frac{g^{kl} g^{jr} \bigl( g_{kr} \Gamma^r_{jl} + g_{lr} \Gamma^r_{jk} \bigr) \nu_r}{\gamma(\cdot,\nu)}
=  \frac{g^{jr} \Gamma^k_{jk} \nu_r}{\gamma(\cdot,\nu)}.
\]
As a result,
\[
\gamma_{p_j z_j}(\cdot,\nu) + \frac{\nabla \altm}{\altm} \cdot \gamma_p(\cdot,\nu) = - \frac{ g^{jk} \Gamma^m_{jk} \nu_m}{\gamma(\cdot,\nu)}
+ \frac{ g^{lk} g^{jr} \Gamma^m_{jk} \nu_m \nu_l \nu_r}{\gamma^3(\cdot,\nu)} 
= \frac{1}{\gamma^3(\cdot,\nu)} \bigl( g^{lk} g^{jr} - g^{jk} g^{lr} \bigr) \Gamma^m_{jk} \nu_m \nu_l \nu_r.
\]
Clearly,
\[
g^{lk} g^{jr} - g^{jk} g^{lr} = 
\begin{cases}
(\det  G)^{-1} & (l,k,j,r)=(1,1,2,2), (2,2,1,1), \\
-(\det G)^{-1} & (l,k,j,r)=(1,2,2,1),(2,1,1,2), \\
0 & \mbox{ otherwise},
\end{cases}
\]
so that
\begin{displaymath} 
\gamma_{p_j z_j}(\cdot,\nu) + \frac{\nabla \altm}{\altm} \cdot \gamma_p(\cdot,\nu) = - \frac{\det G^{-1}}{\gamma^3(\cdot,\nu)} \bigl(
\Gamma^m_{22} \nu_m \nu_1^2 + \Gamma^m_{11} \nu_m \nu_2^2 - 2 \Gamma^m_{12} \nu_m \nu_1 \nu_2 \bigr) 
 =  - \frac{\det  G^{-1} \, \Gamma^m_{kl} \nu_m \tau_k \tau_l}{\gamma^3(\cdot,\nu)},
\end{displaymath}
since $\tau=-\nu^\perp$.
Combining this relation with \eqref{eq:nugamma}, \eqref{eq:riemkappa1}, 
\eqref{eq:kappag} and the fact that
$\gamma^2(\cdot,\nu)=G^{-1} \nu \cdot\nu = (\det G^{-1}) G \tau \cdot\tau$,
we finally obtain that 
\begin{equation*} 
\varkappa_\gamma \circ x
= \frac{\det G^{-1}}{\gamma^3(\cdot,\nu)} \bigl(\varkappa 
+ \Gamma^m_{kl} \tau_k \tau_l \nu_m \bigr) 
= \frac{1}{\gamma(\cdot,\nu)} 
\frac{\varkappa + \Gamma^m_{kl} \tau_k \tau_l \nu_m}{G \tau \cdot \tau} 
= \varkappa_g \circ \tilde x = (\varkappa_g \circ F) \circ x
\quad \text{in } I,
\end{equation*}
as claimed.
\end{proof}

A family of curves $(\tilde\Gamma(t))_{t \in [0,T]}$ in $\mathcal M$,
is said to evolve by geodesic curvature flow if
\begin{equation} \label{eq:gcsf}
\mathcal V_g = \varkappa_g
\quad \text{on } \tilde\Gamma(t),
\end{equation}
where $\mathcal V_g$ is the normal velocity in the direction of the unit 
normal $\nu_g$ from definition \eqref{eq:varkappag}, i.e.\
$\mathcal V_g = g(\tilde x_t \circ \tilde x^{-1}, \nu_g)$
with $\tilde x : I\times[0,T] \to \Omega$
being a parameterization of $(\tilde\Gamma(t))_{t \in [0,T]}$.

\begin{lemma} \label{lem:appB}
Let $(\Gamma(t))_{t \in [0,T]}$ be a smooth family of curves in $\Omega$.
Then anisotropic curve shortening flow for $(\Gamma(t))_{t \in [0,T]}$ in
$\Omega$, \eqref{eq:acsf}, is equivalent to geodesic curvature flow
for $(F(\Gamma(t)))_{t \in [0,T]}$ in $\mathcal M$, 
\eqref{eq:gcsf}.
\end{lemma}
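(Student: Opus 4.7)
The plan is to reduce the equivalence to matching the two ingredients of the flows, the curvature and the normal velocity. By Lemma~\ref{lem:appB0} the anisotropic curvature $\varkappa_\gamma$ at $z \in \Gamma$ already equals the geodesic curvature $\varkappa_g$ at $F(z) \in \tilde\Gamma$, so only the normal velocities remain to be identified.

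To this end I would parameterize $\Gamma(t) = x(I,t)$ and set $\tilde x = F \circ x$, so that $\tilde\Gamma(t) = \tilde x(I,t)$. Using the basis $\{\partial_1,\partial_2\}$ of $T_{F(x)}\mathcal M$ induced by $F$, one has $\tilde x_t = x_{i,t}\,\partial_i$, while the proof of Lemma~\ref{lem:appB0} already provides the expression $\nu_g = \frac{1}{\gamma(x,\nu)}\,g^{ij}(x)\,\nu_j\,\partial_i$ for the unit normal on $\tilde\Gamma$ associated with the choice of $\nu$ on $\Gamma$. The key computation is then the one-line identity
\begin{equation*}
\mathcal V_g = g_{\tilde x}(\tilde x_t,\nu_g)
= \frac{1}{\gamma(x,\nu)}\,g_{kl}(x)\,g^{il}(x)\,\nu_i\,x_{k,t}
= \frac{1}{\gamma(x,\nu)}\,x_t\cdot\nu
= \mathcal V_\gamma,
\end{equation*}
where the last equality uses $\mathcal V = x_t \cdot \nu$ and the definition $\mathcal V_\gamma = \frac{1}{\gamma(\cdot,\nu)}\mathcal V$ from Section~\ref{sec:prelim}.

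Combining this with Lemma~\ref{lem:appB0}, the pointwise identity $\mathcal V_\gamma = \varkappa_\gamma$ on $\Gamma(t)$ is equivalent to $\mathcal V_g = \varkappa_g$ on $\tilde\Gamma(t) = F(\Gamma(t))$, which is exactly the asserted equivalence of the two flows. I do not expect genuine obstacles: the bookkeeping with $g_{kl}g^{il} = \delta^i_k$ is routine, and the only subtle point is the consistent choice of orientation, namely that the normal $\nu_g$ used in the definition \eqref{eq:varkappag} of geodesic curvature is the one pushed forward from the Euclidean normal $\nu$ via the metric $g$, a convention that is compatible with both the definition of $\varkappa_\gamma$ in Lemma~\ref{lem:firstvar} and the formula for $\mathcal V_g$ above.
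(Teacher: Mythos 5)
Your proposal is correct and follows essentially the same route as the paper: invoke Lemma~\ref{lem:appB0} to identify $\varkappa_\gamma$ with $\varkappa_g$, and then verify $\mathcal V_g = g_{\tilde x}(\tilde x_t,\nu_g) = \frac{1}{\gamma(x,\nu)}\,x_t\cdot\nu = \mathcal V_\gamma$ by contracting $g_{kl}g^{il}=\delta^i_k$, with the same pushed-forward normal $\nu_g$ used consistently in both lemmas.
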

\begin{proof}
Similarly to the proof of Lemma~\ref{lem:appB0}, we assume that
$(\Gamma(t))_{t \in [0,T]}$ is parameterized by
$x : I \times [0,T] \to \Omega$, so that $\tilde x = F \circ x$ parameterizes
$(F(\Gamma(t)))_{t \in [0,T]}$. Let
$\nu_g= \frac{1}{\gamma(x,\nu)} g^{ij}(x) \nu_j \partial_i$.
Then it follows from $\tilde x_t = x_{k,t} \partial_k$ that
\begin{align} \label{eq:VgVgamma}
(\mathcal V_g \circ F ) \circ x & = 
\mathcal V_g \circ \tilde x = 
g_{\tilde x}(\tilde x_t, \tilde \nu) = 
\frac{1}{\gamma(x,\nu)} g^{ij}(x) \nu_j x_{k,t} \, 
g_{\tilde x}(\partial_k,\partial_i)
= \frac{1}{\gamma(x,\nu)} g^{ij}(x) g_{ki}(x) \nu_j x_{k,t} \nonumber \\ &
= \frac{1}{\gamma(x,\nu)} x_t \cdot \nu 
= \mathcal V_\gamma \circ x \quad \text{in } I \times (0,T].
\end{align}
Combining \eqref{eq:VgVgamma} and Lemma~\ref{lem:appB0} yields the desired
result.
\end{proof}

\end{appendix}

\def\soft#1{\leavevmode\setbox0=\hbox{h}\dimen7=\ht0\advance \dimen7
  by-1ex\relax\if t#1\relax\rlap{\raise.6\dimen7
  \hbox{\kern.3ex\char'47}}#1\relax\else\if T#1\relax
  \rlap{\raise.5\dimen7\hbox{\kern1.3ex\char'47}}#1\relax \else\if
  d#1\relax\rlap{\raise.5\dimen7\hbox{\kern.9ex \char'47}}#1\relax\else\if
  D#1\relax\rlap{\raise.5\dimen7 \hbox{\kern1.4ex\char'47}}#1\relax\else\if
  l#1\relax \rlap{\raise.5\dimen7\hbox{\kern.4ex\char'47}}#1\relax \else\if
  L#1\relax\rlap{\raise.5\dimen7\hbox{\kern.7ex
  \char'47}}#1\relax\else\message{accent \string\soft \space #1 not
  defined!}#1\relax\fi\fi\fi\fi\fi\fi}

\end{document}